\documentclass[11pt,oneside]{amsart}
\usepackage[pagebackref,breaklinks,unicode]{hyperref} 
\title{Largest acylindrical actions of free-by-cyclic groups}

\author{Monika Kudlinska}
\address{DPMMS, Centre for Mathematical Sciences, Wilberforce Road, Cambridge,
CB3 0WB, UK, and Emmanuel College, St Andrew's Street, Cambridge CB2 3AP, UK}
\email{m.kudlinska@dpmms.cam.ac.uk}

\author{Harry Petyt}
\address{Mathematical Institute, University of Oxford, UK}
\email{harrypetyt@gmail.com}

\usepackage{amsmath, amssymb, amsthm} 
\usepackage[english]{babel} 
\usepackage[font=small,justification=centering]{caption,subcaption} 
\usepackage[nodayofweek]{datetime}
\usepackage{enumitem} \setlist{nosep} 
\usepackage[T1]{fontenc} 
\usepackage[a4paper]{geometry} 
\usepackage[utf8]{inputenc} 
    \usepackage{csquotes} 
\usepackage{ifthen} 
\usepackage{mathabx} 
\usepackage{mathtools} 
\usepackage[dvipsnames]{xcolor} 
\usepackage{setspace} 
\usepackage{textcomp} 
\usepackage{tikz-cd} 
\usepackage[capitalize]{cleveref}
\usepackage{xfrac}
\usepackage{MnSymbol}

\makeatletter
\newcommand*{\acts}{\mathrel{%
    \begin{tikzpicture}[x=\f@size pt, y=\f@size pt]
        \draw[->] (0,0) arc[start angle=180, x radius=.4, y radius=.6, end angle=0];
    \end{tikzpicture}%
}}
\makeatother

\geometry{margin=3cm}
\let\OLDthebibliography\thebibliography \renewcommand\thebibliography[1]{   
    \OLDthebibliography{#1}\setlength{\parskip}{0pt}\setlength{\itemsep}{0pt plus 0.3ex}} 

\makeatletter\def\subsection{\@startsection{subsection}{1}\z@{.7\linespacing\@plus\linespacing}
    {.5\linespacing}{\normalfont\scshape\centering}}\makeatother 

\renewcommand*{\backrefalt}[4]{\ifcase #1 (Not cited).\or (Cited p.~#2).\else (Cited pp.~#2).\fi} 

\newcounter{shcount}
\newcounter{thmcount}
\newcounter{enumlabelcount}
\newcounter{claimcount}

 

\newcommand*{\numberedtheorem}[3]{\theoremstyle{plain}\newtheorem*{makethm\thethmcount}{#1}
    \ifthenelse{\equal{#2}{}}{\begin{makethm\thethmcount}#3\end{makethm\thethmcount}\stepcounter{thmcount}}
    {\begin{makethm\thethmcount}[#2]#3\end{makethm\thethmcount}\stepcounter{thmcount}}} 

\makeatletter\newcommand\enumlabel[1][]{\item[#1]
    \refstepcounter{enumlabelcount}\def\@currentlabel{#1}}\makeatother

\newenvironment{claim}{\refstepcounter{claimcount}\medskip\noindent
    \textbf{Claim \theclaimcount:}\hspace{0.5mm}}{}
\newenvironment{claim*}{\medskip\noindent\textbf{Claim:}\hspace{0.5mm}}{}
\newenvironment{claimproof}{\medskip\noindent\emph{Proof of Claim \theclaimcount.}\hspace{0.5mm}}
    {\leavevmode\unskip\penalty9999\hbox{}\nobreak\hfill\quad\hbox{$\diamondsuit$}\medskip}
\newenvironment{claim*proof}{\medskip\noindent\emph{Proof of Claim.}\hspace{0.5mm}}
    {\leavevmode\unskip\penalty9999\hbox{}\nobreak\hfill\quad\hbox{$\diamondsuit$}\medskip}

\newcommand*{\AH}{\ensuremath{\mathcal{AH}}}

\newcommand*{\eps}{\varepsilon}

\newcommand*{\R}{\mathbf{R}}

\newcommand*{\Z}{\mathbf{Z}}

\newcommand*{\cal}{\mathcal}

\newcommand{\caly}{\mathcal{Y}}
\newcommand{\calx}{\mathcal{X}}
\newcommand{\calh}{\mathcal{H}}

\newcommand*{\ssm}{\smallsetminus}

\DeclareMathOperator{\Aut}{Aut}
\DeclareMathOperator{\Cay}{Cay}
\DeclareMathOperator{\diam}{diam}
\DeclareMathOperator{\dist}{\mathsf{d}}
\DeclareMathOperator{\GL}{GL}

\DeclareMathOperator{\Out}{Out}

\newcommand{\ignore}[2]{\left\{\kern-.7ex\left\{#1\right\}\kern-.7ex\right\}_{#2}}

\newcommand*{\sgen}[1]{\langle#1\rangle}


\definecolor{harrycomment}{rgb}{0.6,0,0.4}

\definecolor{monikacomment}{rgb}{0.2,0,0.8}

\newtheorem{question}{Question}
\swapnumbers
\newtheorem{theorem}{Theorem}[section]
\newtheorem{lemma}[theorem]{Lemma} 
\newtheorem{corollary}[theorem]{Corollary}
\newtheorem{proposition}[theorem]{Proposition}
\theoremstyle{definition}
\newtheorem{definition}[theorem]{Definition}
\newtheorem{remark}[theorem]{Remark}

\begin{document}

\begin{abstract}

We show that every finitely generated free-by-cyclic group $G$ admits a largest acylindrical action on a hyperbolic space $X$ obtained by coning off maximal product subgroups of $G$. We characterise Morse geodesics of $G$ as those that project to quasigeodesics in $X$, thus showing that all finitely generated free-by-cyclic groups are Morse local-to-global. We also characterise the stable and strongly quasiconvex subgroups of $G$. Finally, we compute the Morse boundary for \{finitely generated free\}-by-cyclic groups with unipotent and polynomially growing monodromy.
\end{abstract}
\maketitle

\section{Introduction}

A guiding principle in geometric group theory is to study groups through their actions on non-positively curved spaces. This approach has seen particular success in the setting of \emph{Gromov hyperbolic groups}; that is, groups that act properly and cocompactly on a Gromov hyperbolic space $X$. In that case, the Cayley graph of the group with respect to any finite generating set is quasi-isometric to $X$, and thus the geometry of the group is fully reflected in the hyperbolic structure of $X$. 

Hyperbolicity places strong restrictions on the geometry and algebra of a group. In particular, the Cayley graph of a Gromov hyperbolic group with respect to a finite generating set does not admit quasiisometrically embedded product regions (when we say a space or group \emph{is a product}, we mean it is a direct product of two unbounded spaces or groups). 

In some cases, existence of product regions is the only obstruction to hyperbolicity: after ``coning off'' an equivariant collection of product regions in the Cayley graph $\Gamma = \mathrm{Cay}(G; S)$, one obtains a Gromov hyperbolic space $\hat{\Gamma}$. While the resulting action of $G$ on $\hat{\Gamma}$ fails to be proper, establishing that it is non-elementary and \emph{acylindrical} (a weakening of properness) is sufficient to recover many interesting properties of the group \cite{dahmaniguirardelosin:hyperbolically,osin:acylindrically}. In particular, such an action will necessarily give rise to a ``largest'' (in the sense of \cite{abbottbalasubramanyaosin:hyperbolic}) and thus canonical acylindrical action, resulting in the group being $\mathcal{AH}$-accessible (see \cref{defn:accessibility}). 

Examples of this phenomenon include the mapping class groups, where the corresponding largest acylindrical action is on the curve graph \cite{bowditch:tight}, right-angled Artin groups, where the largest action is on the extension graph \cite{kimkoberda:embedability,kimkoberda:geometry}, and more generally all hierarchically hyperbolic groups \cite{abbottbehrstockdurham:largest}.

Moreover, the space $\hat{\Gamma}$ can often be used to detect subgroups of $G$ that satisfy various generalisations of quasiconvexity. For instance, in all of the above cases, a subgroup $H \leq G$ is \emph{stable} in the sense of \cite{durhamtaylor:convex}, if and only if it quasiisometrically embeds in $\hat{\Gamma}$. In the case of mapping class groups, stable subgroups are exactly the convex cocompact subgroups \cite{farbmosher:convex,kentleininger:shadows,hamenstadt:word}, whilst in right-angled Artin groups a subgroup is stable if and only if it is purely loxodromic \cite{koberdamangahastaylor:geometry,kimkoberda:geometry}.

In this paper, we show that precisely the same phenomenon occurs in the class of finitely generated \emph{free-by-cyclic} groups. A group $G$ is \emph{free-by-cyclic} if it fits into a short exact sequence $1\to F\to G\to\Z\to 1$ with $F$ a free group. If $F$ can be taken to be finitely generated, then we say $G$ is \emph{\{f.g.\ free\}-by-cyclic}. The \emph{monodromy map} is the corresponding action of $\Z$ on $F$. We prove the following: 

\begin{theorem} \label{thm:intro}
Let $G$ be a free-by-cyclic group with a finite generating set $S$, and let $\mathcal{P}$ be the collection of maximal product subgroups. The Cayley graph $X = \mathrm{Cay}(G; S \cup \mathcal{P})$ is hyperbolic, and it is unbounded if $G$ is not virtually a product. The regular action of $G$ on $X$ is the largest acylindrical action of $G$. A quasigeodesic of $G$ is Morse if and only if it is a quasigeodesic in $X$.

Moreover, if $G$ is \{f.g.\ free\}-by-cyclic with polynomially-growing monodromy, then $X = \mathrm{Cay}(G; S \cup \mathcal{P})$ is a quasitree. 
\end{theorem}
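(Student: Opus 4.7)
The strategy is to establish enough structure on the family $\mathcal{P}$ of maximal product subgroups to guarantee that coning them off produces a hyperbolic and acylindrical $G$-space, and then to deduce the remaining conclusions via standard machinery. The first step is to describe $\mathcal{P}$ explicitly. When $F$ is finitely generated, one decomposes the monodromy $\phi$ into exponentially and polynomially growing strata via Bestvina-Feighn-Handel relative train tracks; the polynomial strata give rise to subgroups containing all of the products (and in particular to subgroups of the form $\sgen{w,t^k}$ coming from $\phi$-periodic conjugacy classes $[w]$), while exponential strata contribute none. When $F$ is infinitely generated but $G$ is still finitely generated, a parallel analysis via mapping tori of injective endomorphisms of free groups applies.

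Next, I would show that $G$ is hyperbolic relative to $\mathcal{P}$, combining Brinkmann's theorem in the atoroidal case with results of Macura, Gautero-Lustig, and Dahmani in the polynomially growing case, glued together by an appropriate combination theorem to handle mixed monodromies. Hyperbolicity and unboundedness of $X$, together with acylindricity of the action on $X$, then follow from Osin's foundational framework and almost malnormality of the peripherals. For the largest-action statement, I would apply the Abbott-Balasubramanya-Osin criterion: an acylindrical action on a hyperbolic space is largest if and only if every generalised loxodromic element acts loxodromically. Since elements of product subgroups are elliptic in every acylindrical action on any hyperbolic space, no generalised loxodromic element lies in $\mathcal{P}$, so all of them act loxodromically on $X$. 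The Morse characterization then follows from the standard relatively hyperbolic statement due to Sisto: Morse geodesics are precisely those with bounded peripheral excursions, equivalently those projecting to quasigeodesics in $X$.

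The hardest step, I expect, is the quasitree claim in the polynomially-growing case, which is strictly stronger than hyperbolicity of $X$. The plan here is to exploit the fact that a polynomially growing \{f.g.\ free\}-by-cyclic group is built by a finite tower of $\Z$-extensions, each level introducing new product structure supported on the previous one. After coning off all of $\mathcal{P}$, only the topmost tree-like structure should survive. One way to make this precise is via Manning's bottleneck criterion, checking directly that pairs of points in $X$ are connected through a bottleneck provided by the hierarchical structure. An alternative is to construct a natural family of $G$-equivariant projections onto the product subgroups satisfying the Bestvina-Bromberg-Fujiwara axioms and then appeal to their projection complex construction, from which the quasitree conclusion falls out automatically.
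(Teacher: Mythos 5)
Your plan has a genuine gap at its core: the claim that $G$ is hyperbolic relative to the collection $\mathcal{P}$ of maximal product subgroups is false whenever the monodromy has nontrivial polynomial growth. A \{f.g.\ free\}-by-cyclic group with polynomially growing monodromy of degree $\ge 1$ is \emph{thick} (Hagen), and thick groups admit no proper relatively hyperbolic structure (Behrstock--Dru\c{t}u--Mosher). Concretely, already in the linear case the splitting of Proposition~\ref{prop:linear-splitting} has vertex groups that are products and $\Z^2$ edge groups, so adjacent vertex groups share an infinite intersection; the putative peripherals are nowhere near malnormal. Everything downstream of that false claim --- hyperbolicity of $X$ via Osin, acylindricity from ``almost malnormality of the peripherals'', and in particular the Morse characterization via Sisto's distance formula \emph{relative to $\mathcal{P}$} --- would therefore collapse exactly in the case (PG monodromy) where the hard work lies. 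Note also that the statement you actually want for the Morse/largest-action steps is relative hyperbolicity with respect to the maximal \{f.g.\ free\}-by-cyclic subgroups with PG monodromy (as in Proposition~\ref{prop:rel-hyp}), not with respect to the product subgroups themselves; these are very different collections, and the former contains the latter but consists of non-hyperbolic, non-product subgroups.

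The paper's route avoids this entirely: it constructs a candidate quasitree $X(G)$ directly, by induction on the degree of polynomial growth, as a tree of quasitrees over the Bass--Serre tree of the topmost edge splitting, with base cases the line, the point, and the Bass--Serre tree of the $\Z^2$-splitting. Quasitree-ness is then checked with Manning's bottleneck criterion using a combinatorial ``$\bar X_v$'' thickening, and acylindricity is proved by hand using the $4$- and $2$-acylindricity of the underlying tree actions (Lemmas~\ref{lemma:tree-axes-intersections}--\ref{lemma:loxodromic-elliptic-axes-intersction}). Only \emph{after} that is it identified with $\mathrm{Cay}(G; S\cup\mathcal{P})$ (Lemma~\ref{lem:cayley}), PG is deduced from UPG via a finite-index argument (Lemma~\ref{lem:finite-index-bootstrap}), and finally the general case is bootstrapped using relative hyperbolicity over the PG peripherals together with Theorem~7.9 of Abbott--Balasubramanya--Osin. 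The largest-action claim uses the fact that maximal product subgroups have bounded orbits in every acylindrical action on a hyperbolic space (their Proposition~4.13), which is close in spirit to the generalised-loxodromic criterion you cite but is applied to the coned-off Cayley graph directly rather than requiring an independent proof that $X$ is hyperbolic. Your suggestion of a BBF projection-complex argument is a plausible alternative for the quasitree step and is not what the paper does, but by itself it cannot rescue the missing hyperbolicity/acylindricity arguments for $\mathrm{Cay}(G; S\cup\mathcal{P})$, and the inductive structure of the monodromy has to enter somewhere --- the relative-hyperbolicity shortcut you propose does not exist.
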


By now much is known about the geometry of finitely generated free-by-cyclic groups \cite{brinkmann:hyperbolic,bridsongroves:quadratic,hagenwise:cubulating:general, bongiovannighoshgultepehagen:characterizing}. In particular, they are hyperbolic relative to their collection of maximal \{f.g.\ free\}-by-cyclic subgroups with polynomially-growing monodromies \cite{ghosh:relative, linton:geometry}.  Since the latter are \emph{thick} \cite{hagen:remark} and thus do not admit proper relatively hyperbolic structures \cite{behrstockdrutumosher:thick}, such a relatively hyperbolic structure is maximal (and hence also canonical). 

It follows that many aspects of the geometry of a finitely generated free-by-cyclic group can be understood in terms of its free-by-cyclic subgroups with polynomially-growing monodromy. \cref{thm:intro} extends this by establishing a canonical maximal \emph{acylindrically hyperbolic} structure, which allows a further reduction to subgroups with \emph{linearly}-growing monodromy. We exploit this idea in the proof of \cref{thm:intro:stable} below. 

Our result helps explain the recently observed phenomenon that obstructions to non-positive curvature in free-by-cyclic groups are often at the level of linear growth \cite{munropetyt:obstructions,bongiovannighoshgultepehagen:characterizing}. 
Note that it was already known that finitely generated free-by-cyclic groups are acylindrically hyperbolic by combining \cite{genevoishorbez:acylindrical} with \cite{linton:geometry}, though our arguments do not rely on \cite{genevoishorbez:acylindrical}.

\medskip

We obtain several corollaries of our construction. First, the statement about Morse quasigeodesics in \cref{thm:intro} implies that $X$ is a \emph{Morse detectability space} in the sense of \cite{russellsprianotran:local}. In particular, we answer a question of Russell--Spriano--Tran \cite[Question~2]{russellsprianotran:local}:

\begin{corollary} \label{cor:intro}
Every finitely generated free-by-cyclic group is Morse local-to-global.
\end{corollary}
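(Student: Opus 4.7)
The plan is to deduce the corollary directly from the Morse characterisation established in \cref{thm:intro} by invoking the Russell--Spriano--Tran framework. Their notion of a \emph{Morse detectability space} for a finitely generated group $G$ is (roughly) an equivariantly quasi-isometric copy of a Cayley graph, equipped with a geometric gadget, such that a quasigeodesic of $G$ is Morse exactly when its image satisfies a hyperbolic-type condition (being a quasigeodesic in a hyperbolic coned-off space). The whole point of \cref{thm:intro} is precisely to produce this structure for free-by-cyclic groups: $X=\Cay(G;S\cup\calp)$ is hyperbolic by the first part of the theorem, and the final clause of \cref{thm:intro} identifies Morse quasigeodesics in $G$ with quasigeodesics in $X$. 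So the first step is to verify, by checking the definition in \cite{russellsprianotran:local}, that $(G,X)$ fits the Morse detectability template.

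Once this verification is complete, the second step is simply to invoke the main result of \cite{russellsprianotran:local} (the theorem that Morse detectability implies Morse local-to-global). This gives \cref{cor:intro} essentially for free, and it is exactly their Question~2 that is being answered, since free-by-cyclic groups were explicitly flagged there as a natural test case.

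The main obstacle is entirely the verification step: one must match the hypotheses required by \cite{russellsprianotran:local} with what \cref{thm:intro} supplies. Concretely, this means checking that (i) $G$ acts on $X$ by isometries (which is automatic, as $X$ is a Cayley graph with respect to a $G$-invariant generating set), (ii) the action is cobounded (again automatic), (iii) $X$ is hyperbolic (the first clause of \cref{thm:intro}), and (iv) the Morse-to-quasigeodesic equivalence holds with uniform constants depending only on the Morse gauge (this is how the Morse characterisation in \cref{thm:intro} should be read, and one should make sure the proof of that clause actually gives the uniform version). Provided these uniformity issues are handled in the proof of \cref{thm:intro}, the corollary is an immediate application, and no further free-by-cyclic--specific input is required.
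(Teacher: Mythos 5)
Your proposal is correct and is essentially the paper's argument: the paper verifies that the equivalence of items~\eqref{it:1} and~\eqref{it:3} in \cref{lemma:Morse-detectable}, together with the uniform dependence of constants, is precisely the definition of Morse detectability in \cite[Definition~4.17]{russellsprianotran:local}, and then applies \cite[Theorem~4.18]{russellsprianotran:local}. The uniformity issue you flag is handled in the ``Moreover'' clause of \cref{lemma:Morse-detectable}, which is the full version of the Morse characterisation announced in \cref{thm:intro}.
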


As in the case of mapping class groups and right-angled Artin groups, the space $X$ allows us to characterise subgroups of finitely generated free-by-cyclic groups that satisfy various notions of quasiconvexity.

A finitely generated subgroup $H$ of a finitely generated group $G$ is \emph{stable} if it is undistorted and every quasigeodesic in $G$ with endpoints in $H$ is Morse \cite{durhamtaylor:convex}. Following \cite{tran:onstrongly}, a subgroup $H$ of a finitely generated group $G$ is said to be \emph{strongly quasiconvex} (sometimes also called \emph{Morse} in the literature \cite{genevois:hyperbolicities}) if there exists some $M = M(S) >0$ such that every geodesic in $\mathrm{Cay}(G;S)$ joining two points in $H$ is contained in the $M$-neighbourhood of $H$.

\begin{theorem} \label{thm:intro:stable}
Let $G$ be a finitely generated free-by-cyclic group. Let $\cal P$ be the collection of maximal product subgroups of $G$ and let $X$ be the hyperbolic space from \cref{thm:intro}. For each finitely generated $H<G$, the following are equivalent.
\begin{enumerate}
\item   $H$ is stable in $G$.
\item   $H$ intersects each subgroup in $\cal P$ trivially.
\item   The orbit map of $H$ on $X$ is a quasiisometric embedding.
\end{enumerate}
Moreover, if $G$ if \{f.g.\ free\}-by-cyclic with polynomially growing monodromy and $H$ has infinite index in $G$, then $H$ is strongly quasiconvex if and only if $H$ is stable.
\end{theorem}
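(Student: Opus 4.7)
The overall plan is to establish the equivalences through the cycle $(3)\Rightarrow(1)\Rightarrow(2)\Rightarrow(3)$. The equivalence $(1)\Leftrightarrow(3)$ should follow from \cref{thm:intro} applied to the $H$-word-metric geodesic $\gamma_H$ between two points of $H$: undistortion of $H$ (automatic from (1), and from (3) via $d_X\le d_G$) makes $\gamma_H$ a quasigeodesic in $G$, and \cref{thm:intro} equates Morseness of $\gamma_H$ in $G$ with the image of $\gamma_H$ being a quasigeodesic in $X$, which in turn is equivalent to the orbit map $H\to X$ being a quasiisometric embedding; with routine manipulations this extends to arbitrary quasigeodesics in $G$ with endpoints in $H$. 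For $(1)\Rightarrow(2)$: any nontrivial $g\in H\cap P$ with $P=A\times B\in\cal P$ commutes with some element of $P$ independent from $\langle g\rangle$ (using torsion-freeness of $G$ and unboundedness of $A,B$), placing $g$ inside a $\Z^2$ subgroup of $G$; the resulting flat strip shows that the cyclic subgroup $\langle g\rangle\le H$ fails to be Morse in $G$, contradicting stability.

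The main technical step is $(2)\Rightarrow(3)$. The plan is to invoke the relatively hyperbolic structure on $G$ with peripheral subgroups the collection $\cal Q$ of maximal \{f.g.\ free\}-by-cyclic subgroups with polynomially-growing monodromy \cite{ghosh:relative,linton:geometry}. One first verifies that the trivial-intersection hypothesis descends to each peripheral $Q\in\cal Q$, in the sense that $H\cap Q$ has trivial intersection with every maximal product subgroup of $Q$. In the polynomial-growth case, the coned-off Cayley graph is a quasitree (by the second part of \cref{thm:intro}), and the explicit structure of maximal product subgroups there, together perhaps with an induction on polynomial rank, allows one to show directly that $H\cap Q$ embeds quasiisometrically into this quasitree. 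Finally, the peripheral embeddings combine to the global QI embedding via a combinatorial argument in the Bowditch coned-off graph together with standard results on relatively quasiconvex subgroups of relatively hyperbolic groups.

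For the second statement, the direction stable $\Rightarrow$ strongly quasiconvex is general \cite{tran:onstrongly}, so the content is the converse. By the equivalence $(1)\Leftrightarrow(2)$ established above, it suffices to show that an infinite-index strongly quasiconvex $H$ in a polynomial-growth \{f.g.\ free\}-by-cyclic group satisfies $H\cap P=\{1\}$ for every $P\in\cal P$. If $h\in H\cap P$ were nontrivial, then for every $z$ commuting with $h$ the path $h^n\to zh^n\to zh^m\to h^m$ is a $(1,2|z|_G)$-quasigeodesic with endpoints in $H$, so strong quasiconvexity yields a uniform bound on $d_G(zh^n,H)=d_G(z,H)$ in terms of $|z|_G$. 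In the polynomial-growth setting the centraliser of $h$ is large (containing a full factor of the product region), and this boundedness together with an iteration through the nested product structure should force $H$ to contain a finite-index subgroup of $P$, and ultimately to have finite index in $G$, contradicting the infinite-index hypothesis.

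The principal obstacle will be the polynomial-growth step of $(2)\Rightarrow(3)$: extracting from the trivial-intersection hypothesis the rigidity needed to quasiisometrically embed $H\cap Q$ into the coned-off quasitree. Since quasitrees are extremely rigid, this should be feasible once one pins down which subgroups can QI-embed into them; the induction on polynomial rank together with the explicit product structure of polynomial-growth free-by-cyclic groups is the natural pathway.
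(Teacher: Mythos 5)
Your outline for the equivalences $(1)\Leftrightarrow(3)$ and $(1)\Rightarrow(2)$ matches the paper in spirit, but there are two genuine gaps.

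First, for $(2)\Rightarrow(3)$: you acknowledge this is the ``principal obstacle'' and propose to descend to peripheral subgroups of a relatively hyperbolic structure and then QI-embed $H\cap Q$ into a peripheral quasitree directly, by some induction on polynomial rank. The paper avoids this entirely by proving a stronger, local result --- Proposition~\ref{lemma:Morse-detectable} --- which establishes, for an individual quasigeodesic $\gamma$ of $\Cay(G;S)$, the three-way equivalence of: Morseness, uniformly bounded intersections with cosets of $\cal P$, and the image $\hat\iota\circ\gamma$ being a quasigeodesic of $X$. With that in hand, the argument for subgroups is short: Lemma~\ref{lem:detectability_universal} gives $(1)\Leftrightarrow(3)$, $(1)\Rightarrow(2)$ uses undistortion of product subgroups, and $(2)\Rightarrow(1)$ is the contrapositive --- if $H$ fails stability one produces quasigeodesics with endpoints in $H$ that are bad for every Morse gauge $M_n$, which by the detectability proposition must have larger and larger, hence eventually nontrivial, intersections with cosets of $\cal P$. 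Your route requires reconstructing substantial parts of that proposition inside the relatively hyperbolic machinery; the missing technical work you name is exactly the work the paper does inductively in the proof of Proposition~\ref{lemma:Morse-detectable}, and you have not supplied it.

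Second, the finish of your strong-quasiconvexity argument does not work. From strong quasiconvexity plus $H\cap P\ne\{1\}$ you deduce that $H$ must contain a finite-index subgroup of $P$; that step is plausible (it is close to the paper's Claim). But the next step --- ``and ultimately to have finite index in $G$'' --- is not justified and in general is false: having a finite-index subgroup of $P$ does not propagate to finite index in $G$ unless $P$ itself has finite index. The paper instead passes to a finite-index UPG subgroup $G'$, then to a linearly-growing subgroup $G_0\le G'$ containing $P$, and shows by conjugating $P$ along the Bass--Serre tree of the linear splitting that $|H_0^g\cap P|=\infty$ for every $g\in G_0$; since $H_0^g\cap P$ is then finite index in $P$, finitely many conjugates of $H_0$ have infinite common intersection, directly contradicting Tran's theorem that infinite-index strongly quasiconvex subgroups have finite height. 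You need this height argument (or an equivalent) to close the contradiction; the finite-index conclusion you aim for is the wrong target.
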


In particular, we observe that $X$ is a \emph{universal recognising space} for $G$ in the sense of \cite{balasubramanyachesserkerrmangahastrin:non}; see Section~\ref{sec:stability}.

Combined with \cite[Corollary~6.8]{tran:onstrongly}, the statement in Theorem~\ref{thm:intro:stable} about strongly quasiconvex subgroups answers a question of I.~Kapovich \cite[Question~1.6]{aimpl:rigidity}; see \cref{cor:general-Morse-subgp}.

\medskip

Our final application of \cref{thm:intro} is to the study of Morse boundaries of \{f.g.\ free\}-by-cyclic groups with polynomially growing monodromies. An \emph{$\omega$-Cantor} space is roughly the direct limit of countably many Cantor spaces. We show:

\begin{theorem} \label{thm:intrp:boundary}
Let $G$ be a \{f.g.\ free\}-by-cyclic group with polynomially growing monodromy. The Morse boundary of $G$ is an $\omega$-Cantor space. 
\end{theorem}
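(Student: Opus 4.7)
The plan is to write $\partial_M G$ as a direct limit of Cantor sets. Recall that $\partial_M G = \varinjlim_N \partial^N G$, where $\partial^N G$ is the compact metrizable stratum of asymptote classes of $N$-Morse quasigeodesic rays. The goal is that along a cofinal countable family of gauges $N$, the stratum $\partial^N G$ is nonempty, totally disconnected and perfect, hence homeomorphic to the Cantor set.

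By \cref{thm:intro}, in the polynomially-growing case $X = \mathrm{Cay}(G;S\cup\calp)$ is a quasitree, so $\partial X$ is compact, metrizable and totally disconnected. \cref{thm:intro} further characterises Morse quasigeodesics in $G$ as $X$-quasigeodesics, so sending an $N$-Morse ray to its $X$-endpoint defines a $G$-equivariant map $\iota_N \colon \partial^N G \to \partial X$. This map is continuous, because convergence in $\partial^N G$ forces fellow-travelling in $G$, which by \cref{thm:intro} forces fellow-travelling in $X$; and it is injective, because two non-asymptotic Morse rays in $G$ whose $X$-quasigeodesic images shared an $X$-endpoint would $X$-fellow-travel and hence, by the Morse property, $G$-fellow-travel. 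Since $\partial^N G$ is compact Hausdorff and $\partial X$ is Hausdorff, $\iota_N$ is a homeomorphism onto its image, so each $\partial^N G$ inherits total disconnectedness from $\partial X$.

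For perfectness we may assume $G$ is not virtually a product, so $X$ is unbounded and $G\acts X$ is non-elementary and acylindrical. Fix any loxodromic $g \in G$; its axis is a Morse line in $G$ of some gauge $N_0$, since it quasi-isometrically embeds into $X$. For every $h \in G$, the translate $h\cdot\mathrm{axis}(g)$ is a Morse line with the same gauge $N_0$ and with $X$-endpoints $h\cdot g^{\pm}$. Given $[\gamma]\in\partial^N G$ with $\iota_N([\gamma])=\xi$, I would produce a sequence $h_n\in G$ with $h_n\cdot g^+\to\xi$ in $\partial X$; then the corresponding $N_0$-Morse endpoints $[h_n\cdot \mathrm{axis}(g)]_+$ lie in $\partial^{\max(N,N_0)}G$ and, by the homeomorphism-onto-image property of $\iota_{\max(N,N_0)}$, converge to $[\gamma]$. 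This yields perfectness of $\partial^N G$ for every $N\ge N_0$. Brouwer's characterisation then identifies each such $\partial^N G$ with the Cantor set, and the directed union of a cofinal countable sequence is, by definition, an $\omega$-Cantor space.

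The main obstacle is the production of the sequence $(h_n)$ with $h_n\cdot g^+\to\xi$, which amounts to the density of the orbit $\{h\cdot g^+:h\in G\}$ in $\partial X$. This in turn comes down to the fact that, for a non-elementary acylindrical action on a hyperbolic space, the limit set is the full boundary and loxodromic fixed points are dense in it. All the ingredients are standard in the acylindrical hyperbolicity framework of \cite{dahmaniguirardelosin:hyperbolically,osin:acylindrically}, so I expect this to be a technical verification rather than a serious obstruction; a minor subtlety to handle is ensuring the approximating points $[h_n\cdot\mathrm{axis}(g)]_+$ can be chosen distinct from $[\gamma]$, which is automatic since only one of the countably many conjugate fixed points can equal $\xi$.
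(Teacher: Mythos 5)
Your approach is genuinely different from the paper's and it has a real gap. The paper does not attempt to show that the individual strata $\partial^N G$ are Cantor sets. Instead it verifies three abstract conditions: that $\partial_M G$ is $\sigma$-compact (via Morse local-to-global, \cref{cor:MLTG}, combined with the theorem of He--Spriano--Zbinden \cite{hesprianozbinden:sigma}); that it is totally disconnected (via the embedding into $\partial_M X$ with $X$ a quasitree, from \cref{lemma:Morse-detectable}); and that it contains a Cantor subspace (via a stable quasiisometrically embedded free subgroup, using \cref{prop:stable-subgroups}). It then cites the Charney--Cordes--Sisto criterion \cite[Theorem~1.4]{charneycordessisto:complete}, which says that for a non--virtually-free group these three conditions already force the Morse boundary to be an $\omega$-Cantor space.

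The gap in your argument is the final step. You conclude that ``the directed union of a cofinal countable sequence is, by definition, an $\omega$-Cantor space'', but that is not the definition the paper uses. An $\omega$-Cantor space is a direct limit of Cantor sets $X_1 \subset X_2 \subset \cdots$ with the additional requirement that the interior of $X_i$ inside $X_{i+1}$ is empty. Your proposal never touches this proper-nesting condition, and without it the conclusion is simply false: if the strata stabilised along a cofinal subsequence (as they do for a hyperbolic group), your argument would prove that $\partial_M G$ is a Cantor set, which is not an $\omega$-Cantor space. Showing that the strata are strictly and non-openly nested along a cofinal sequence is precisely the content the Charney--Cordes--Sisto criterion packages (using the non--virtually-free hypothesis), and it is the reason the paper goes through that criterion rather than arguing stratum-by-stratum as you do. A secondary concern: you take for granted that each stratum $\partial^N G$ is compact and metrizable; the fact that the paper needs the Morse local-to-global property plus \cite{hesprianozbinden:sigma} just to obtain $\sigma$-compactness of the whole boundary suggests this is not something to assume for free.
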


\subsection{Further questions}

\cref{thm:intro} characterises the Morse geodesics of a finitely generated free-by-cyclic groups as those that project to quasigeodesics in a particular hyperbolic space $X$. It is known that for each geodesic space $Y$ there is a hyperbolic space $Z$ that characterises the \emph{strongly contracting} geodesics of $Y$ in a similar way \cite{petytzalloum:constructing,zbinden:hyperbolic}. It is natural to ask whether the spaces $X$ and $Z$ are the same for finitely generated free-by-cyclic groups. In particular:

\begin{question}
Are Morse geodesics in finitely generated free-by-cyclic groups strongly contracting in some fixed Cayley graph?
\end{question}

In \cite{bongiovannighoshgultepehagen:characterizing}, it is shown that a finitely generated free-by-cyclic group is \emph{hierarchically hyperbolic} provided it satisfies a certain condition on the intersections of its product subgroups. Hierarchically hyperbolic groups always admit largest acylindrical actions on hyperbolic spaces \cite{abbottbehrstockdurham:largest}, so \cref{thm:intro} and \cref{cor:intro} show that some features of hierarchical hyperbolicity are manifest even in the absence of the property itself. That said, our methods are considerably more direct than going via hierarchical hyperbolicity. Nevertheless, this leads to the following question.

\begin{question}
Is every finitely generated free-by-cyclic group hierarchically hyperbolic relative to its \{f.g.\ free\}-by-cyclic subgroups with linearly growing monodromies?
\end{question}

Since every finitely generated free-by-cyclic group is hyperbolic relative to its \{f.g.\ free\}-by-cyclic subgroups with polynomially growing monodromy, \cref{thm:intrp:boundary} raises the following.

\begin{question}
What is the Morse boundary of a general finitely generated free-by-cyclic group? 
\end{question}

\subsection*{Acknowledgments}

The first author thanks Alice Kerr, Hoang Thanh Nguyen and Stefanie Zbinden for interesting discussions. The second author thanks the Isaac Newton Institute for Mathematical Sciences for support and hospitality during the program \textit{Operators, Graphs, and Groups}, where this project started; EPSRC grant number EP/Z000580/1.

\section{Preliminaries}

\subsection{Bass--Serre theory}

For the purposes of this article, we define a \emph{graph} $X$ to be a 1-dimensional CW-complex with the set of $0$-cells denoted by $V(X)$ and the set of $1$-cells denoted by $E(X)$. An \emph{orientation} on $X$ is a map $ E(X) \to V(X)$ that sends each edge $e \in E(X)$ to one of its endpoints $e^{-}$, which we declare to be the \emph{initial endpoint}. We let $e^{+}$ be the other endpoint of $e$, which we call the \emph{terminal endpoint}. A \emph{tree} is a connected graph with no cycles.

A \emph{graph of groups} $\mathbb{G}$ consists of an oriented connected graph $X$ and an assignment of a group $G_v$ to each vertex $v \in V(X)$ and a group $G_e$ to each edge $e \in E(X)$. Furthermore, for each edge $e \in E(X)$, there is a pair of monomorphisms 
\[\partial_{e}^{+} \colon G_e \to G_{e^{+}} \text{ and } \partial_{e}^{-} \colon G_e \to G_{e^{-}}.\] 

The \emph{Bass group} associated to $\mathbb{G}$ is
\[\mathrm{Bass}(\mathbb{G}) = \left. \bigast_{v\in V(X)} G_v \ast F(t_e)_{e \in E(X)} \middle/ \llangle t_{e}^{-1} \partial_{e}^{-}(g) t_e = \partial_e^{+}(g) \, \forall g \in G_e  \forall e \in E(X)\rrangle \right..\]

Given a choice of a spanning tree $\mathcal{T}$ of $X$, the \emph{fundamental group} of the graph of groups $\pi_1(\mathbb{G}, \mathcal{T})$ is defined to be 
\[\pi_1(\mathbb{G}, \mathcal{T}) = \left.\mathrm{Bass}(\mathbb{G}) \middle/ \llangle t_e \in E(\mathcal{T}) \rrangle \right. .\]

The \emph{universal cover} relative to the fundamental group of the graph of groups $\pi_1(\mathbb{G}, \mathcal{T})$ is a tree $T$ where the vertices and edges are labelled by cosets of vertex and edge groups, 
\[ V(T) = \bigsqcup_{v\in V(X)} \pi_1(\mathbb{G}, \mathcal{T}) / G_v \text{ and }E(T) =  \bigsqcup_{e\in E(X)} \pi_1(\mathbb{G}, \mathcal{T}) / G_e,\]
and adjacency is determined by inclusion. We call $T$ the \emph{Bass--Serre} tree associated to the graph of groups $\pi_1(\mathbb{G}, \mathcal{T})$. There is a natural left action of $G = \pi_1(\mathbb{G}, \mathcal{T})$ on $T$ determined by the action of $G$ on the left cosets of the vertex and edge groups. 

We say that a group $G$ \emph{admits a graph-of-groups splitting} if there is an isomorphism $G \cong \pi_1(\mathbb{G}, \mathcal{T})$ for some graph of groups $\mathbb{G}$ and maximal spanning tree $\mathcal{T}$. We will sometimes omit the spanning tree from the notation, if it is not relevant.

\subsection{Dynamics of outer automorphisms}

Let $F$ be a free group of finite rank and fix a free basis $S$ of $F$. For any $g \in F$, let $\|\bar{g} \|_S$ denote the word length of a shortest representative in the conjugacy class $\bar{g}$ of $g$. Let $\Phi\in\Out(F)$. We say that $\bar{g}$ \emph{grows polynomially of degree $d \in \mathbb{N}$} under the iterations of $\Phi$ if there exist constants $A, B > 0$ such that for all $n \in \mathbb{N}$,
\[ A n^d - A \leq \| \Phi^n(\bar{g}) \|_S \leq B n^d + B.\]

An outer automorphism $\Phi \in \Out(F)$ \emph{grows polynomially of degree at most $d$} if every $\bar{g}$ grows polynomially of degree at most $d$. We say that $\Phi$ is \emph{unipotent} if it induces a unipotent element of $\GL(\operatorname{rank}F ;\Z)$ in the natural quotient given by abelianising $F$. We will abbreviate (unipotent and) polynomially growing to \emph{(U)PG}. 

\medskip

Let $\Phi \in \Out(F)$ be an outer automorphism. A \emph{topological representative} of $\Phi$ is a pair $(f, \Gamma)$ where $\Gamma$ is a connected oriented graph with $\pi_1(\Gamma) \cong F$ and $f \colon \Gamma \to \Gamma$ is a homotopy equivalence that determines $\Phi$. Moreover, we have that $f(V(\Gamma)) \subseteq V(\Gamma)$ and $f$ is locally injective on the interiors of edges. 

Let $(f, \Gamma)$ be a topological representative with a maximal filtration
\[ \emptyset = \Gamma_0 \subseteq \Gamma_1 \subseteq  \ldots \subseteq \Gamma_n = \Gamma\]
and suppose that the following conditions are satisfied for each index $i$: 
\begin{enumerate}
    \item $f(\Gamma_i) \subseteq \Gamma_i$; 
    \item $\Gamma_{i+1}$ is the union of $\Gamma_i$ and an edge $e_{i+1}$; 
    \item \label{it:loop} $f(e_i) = e_i \rho_i$ where $\rho_i$ is an immersed loop in $\Gamma_{i - 1}$, called the \emph{suffix} of $f(e_i)$; 
    \item for any two distinct edges $e_i$ and $e_j$, if the corresponding suffixes $\rho_i$ and $\rho_j$ are non-trivial then $\rho_i \neq \rho_j$. 
\end{enumerate}
\smallskip 

Note that by \cref{it:loop}, every vertex in $\Gamma$ is fixed by $f$.

A \emph{Nielsen path} for $(f, \Gamma)$ is a path $\alpha$ in $\Gamma$ such that $f(\alpha)$ is homotopic to $\alpha$ relative endpoints. A \emph{unipotent representative} \cite[Definition~3.13]{bestvinafeighnhandel:tits:2} is a topological representative $(f, \Gamma)$ as above, with extra conditions that allow for greater control of Nielsen paths. We will omit the definition as it will not be relevant to this article. Instead, we note the following:

\begin{proposition}[{\cite[Theorem~5.1.8]{bestvinafeighnhandel:tits:1}}]
    If $\Phi \in \mathrm{Out}(F)$ is UPG then it admits a unipotent representative.
\end{proposition}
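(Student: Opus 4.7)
The plan is to mimic the Bestvina--Feighn--Handel construction by starting from an arbitrary relative train track representative of $\Phi$ and successively improving it to satisfy the listed conditions. I would take any relative train track map $(f,\Gamma)$ representing $\Phi$, with maximal filtration $\emptyset = \Gamma_0 \subset \cdots \subset \Gamma_n = \Gamma$. Polynomial growth of $\Phi$ rules out exponential strata: if some stratum had a Perron--Frobenius transition eigenvalue $\lambda > 1$, then conjugacy classes of loops crossing that stratum would grow like $\lambda^n$, contradicting PG. After refining so that each non-exponential stratum is a single topological edge, conditions (1) and (2) of the definition are met.

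To establish condition (3), I would exploit unipotence. In the basis of edges compatible with the filtration, the induced action of $\Phi$ on $H_1(\Gamma;\Z)$ is upper unitriangular, which forces $f$ to send each top-stratum edge $e_i$ to itself concatenated with a word entirely in $\Gamma_{i-1}$. The requirement that no prefix appears in front of $e_i$ can be arranged by subdividing edges so that each edge's initial vertex lies in the fixed set of $f$; unipotence guarantees the permutation $f$ induces on vertices is trivial after suitable subdivision. The resulting suffix $\rho_i$ has matching endpoints by construction, and can be homotoped rel endpoints to an immersed loop in $\Gamma_{i-1}$.

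The main obstacle is condition (4), that distinct edges produce distinct non-trivial suffixes. The key observation is that if two edges $e_i$ and $e_j$ at the same filtration level share a non-trivial suffix $\rho$, then $e_i\bar{e}_j$ is a Nielsen path, and one can fold along it to reduce the number of edges at that level. The delicate point is that the fold must be compatible with the filtration and must preserve the $f(e_k) = e_k \rho_k$ form of the map for every remaining edge. I would proceed by induction, starting at the highest filtration level where the condition fails and working downward, verifying at each step that properties (1)--(3) are preserved and that the complexity --- say the pair (filtration length, number of edges) --- strictly decreases in lexicographic order. Because the folds only modify edges at or above the level at which duplication occurs, the termination argument is local enough that the bookkeeping stays manageable.

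Finally, one checks the additional Nielsen-path bookkeeping conditions in Bestvina--Feighn--Handel's full definition of a unipotent representative, which follow from the unipotence hypothesis once (1)--(4) are in place. The crux of the argument, and the part where the most care is needed, is the interaction between the folding in the last step and the filtration-preserving form of $f$ --- everything else is essentially an accounting exercise once polynomial growth and unipotence are converted into their effects on a relative train track.
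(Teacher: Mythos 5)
The paper does not prove this statement: it is stated with a citation to Bestvina--Feighn--Handel \cite[Theorem~5.1.8]{bestvinafeighnhandel:tits:1} and used as a black box, and the authors even explicitly decline to reproduce the full definition of a unipotent representative. So there is no proof in the paper to compare against, and a blind attempt to reprove BFH's theorem is a substantially different (and much harder) task than what the paper is doing.

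As a précis of the BFH strategy your outline is reasonable, but it contains steps that do not quite work as stated. First, unipotence of the induced map on $H_1(\Gamma;\Z)$ is not the same as the filtration matrices being upper unitriangular: the edge set of $\Gamma$ is not a basis of $H_1$, and a filtration by subgraphs does not automatically respect a homology flag, so the inference from ``unipotent on $H_1$'' to ``$f(e_i)=e_i\rho_i$ with $\rho_i\subset\Gamma_{i-1}$'' is a gap. What one actually needs is the relative train track structure with unipotent transition matrices on the strata, together with further modifications, and this is not a one-line consequence of unipotence on abelianization. Second, the argument for condition (4) -- that sharing a non-trivial suffix produces a Nielsen path $e_i\bar e_j$ along which one can fold while preserving (1)--(3) and the filtration -- is exactly the delicate part of BFH's construction, and dismissing the compatibility checks as bookkeeping underestimates it: folds can create new illegal turns, merge strata, and break the maximality of the filtration, and the termination argument is genuinely intricate. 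Finally, you correctly note that a unipotent representative in BFH's sense has additional Nielsen-path conditions beyond (1)--(4), but the claim that these ``follow from unipotence once (1)--(4) are in place'' is not established and is where much of BFH's Section~5 lives. In short, the outline is directionally correct but the proof would need to import nearly all of the technical content of BFH to be complete -- which is precisely why the paper cites the result rather than proving it.
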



\subsection{Free-by-cyclic groups}

A group $G$ is \emph{free-by-cyclic} if it contains a normal subgroup $F \trianglelefteq G$ that is free and such that $G / F \cong \Z$. Moreover, $G$ is \emph{\{f.g.\ free\}-by-cyclic} if there exists a finitely generated such $F$. After picking a lift of a generator of $\Z$ in $G$, we may express $G$ as an (inner) semidirect product $G  = F \rtimes_{\phi} \Z$ where $\phi \in \Aut(F)$. We call $\phi$ the \emph{monodromy} of the splitting. Note that any two lifts of the generator of $\mathbb{Z}$ correspond to monodromies that differ by an inner automorphism of $F$. Conversely, for any two elements $\phi, \psi \in \Aut(F)$ that differ by an inner automorphism, there is an isomorphism $F \rtimes_{\phi} \Z \to F' \rtimes_{\psi} \Z$ mapping $F \mapsto F'$. Thus, we will also refer the the outer class $[\phi] \in \Out(F)$ as the monodromy. 


\begin{proposition}\label{prop:rel-hyp}
Every finitely generated free-by-cyclic group is hyperbolic relative to a collection $\mathcal{H}$ of maximal \{f.g.\ free\}-by-cyclic subgroups with PG monodromy.  
\end{proposition}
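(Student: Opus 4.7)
The plan is to combine the two cited results of Ghosh and Linton, dividing into the two cases according to whether the free kernel can be taken to be finitely generated.

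First suppose $G$ is \{f.g.\ free\}-by-cyclic, so we may write $G = F \rtimes_\phi \Z$ with $F$ finitely generated and $\phi \in \Aut(F)$. I would invoke Ghosh's theorem, which produces a relatively hyperbolic structure on $G$ whose peripheral subgroups are the mapping tori of the (conjugacy classes of) maximal polynomially growing subgroups of $F$ in an improved relative train track representative of $\phi$. Each such peripheral is itself \{f.g.\ free\}-by-cyclic with PG monodromy. To get maximality, I would take the collection $\calh$ to be all $G$-conjugates of the maximal \{f.g.\ free\}-by-cyclic subgroups with PG monodromy that contain one of Ghosh's peripherals; since PG \{f.g.\ free\}-by-cyclic groups are thick (by Hagen's remark cited above), any such enlargement still gives a relatively hyperbolic structure, and by thickness these maximal PG subgroups admit no proper relatively hyperbolic structure, so the collection is well-defined and canonical.

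Second, suppose $G$ is finitely generated free-by-cyclic but the free kernel cannot be taken finitely generated. Here I would invoke Linton's theorem, which shows that any such $G$ is word hyperbolic (so the statement of the proposition holds with $\calh = \emptyset$, or equivalently, the whole group is its own trivial peripheral after noting that a hyperbolic group is hyperbolic relative to any malnormal quasiconvex collection including the empty one). Alternatively, Linton's structural result gives a graph-of-groups decomposition with \{f.g.\ free\}-by-cyclic vertex groups, and one can piece together the relatively hyperbolic structures on the vertex groups via a Dahmani/Bowditch combination theorem, but in the hyperbolic case this is unnecessary.

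The main subtle point is verifying that the peripheral collection coming out of Ghosh's theorem can be replaced by the collection of \emph{maximal} PG \{f.g.\ free\}-by-cyclic subgroups without destroying relative hyperbolicity or introducing redundancy. I would handle this by appealing to the fact that the peripheral structure of a relatively hyperbolic group is unique once one insists on maximal thick (equivalently, non-relatively-hyperbolic) peripherals, which is exactly the Behrstock--Druţu--Mosher result cited in the introduction. With this, the collection $\calh$ is canonically determined by $G$.
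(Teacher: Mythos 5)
Your Case 1 (the $\{$f.g.\ free$\}$-by-cyclic case) is sound and matches half of the paper's argument: Ghosh (or Dahmani--Li) gives a relatively hyperbolic structure with $\{$f.g.\ free$\}$-by-cyclic PG peripherals. The problem is your Case 2. You assert that Linton's theorem shows $G$ is word hyperbolic whenever the free kernel cannot be taken finitely generated, but that is not what the cited result says. Linton's Theorem~6.1 states, uniformly for \emph{every} finitely generated free-by-cyclic group, that $G$ is hyperbolic relative to a collection of $\{$f.g.\ free$\}$-by-cyclic subgroups --- a statement that is strictly weaker than absolute hyperbolicity, and there is no indication (nor do I know of a reason to believe) that this peripheral collection is empty exactly when the fibre is forced to be infinitely generated. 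So the claim on which Case 2 rests is unjustified, and the ``alternatively'' you offer via a Dahmani/Bowditch combination theorem, while closer in spirit, is set up to be discarded on the same basis.

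The paper's proof avoids a case split entirely and is a clean two-step composition using transitivity of relative hyperbolicity: first apply Linton's theorem to \emph{any} finitely generated free-by-cyclic $G$ to get a relatively hyperbolic structure whose peripherals are $\{$f.g.\ free$\}$-by-cyclic; then apply the Ghosh/Dahmani--Li result to each of those peripherals to refine them further to $\{$f.g.\ free$\}$-by-cyclic subgroups with PG monodromy. Passing to maximal such peripherals is then routine. This is both shorter and does not require the unsupported hyperbolicity claim. If you want to salvage your outline, replace Case~2's hyperbolicity assertion with Linton's actual relative hyperbolicity statement and compose with Case~1 applied to each peripheral --- which then collapses your argument to the paper's.
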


\begin{proof}
Every finitely generated free-by-cyclic group is hyperbolic relative to a collection of \{f.g.\ free\}-by-cyclic subgroups by \cite[Theorem~6.1]{linton:geometry}. Moreover, every \{f.g.\ free\}-by-cyclic group is hyperbolic relative to a  collection of \{f.g.\ free\}-by-cyclic subgroups with polynomially growing monodromies \cite{ghosh:relative, dahmanili:relative}. Combining these yields the claim.
\end{proof}

 \subsection{Splittings of UPG free-by-cyclic groups}\label{sec:splittings}

 Throughout this subsection we will assume that $F$ is a finitely generated free group. We begin by recalling the splitting of \{f.g.\ free\}-by-cyclic groups with linearly growing monodromies, which was constructed independently by Andrew--Martino in \cite{andrewmartino:free-by-cyclic} and Dahmani--Touikan in \cite{dahmanitouikan:unipotent}, who moreover showed that the splitting is acylindrical \cite[Proposition~3.2]{dahmanitouikan:unipotent}.

\begin{proposition}\label{prop:linear-splitting}
    Let $\phi \in \Aut(F)$ be a representative of a unipotent and linearly growing outer automorphism. Then $G = F\rtimes_{\phi} \Z$ admits a graph-of-groups splitting $G \cong \pi_1(\mathbb{G})$ where the underlying graph $(X, V_0(X), V_1(X))$ is bipartite, and such that the vertex groups $G_u$ for $u \in V_0(X)$ are non-abelian and maximal product subgroups, the vertex groups $G_v$ for $v \in V_1(X)$ are maximal $\Z^2$-subgroups, the edge groups are $\Z^2$-subgroups and all the edge inclusions to vertex groups in $V_1(X)$ are onto. Moreover, the splitting is 4-acylindrical.
\end{proposition}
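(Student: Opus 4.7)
The plan is to construct the splitting directly from a unipotent train-track representative of $\phi$, following the strategies of Andrew--Martino \cite{andrewmartino:free-by-cyclic} and Dahmani--Touikan \cite{dahmanitouikan:unipotent}. Using the preceding proposition, one starts with a unipotent representative $(f,\Gamma)$ of $\phi$. Because $\phi$ is linearly growing, each edge $e_i$ satisfies $f(e_i)=e_i\rho_i$, where $\rho_i$ is either trivial or a Nielsen loop -- an immersed loop in $\Gamma_{i-1}$ fixed by $f$ up to homotopy rel endpoints. Each such Nielsen loop $\rho$ corresponds to an element of $F$ genuinely fixed by $\phi$, so in $G=F\rtimes_\phi\Z$ the subgroup $\langle\rho,t\rangle$ is a copy of $\Z^2$, with $t$ denoting the stable letter.

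Next I would identify the candidate vertex groups. For a Nielsen loop $\rho$ that appears as a suffix, collect all linear edges $e_{i_1},\dots,e_{i_k}$ with $\rho_{i_j}=\rho$. The relations $te_{i_j}t^{-1}=e_{i_j}\rho$ force $[t,e_{i_j}e_{i_{j'}}^{-1}]=1$ for all pairs, so the subgroup generated by $\rho$, the differences $e_{i_j}e_{i_1}^{-1}$, and $t$ splits as a direct product. Together with the remaining non-linear structure of $\Gamma$ attached to the block, this produces a non-abelian product subgroup $P_\rho\leq G$. These $P_\rho$ are to serve as the vertex groups $G_u$ with $u\in V_0$, while each $\langle\rho,t\rangle$ becomes a $G_v$ with $v\in V_1$, adjacent to $P_\rho$ via an edge whose edge group is $\langle\rho,t\rangle$ itself. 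Bipartiteness and the onto condition for edge inclusions into $V_1$-vertices are then immediate from the construction.

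The hard part is verifying maximality of the vertex groups and the $4$-acylindricity of the action on the Bass--Serre tree $T$. For maximality of $P_\rho$, one shows that any strictly larger product subgroup would force the existence of an element of $\Fix(\phi^n)$ not already visible in the train-track structure -- which the maximal filtration of $\Gamma$ forbids; maximality of $\langle\rho,t\rangle$ follows from the fact that centralisers in $G$ of infinite-order elements of $F$ are cyclic-by-cyclic. For $4$-acylindricity, following \cite{dahmanitouikan:unipotent}, the core observation is that two distinct conjugate copies of an edge group $\langle\rho,t\rangle$ intersect in at most a cyclic subgroup generated by a power of $t$ or of a Nielsen loop, and that such cyclic intersections cannot survive along a path of length $5$ in $T$; the bipartite structure then forces any path with nontrivial pointwise stabiliser to have length at most $4$. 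The acylindricity step is the main obstacle, since it requires controlling exactly how Nielsen loops and powers of $t$ interact under conjugation by elements of $G$.
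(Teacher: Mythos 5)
The paper does not actually prove \cref{prop:linear-splitting} — it cites it. Immediately before the statement, the authors write that the splitting ``was constructed independently by Andrew--Martino in \cite{andrewmartino:free-by-cyclic} and Dahmani--Touikan in \cite{dahmanitouikan:unipotent}, who moreover showed that the splitting is acylindrical \cite[Proposition~3.2]{dahmanitouikan:unipotent}.'' So there is no in-paper argument to compare against; you are being asked to supply a proof where the paper supplies a reference.

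That said, your reconstruction has a concrete gap. You propose to build each product vertex group $P_\rho$ by ``collect[ing] all linear edges $e_{i_1},\dots,e_{i_k}$ with $\rho_{i_j}=\rho$'' and then using the relations $te_{i_j}t^{-1}=e_{i_j}\rho$ to conclude $[t,e_{i_j}e_{i_{j'}}^{-1}]=1$. But condition (4) in the paper's definition of a unipotent representative states that distinct edges have \emph{distinct} non-trivial suffixes, so the set of linear edges sharing a common non-trivial suffix $\rho$ has at most one element, and the commutator computation never applies. More to the point, the computation itself is the symptom of the problem: $t(e_ie_j^{-1})t^{-1}=e_i\rho_i\rho_j^{-1}e_j^{-1}$, which equals $e_ie_j^{-1}$ only when $\rho_i=\rho_j$, which is exactly what condition (4) forbids. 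In the actual constructions of \cite{andrewmartino:free-by-cyclic,dahmanitouikan:unipotent}, the non-abelian product vertex groups come not from sets of linear edges with a common suffix but from the \emph{fixed subgraph} of $f$ (the union of edges with trivial suffix): the mapping torus of $f$ restricted to a component $C$ of the fixed subgraph has fundamental group $\pi_1(C)\times\Z$, and after passing to a maximal such piece this produces the $V_0$ vertex groups. Each linear edge $e_i$ then contributes an edge of the graph of groups whose edge group $\langle\rho_i,t\rangle\cong\Z^2$ attaches to the component of the fixed subgraph containing $\rho_i$; the $V_1$ vertices are the maximal $\Z^2$-subgroups (centralisers of primitive roots of the $\rho_i$'s) inserted along these edges, which is what makes the underlying graph bipartite and the edge-inclusions into $V_1$-groups onto. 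Your high-level picture (Nielsen loops give $\Z^2$'s; acylindricity controlled via \cite{dahmanitouikan:unipotent}) is sound, but the mechanism you give for producing the non-abelian product pieces is the wrong one. Given that the paper treats this as a citation, the cleanest fix for your write-up is to do the same, or else to redo the vertex-group construction starting from the fixed subgraph rather than from shared suffixes.
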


Suppose now that $\phi \in \Aut(F)$ represents a unipotent outer automorphism with polynomial growth of degree $d>1$. Then $G = F \rtimes_{\phi} \Z$ admits a splitting over maximal cyclic subgroups arising from the topmost edges of the unipotent representative of the outer class $\Phi = [\phi]$ (see e.g. \cite{ macura:detour, hagen:remark, andrewhugheskudlinska:torsion}). We briefly recall the construction below, as it is relevant for this article.

\smallskip

Let $(f, \Gamma)$ be a unipotent representative. The \emph{mapping torus} of $f$ is the quotient of $\Gamma \times [0,1]$ by the equivalence relation $(x,0) \sim (f(x),1)$ for all $x \in \Gamma$. Since $f$ fixes the vertices of $\Gamma$, the fibre over any vertex $v$ becomes a closed loop in $M_f$, which we call the \emph{vertical loop} at $v$.

Identifying $\Gamma$ with the equivalence class of $\Gamma \times \{0\}$ in $M_f$, we choose a basepoint $x_0 \in V(\Gamma)$ and identify $G$ with $\pi_1(M_f, x_0)$. Let $F \leq G$ be the subgroup corresponding to $\pi_1(\Gamma, x_0)$ and let $t \in G$ be the class of the vertical loop at $x_0$. This yields the semidirect product structure $G = F \rtimes_{\phi} \langle t \rangle$.

Now let $ \mathcal{E}_d = \{e_{i_1}, \ldots, e_{i_l}\}$ be the edges of $\Gamma$ that grow polynomially of degree $d$ under the iteration of $f$. Let $\{\Gamma^{(j)}\}_{j\in J}$ be the components of $\Gamma \setminus \bigcup_{e \in \mathcal{E}_d} \mathrm{int}(e)$. Then $f$ preserves each component $\Gamma^{(j)}$ and the suffix of $f(e_{i_k})$ is entirely contained in the component that contains the terminal vertex $t(e_{i_k})$ of $e_{i_k}$. 

We obtain a graph-of-groups splitting $G \cong \pi_1(\mathbb{G})$ by collapsing each $\Gamma^{(j)}$ to a point, and taking the vertex groups to be the fundamental groups of mapping tori of $f$ restricted to each $\Gamma^{(j)}$. Hence, each vertex group $G_v$ is of the form 
\[ G_v = F_v \rtimes_{\phi_v} \langle t_v \rangle,\]
where $F_v$ is identified with a (possibly trivial) free factor of $F$ and $t_v$ is identified with an element of $ F \cdot t$. For each topmost edge $e \in \mathcal{E}_d$, there is an edge group $G_e = \langle t_e \rangle$ where the image of $t_e$ in $G$ is contained in the coset $F \cdot t$. We call the splitting $G \cong \pi_1(\mathbb{G})$ the \emph{topmost edge splitting}. It was shown in \cite[Lemma~5.2]{kudlinskavaliunas:free} that the splitting is 2-acylindrical.


Note that for each vertex $v$, either $G_v$ is infinite cyclic or $F_v \neq 1$ and the automorphism $\phi_v \in \Aut(F_v)$ represents a UPG element of $\Out(F_v)$ with growth of degree $d' \leq d-1$. If $d' > 1$, then $G_v$ itself admits a topmost edge splitting. We call the resulting hierarchy the \emph{cyclic hierarchy} associated to topmost edge splittings. If $d' = 1$ then $G_v$ admits an acylindrical graph of groups splitting coming from \cref{prop:linear-splitting}.

Let $e \in \mathcal{E}_d$ and let $v = e^{+}$. Then $f(e) = e \rho$ for some immersed path $\rho$, and thus the edge inclusion $\partial_e^{+} \colon G_e \to G_{v}$ is such that $\partial_e^{+}(t_e) = t_{v} g_{\rho}^{-1}$ where the conjugacy class of $g_{\rho} \in F_v$ grows polynomially of degree $d-1$ under the outer class of $\phi_v$. It follows that $t_{v} g_{\rho}^{-1}$ acts loxodromically on the Bass--Serre tree of $G_{v}$ (see \cite[Remark~4.6]{bongiovannighoshgultepehagen:characterizing}).





\smallskip

We end this section with the following useful fact. 

\begin{lemma}\label{lemma:common-powers}
    Let $G = F \rtimes_{\phi} \langle t \rangle$ be a \{f.g.\ free\}-by-cyclic group with UPG monodromy (not necessarily $\phi$). Let $g,h \in F \cdot t$ be distinct elements. Then $g$ and $h$ do not have a common non-trivial power. 
\end{lemma}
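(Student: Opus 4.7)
The plan is to reduce to a multiplicative identity inside $F$ and then peel off the lower central series. A putative common non-trivial power $g^n = h^m$ projects to $\Z = G/F$ to give $n = m$, and torsion-freeness of $G$ together with $g, h \notin F$ lets me assume $n \geq 1$. Writing $g = f_1 t$, $h = f_2 t$ and setting $c := gh^{-1} = f_1 f_2^{-1} \in F$, which is non-trivial because $g \neq h$, a direct expansion of $g^n = (ch)^n = h^n$ yields
\[c \cdot \tau(c) \cdot \tau^2(c) \cdots \tau^{n-1}(c) = 1 \quad \text{in } F,\]
where $\tau \colon F \to F$ is conjugation by $h$; explicitly $\tau = \iota_{f_2} \circ \phi$. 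The outer class $[\tau]$ equals $[\phi]$, so is still UPG, and this is the only property of $\tau$ I will use. The target reduces to showing $c = 1$.

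\medskip

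I would deduce this from residual nilpotence of $F$. Let $\{\gamma_k F\}_{k \geq 1}$ denote the lower central series, for which $\bigcap_k \gamma_k F = \{1\}$; it suffices to prove by induction on $k$ that $c \in \gamma_k F$. The base case is immediate. For the inductive step, assume $c \in \gamma_k F$ and reduce the displayed identity modulo $\gamma_{k+1} F$. Since $\gamma_k F / \gamma_{k+1} F$ lies in the centre of $F / \gamma_{k+1} F$ and every factor $\tau^i(c)$ belongs to $\gamma_k F$, the product collapses to an additive identity in the abelian group $\gamma_k F / \gamma_{k+1} F$, namely
\[\sum_{i=0}^{n-1} \tau_k^i(\bar c) = 0,\]
where $\tau_k$ is the operator on $\gamma_k F / \gamma_{k+1} F$ induced by $\tau$ and $\bar c$ is the class of $c$. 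Injectivity of $\sum_{i=0}^{n-1} \tau_k^i$ would then give $\bar c = 0$, i.e., $c \in \gamma_{k+1} F$, closing the induction.

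\medskip

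The main step, and the one that actually invokes the UPG hypothesis, is this injectivity. By definition $\tau$ acts unipotently on $A := F^{\mathrm{ab}}$; by the Magnus--Witt theorem, $\gamma_k F / \gamma_{k+1} F$ identifies with the degree-$k$ component of the free Lie algebra on $A$, a torsion-free $\Z$-module on which $\tau_k$ still acts unipotently (unipotence is preserved by tensor powers and subquotients). Writing $\tau_k = I + N_k$ with $N_k$ nilpotent, a hockey-stick computation gives
\[\sum_{i=0}^{n-1} \tau_k^i = nI + \binom{n}{2} N_k + \binom{n}{3} N_k^2 + \cdots,\]
which in a basis adapted to $N_k$ is upper-triangular with every diagonal entry equal to $n$, hence has non-zero determinant. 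I expect this is the main difficulty only in that the UPG hypothesis is essential: without it the eigenvalues of $\tau_k$ need not all be $1$, and the sum $\sum \tau_k^i$ could fail to be injective even for $n \neq 0$.
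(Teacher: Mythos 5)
Your argument is correct, but it takes a genuinely different route from the paper's. The paper observes that a common power must be of the form $g^n = h^n$ with $n\ne 0$ (by projecting to $\Z = G/F$), and then simply cites the unique roots property of \{f.g.\ free\}-by-cyclic groups with UPG monodromy, namely Lemma~2.14 of the Dahmani--Hughes--Kudlinska--Touikan conjugacy paper, to conclude $g = h$. You instead give a self-contained proof of (the relevant special case of) that unique-roots statement: you translate $g^n = h^n$ into the twisted-cocycle identity $c\cdot\tau(c)\cdots\tau^{n-1}(c)=1$ with $c = gh^{-1}\in F\setminus\{1\}$ and $\tau = \iota_{f_2}\circ\phi$, then run an induction through the lower central series $\{\gamma_k F\}$, using residual nilpotence of $F$ together with the fact that unipotence on $F^{\mathrm{ab}}$ propagates (via Magnus--Witt and the tensor/Lie-functor description of $\gamma_k F/\gamma_{k+1}F$) to unipotence of the induced operator $\tau_k$, so that $\sum_{i=0}^{n-1}\tau_k^i = nI + \binom{n}{2}N_k + \cdots$ is injective on each free $\Z$-module $\gamma_k F/\gamma_{k+1}F$. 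All the steps check out: the projection to $\Z$ forcing equal exponents, the passage from a central product to a sum in $\gamma_kF/\gamma_{k+1}F$, the hockey-stick identity, and the observation that only unipotence on the abelianisation (hence a property of the outer class, unaffected by replacing $\phi$ with $\iota_{f_2}\circ\phi$) is used. The trade-off is the usual one: the paper's proof is shorter because it leans on an external result, whereas yours is longer but self-contained and makes transparent precisely where the UPG hypothesis enters (and why it cannot be dropped).
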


\begin{proof}
    Let $g,h \in F \cdot t$ be distinct. If $g$ and $h$ have a common non-trivial power then it must be the case that $g^n = h^n$ for some $n \in \mathbb{Z} \setminus 0$. However, a free-by-cyclic group with UPG monodromy has the unique roots property by \cite[Lemma~2.14]{dahmanihugheskudlinskatouikan:conjugacy}, and thus $g = h$. This is a contradiction.
\end{proof}

\subsection{Acylindrical actions on trees}

Let $\kappa$ be a non-negative integer. An action of $G$ on a simplicial tree $T$ is \emph{$\kappa$-acylindrical} if the pointwise stabiliser of any edge path in $T$ of length $\geq \kappa+1$ is trivial, where the length of an edge path is the number of edges.

\begin{lemma}\label{lemma:finite-extension-acylindrical}
    Let $G$ be a torsion-free group and $G' \leq_{f.i.} G$ a subgroup of finite index. Suppose that $G$ acts on a tree $T$ and the induced action of $G'$ on $T$ is $\kappa$-acylindrical. Then the action of $G$ on $T$ is $\kappa$-acylindrical.
\end{lemma}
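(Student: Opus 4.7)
The plan is to argue by contradiction, using the fact that in a torsion-free group every non-trivial element has non-trivial powers, so having finite index is enough to push the acylindricality of $G'$ up to $G$.

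Concretely, suppose the action of $G$ on $T$ fails to be $\kappa$-acylindrical. Then there exist an edge path $P$ in $T$ of length at least $\kappa+1$ and a non-trivial element $g \in G$ that pointwise stabilises $P$. I would exploit the hypothesis $[G : G'] < \infty$ to find a positive integer $n$ with $g^n \in G'$; one can simply take $n = [G : G']!$, or more economically the order of the coset $gG'$ in any transversal-based argument.

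Since $G$ is torsion-free and $g \neq 1$, the element $g^n$ is also non-trivial. Moreover, $g^n$ pointwise stabilises $P$ because $g$ does (the pointwise stabiliser of a subset is a subgroup). This contradicts the assumption that the induced action of $G'$ on $T$ is $\kappa$-acylindrical, since $P$ has length at least $\kappa+1$ and its pointwise $G'$-stabiliser contains $g^n \neq 1$. Hence the action of $G$ on $T$ must itself be $\kappa$-acylindrical.

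There is no real obstacle here: the only place the hypotheses are used is (i) finite index, to guarantee a power lands in $G'$, and (ii) torsion-freeness of $G$, to guarantee that power is non-trivial. Both are immediate, so the argument is essentially a one-paragraph contradiction and I would present it as such in the paper.
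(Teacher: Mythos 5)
Your argument is correct and is essentially identical to the paper's own proof: both take a nontrivial $g \in G$ fixing a long edge path, pass to a power $g^n \in G'$ using finite index, invoke torsion-freeness to keep $g^n$ nontrivial, and contradict $\kappa$-acylindricity of $G'$. No differences worth noting.
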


\begin{proof}
    Let $\rho$ be an edge path in $T$ of length $\kappa +1$. Suppose that there exists an element $g \in G \setminus 1$ such that $g$ fixes $\rho$ pointwise. It follows that for each $n \in \mathbb{N}$, $g^n$ fixes $\rho$ pointwise. Let $M \in \mathbb{N}$ be such that $g^M \in G'$. Since $G$ is torsion free, $g^M$ is non-trivial. But this contradicts $\kappa$-acylindricity of the action of $G'$ on $T$.
\end{proof}

Let $G$ be a torsion-free group and suppose that $G$ admits a $\kappa$-acylindrical action on a tree $T$. Let $g,h \in G$ be loxodromic elements for the action on $T$ with axes $\alpha_g$ and $\alpha_h$ and translation lengths $\lambda_T(g)$ and $\lambda_T(h)$, respectively.

\begin{lemma}\label{lemma:tree-axes-intersections}
If $g$ and $h$ do not have a common non-trivial power, then 
\[\mathrm{diam}_T(\alpha_g \cap \alpha_h) \leq \kappa + \lambda_T(g) \cdot \lambda_T(h).\]
\end{lemma}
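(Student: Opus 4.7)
The plan is to argue by contradiction. Suppose that $D = \mathrm{diam}_T(\alpha_g \cap \alpha_h) > \kappa + L$, where $L = \lambda_T(g) \cdot \lambda_T(h)$, and write $I = \alpha_g \cap \alpha_h$. Consider the powers
\[ a = g^{\lambda_T(h)} \quad \text{and} \quad b = h^{\lambda_T(g)}, \]
each of which acts on its axis as a translation of magnitude $L$, and neither of which is trivial because $G$ is torsion-free and $g,h$ are loxodromic. The key idea is that, along the long overlap $I$, both $a$ and $b$ translate by the same amount $L$, so an appropriate product of them fixes a long sub-segment of $I$ pointwise; $\kappa$-acylindricity will then force this product to be trivial, exhibiting a common non-trivial power of $g$ and $h$ and giving the contradiction.

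To implement this, I would orient $I$ and split into two cases according to whether $g$ and $h$ translate in the same or in opposite directions along $I$. In the same-direction case, for every point $p \in I$ whose translate by $L$ along $I$ still lies in $I$, we have $a(p) = b(p)$, and hence $c = b^{-1}a$ fixes $p$. The set of such $p$ is a sub-segment $J \subseteq I$ of length at least $D - L > \kappa$. Since $c$ pointwise fixes $J$, which contains an edge-path of combinatorial length at least $\kappa + 1$, $\kappa$-acylindricity forces $c = 1$, i.e.\ $g^{\lambda_T(h)} = h^{\lambda_T(g)}$, a common non-trivial power -- contradicting the hypothesis. In the opposite-direction case, the same reasoning applied to $c = ba$ yields the analogous identity $g^{\lambda_T(h)} = h^{-\lambda_T(g)}$.

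The only subtle point is to confirm that the fixed sub-segment $J$ really does contain an edge-path of combinatorial length at least $\kappa + 1$, rather than merely having metric diameter $> \kappa$. Because axes of simplicial loxodromic isometries of a simplicial tree are sub-complexes and translation lengths are integers, $D$ and $L$ are both integers, so the strict inequality $D - L > \kappa$ among integers reads $D - L \geq \kappa + 1$, which is exactly what is required to invoke acylindricity. I do not anticipate any further obstacle beyond this bookkeeping.
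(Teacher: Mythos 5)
Your proof is correct and follows essentially the same argument as the paper: exhibit a segment of length $\kappa+1$ on which $g^{\lambda_T(h)}$ and $h^{\pm\lambda_T(g)}$ agree, apply $\kappa$-acylindricity to force their quotient to be trivial, and conclude a common power. The paper compresses your two orientation cases into the phrase ``up to inverting $h$,'' and your final remark on metric versus combinatorial length is a helpful but minor bookkeeping check that the paper leaves implicit.
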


\begin{proof}
Let $I=\alpha_g\cap\alpha_h$. If $\diam I>\lambda_T(g)\lambda_T(h)+\kappa$, then there is a subinterval $J\subset I$ of length $\kappa+1$ such that every vertex $x\in J$ has $g^{\lambda_T(h)}x\in I$. Up to inverting $h$, we also have $h^{\lambda_T(g)}x=g^{\lambda_T(h)}x\in I$ for every $x\in J$, so the element $g'=g^{\lambda_T(h)}h^{-\lambda_T(g)}$ fixes $J$ pointwise. Since the action of $G$ on $T$ is $\kappa$-acylindrical, this implies that $g'=1$, and thus $h^{\lambda_T(g)} = g^{\lambda_T(h)}$. This is a contradiction. 
\end{proof}

The next lemma bounds the coarse intersections of axes for $g$ and $h$ in actions of $G$ that dominate the acylindrical action on $T$. Let $X$ be a graph with an isometric $G$-action. Suppose that there is a 1-Lipschitz $G$-equivariant map $\pi \colon X \to T$. Pick a vertex $x_0 \in \pi^{-1}(\alpha_g)$ such that 
\[
\dist(g\cdot x_0, x_0) \,=\, \mathrm{min}\{\dist(g\cdot x, x) \mid x \in \pi^{-1}(\alpha_g)\} \,=:\, \lambda_X(g).
\] 
Pick $y_0 \in \pi^{-1}(\alpha_h)$. Let $A(g) = \{g^n \cdot x_0\}_{n\in \mathbb{Z}}$ and $A(h) = \{h^n \cdot y_0\}_{n \in \mathbb{Z}}$. 

\begin{lemma}\label{lemma:general-axes-intersections} 
Suppose that $g$ and $h$ are loxodromic for the $\kappa$-acylindrical action of $G$ on $T$ and do not have a common non-trivial power. For each $\epsilon > 0$, there exists a constant $K$ that depends only on $\epsilon$, $\kappa$, and the conjugacy classes of $g$ and $h$, such that
    \[ \mathrm{diam}_X(N_{\epsilon}(A(g)) \cap A(h)) \leq K.\]
\end{lemma}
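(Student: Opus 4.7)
The plan is to project the problem down to the tree $T$ via $\pi$, where we already have a bound from \cref{lemma:tree-axes-intersections}, and then lift this back to $X$ using the linear speed at which the $h$-orbit moves along $\alpha_h$. Since $\pi$ is $G$-equivariant and $\alpha_g$, $\alpha_h$ are $g$- and $h$-invariant, the orbits $A(g)$ and $A(h)$ are contained in $\pi^{-1}(\alpha_g)$ and $\pi^{-1}(\alpha_h)$, respectively, so projecting preserves the essential geometric structure.

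First I would fix two points $p = h^a y_0$ and $q = h^b y_0$ in $N_\epsilon(A(g))\cap A(h)$. Since $p,q\in\pi^{-1}(\alpha_h)$ and $\pi$ is $1$-Lipschitz, the images $\pi(p),\pi(q)$ lie in $\alpha_h\cap N_\epsilon^T(\alpha_g)$, where $N_\epsilon^T$ denotes the $\epsilon$-neighbourhood in $T$. In a $0$-hyperbolic space such as $T$, the nearest-point projection from $\alpha_h$ onto $\alpha_g$ collapses each half-line outside $\alpha_g\cap\alpha_h$ onto the nearer endpoint of the intersection, so
\[ \mathrm{diam}_T\bigl(\alpha_h\cap N_\epsilon^T(\alpha_g)\bigr) \;\leq\; \mathrm{diam}_T(\alpha_g\cap\alpha_h) + 2\epsilon.\]
Combining with \cref{lemma:tree-axes-intersections}, which applies since $g,h$ have no common non-trivial power, yields
\[ d_T(\pi(p),\pi(q)) \;\leq\; \kappa + \lambda_T(g)\lambda_T(h) + 2\epsilon \;=:\; D.\]

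Next I lift this back to $X$. Since $\pi(p) = h^a\pi(y_0)$ and $\pi(q) = h^b\pi(y_0)$ both lie on $\alpha_h$, and $h$ acts on $\alpha_h$ by translation of length $\lambda_T(h)$, we have $d_T(\pi(p),\pi(q)) = |a-b|\,\lambda_T(h)$. Hence $|a-b|\leq D/\lambda_T(h)$, and so in $X$,
\[ d_X(p,q) \;=\; d_X(h^a y_0, h^b y_0) \;\leq\; |a-b|\,d_X(y_0, h y_0) \;\leq\; \frac{D}{\lambda_T(h)}\,d_X(y_0, h y_0).\]
Taking the supremum over pairs $p,q$ gives the desired bound $K$.

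The only thing to verify is that $K$ indeed depends only on the data listed. The quantity $D$ depends on $\epsilon$, $\kappa$, and the translation lengths $\lambda_T(g),\lambda_T(h)$, which are conjugacy invariants of the action on $T$. The step size $d_X(y_0, hy_0)$ depends on the chosen $y_0\in\pi^{-1}(\alpha_h)$ but, interpreted via the analogous minimising choice to that specified for $x_0$, is the $X$-translation length of $h$ along $\pi^{-1}(\alpha_h)$, again a conjugacy invariant. I expect no serious obstacle: the argument is essentially a bookkeeping exercise once \cref{lemma:tree-axes-intersections} and the $1$-Lipschitz projection are in hand, with the one subtle point being to verify the dependence of the step size on the conjugacy class rather than on the specific basepoint.
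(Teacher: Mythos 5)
Your overall strategy---project to the tree via the $1$-Lipschitz map $\pi$, invoke \cref{lemma:tree-axes-intersections} there, and lift the resulting bound back to $X$---is the same as the paper's, and the tree-level part of your argument (bounding $d_T(\pi(p),\pi(q))$ by $\kappa+\lambda_T(g)\lambda_T(h)+2\epsilon$) is correct. The gap is in the lifting step. Your final estimate $d_X(p,q)\le |a-b|\,d_X(y_0,hy_0)$ yields a constant that depends on $d_X(y_0,hy_0)$, but the setup just before the lemma only imposes a minimising condition on $x_0\in\pi^{-1}(\alpha_g)$: the point $y_0\in\pi^{-1}(\alpha_h)$ is \emph{arbitrary}, so $d_X(y_0,hy_0)$ is not a function of $\epsilon$, $\kappa$, and the conjugacy classes, and your $K$ is not a legitimate constant in the sense required by the statement. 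You notice this yourself in the last paragraph, but resolve it by ``interpreting via the analogous minimising choice,'' which is exactly the hypothesis you do not have.

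The paper avoids this by counting $g$-steps rather than $h$-steps, precisely because $x_0$ \emph{is} a minimiser, so $d_X(x_0,g^m x_0)\le m\,\lambda_X(g)$ with $\lambda_X(g)$ a genuine constant of the setup. In your notation: given $p,q\in N_\epsilon(A(g))\cap A(h)$ realising the diameter, choose $g^{m_1}x_0$ and $g^{m_2}x_0$ within $\epsilon$ of $p$ and $q$ respectively; then $d_X(p,q)\le 2\epsilon+|m_1-m_2|\,\lambda_X(g)$, while projecting to $T$ and running your tree estimate on the $\alpha_g$-side gives $|m_1-m_2|\,\lambda_T(g)\le \mathrm{diam}_T(\alpha_g\cap\alpha_h)+2\epsilon\le\kappa+\lambda_T(g)\lambda_T(h)+2\epsilon$. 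Dividing and substituting produces a bound depending only on $\epsilon$, $\kappa$, $\lambda_T(g)$, $\lambda_T(h)$, and $\lambda_X(g)$, which is the paper's $K$. So the fix is simply to swap which orbit you count along; the rest of your argument carries over.
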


\begin{proof}
Let $K = \frac{\lambda_X(g)}{\lambda_T(g)}(\kappa+\lambda_T(g)\lambda_T(h) +2\epsilon)$
and suppose that $ \mathrm{diam}(N_{\epsilon}(A(g)) \cap A(h)) > K$.  

After replacing $y_0$ with $h^k \cdot y_0$ and $x_0$ with $g^l \cdot x_0$ for some $k,l \in \Z$, we may assume that $d(x_0,y_0 )\leq \epsilon$ and $d(g^m \cdot x_0, h^n \cdot y_0) \leq \epsilon$ for some $m,n \in \Z - 0$, such that $d(x_0, g^m\cdot x_0) > K$. Now, $d(x_0, g^m \cdot x_0) \leq m \cdot d(x_0, g\cdot x_0) = m\cdot \lambda_X(g)$ and thus $m>\frac K{\lambda_X(g)}$.

Since $\pi$ is 1-Lipschitz, we have that $d(\pi(x_0), \pi(y_0)) \leq \epsilon$ and $d(g^m \cdot \pi(x_0), h^n \cdot \pi(y_0)) \leq \epsilon$. It follows that 
\[
\mathrm{diam}(\alpha_g \cap \alpha_h) \,\geq\, \lambda_T(g) \cdot m - 2\epsilon 
\,>\, \lambda_T(g)\frac K{\lambda_X(g)}-2\epsilon \,=\, \kappa + \lambda_T(g)\lambda_T(h).
\]
This contradicts \cref{lemma:tree-axes-intersections}.
\end{proof}

Suppose now that $h$ acts elliptically on $T$ and let $y_0 \in \pi^{-1}(\mathrm{Fix}_T(h))$. Let $A(h) = \{h^n \cdot y_0\}_{n\in \mathbb{Z}}$. The following is analogue of \cref{lemma:general-axes-intersections} in this situation.

\begin{lemma}\label{lemma:loxodromic-elliptic-axes-intersction}
For each $\epsilon > 0$, there exists a constant $K$
that depends only on $\epsilon$ and the conjugacy class of $g$ such that 
\[ \mathrm{diam}_X(N_{\epsilon}(A(g)) \cap A(h)) \leq K.\]
 \end{lemma}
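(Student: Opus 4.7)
The plan is to exploit the fact that the elliptic element $h$ becomes trivial under the projection $\pi$. Since $\pi$ is $G$-equivariant and $\pi(y_0) \in \mathrm{Fix}_T(h)$, for every integer $n$ we have $\pi(h^n \cdot y_0) = h^n \cdot \pi(y_0) = \pi(y_0)$, so $\pi(A(h))$ collapses to the single point $\pi(y_0)$. This is the key structural difference compared with \cref{lemma:general-axes-intersections}: there is no need to invoke acylindricity of the action on $T$ or a common-power hypothesis (which is automatic anyway, since a loxodromic and an elliptic element cannot share a non-trivial power).

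With that observation in hand, the proof is a direct quantitative comparison. I would pick two points $z_1, z_2 \in N_{\epsilon}(A(g)) \cap A(h)$ and approximants $g^{m_i} \cdot x_0 \in A(g)$ with $\dist(z_i, g^{m_i}\cdot x_0) \leq \epsilon$ in $X$. Applying $\pi$ and using that it is $1$-Lipschitz gives $\dist_T(g^{m_i}\cdot \pi(x_0), \pi(y_0)) \leq \epsilon$, and hence $\dist_T(g^{m_1}\cdot \pi(x_0), g^{m_2}\cdot \pi(x_0)) \leq 2\epsilon$ by the triangle inequality. Since $\pi(x_0)$ lies on the axis $\alpha_g$ and $g$ translates $\alpha_g$ by $\lambda_T(g)$, this forces
\[
|m_1 - m_2| \;\leq\; \frac{2\epsilon}{\lambda_T(g)}.
\]

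To complete the argument I would lift this bound back up to $X$. Consecutive points of $A(g)$ are at distance $\lambda_X(g)$ in $X$, so $\dist(g^{m_1}\cdot x_0, g^{m_2}\cdot x_0) \leq 2\epsilon\, \lambda_X(g) / \lambda_T(g)$, and inserting the two $\epsilon$-approximations yields
\[
\dist(z_1, z_2) \;\leq\; 2\epsilon\Bigl(1 + \tfrac{\lambda_X(g)}{\lambda_T(g)}\Bigr).
\]
Setting $K := 2\epsilon(1 + \lambda_X(g)/\lambda_T(g))$ then does the job, and since both $\lambda_T(g)$ and $\lambda_X(g)$ are determined by the conjugacy class of $g$, $K$ has precisely the claimed dependence. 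I do not anticipate any serious obstacle here: once one notices that projecting kills the $h$-orbit, the rest is just bookkeeping, and the result is actually strictly easier than \cref{lemma:general-axes-intersections}.
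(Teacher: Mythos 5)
Your proof is correct and takes essentially the same route as the paper: both use the fact that $\pi(A(h))$ collapses to the single point $\pi(y_0)\in\Fix_T(h)$, push the $\epsilon$-approximation through the $1$-Lipschitz map $\pi$ to bound the power of $g$ by $2\epsilon/\lambda_T(g)$, and then translate this back to a distance bound in $X$ via $\lambda_X(g)$. The only differences are cosmetic: you argue directly rather than by contradiction, and your constant $K=2\epsilon\bigl(1+\lambda_X(g)/\lambda_T(g)\bigr)$ is in fact slightly more careful, since it accounts for the two $\epsilon$-approximations that the paper's normalisation ``replace $y_0$ by $h^k y_0$ and $x_0$ by $g^l x_0$'' silently absorbs into the statement $d(x_0,g^m x_0)>K$.
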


 \begin{proof}
Let $K = \lambda_X(g) \frac{2\epsilon}{\lambda_T(g)}$ and suppose that $\mathrm{diam}(N_{\epsilon}(A(g)) \cap A(h)) > K$. 
    
As before, after replacing $y_0$ with $h^k \cdot y_0$ and $x_0$ with $g^l \cdot x_0$ for some $k,l \in \Z$, we may assume that $d(x_0,y_0 )\leq \epsilon$ and $d(g^m \cdot x_0, h^n \cdot y_0) \leq \epsilon$ for some $m,n \in \Z - 0$, such that $d(x_0, g^m\cdot x_0) > K$. Thus, since $\pi$ is 1-Lipschitz, we have that $d_T(\pi(x_0), \pi(y_0)) \leq \epsilon$ and $d_T(g^m \cdot \pi(x_0), h^n \cdot \pi(y_0))  = d_T(g^m \cdot \pi(x_0), \pi(y_0)) \leq \epsilon$.
Hence, $d_T(\pi(x_0) , g^m \cdot \pi(x_0)) \leq 2\epsilon$. Now since $\pi(x_0) \in \mathrm{Axis}_T(g)$, we have that $d_T(\pi(x_0) , g^m \cdot \pi(x_0)) = m \cdot\lambda_T(g)$, and thus $m \leq \frac{2\epsilon}{\lambda_T(g)}$. 

On the other hand, $d(x_0, g^m \cdot x_0) \leq m \cdot d(x_0, g\cdot x_0) = m\cdot \lambda_X(g)$ and thus
\[ m \,>\, \frac K{\lambda_X(g)} \,=\, \frac{2\epsilon}{\lambda_T(g)}.\]
This is a contradiction. 
\end{proof}

\section{The UPG case}

For the remainder of this article, a \emph{product subgroup} $P \leq G$ is assumed to be a non-trivial product, i.e. of the form $P \cong H \times K$ where $H$ and $K$ are both non-trivial.

The aim of this section is to prove the following:

\begin{proposition}\label{prop:main}
Let $G$ be a \{f.g.\ free\}-by-cyclic group with a UPG monodromy. There exists a quasitree $X(G)$ with an acylindrical $G$-action and a $G$-equivariant quasiisometry
\[ X(G) \to \mathrm{Cay}(G; S \cup \mathcal{P}),\]
where $\mathcal{P}$ is a set of orbit representatives of maximal product subgroups of $G$ and $S \subset G$ is a finite generating set. If the degree of polynomial growth is positive, then the action of $G$ on $X(G)$ is non-elementary.
\end{proposition}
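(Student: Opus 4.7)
The plan is to induct on the degree $d$ of polynomial growth of the monodromy. The base case $d=0$ is essentially trivial: then the monodromy is trivial in $\mathrm{Out}(F)$, so after adjusting the lift of $t$ we may take $\phi=\mathrm{id}$. If $F$ is trivial, $G\cong\Z$ and we set $X(G)=\Z$. Otherwise $G\cong F\times\Z$ is itself a maximal product subgroup, so $\mathrm{Cay}(G;S\cup\mathcal{P})$ has diameter at most $2$, and we take $X(G)$ to be a single vertex; the action is elementary, consistent with the statement.

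For $d=1$, I would take $X(G)$ to be the Bass--Serre tree $T$ of the bipartite splitting from \cref{prop:linear-splitting}, which is already a tree with a $4$-acylindrical $G$-action. To construct the equivariant quasiisometry to $\mathrm{Cay}(G;S\cup\mathcal{P})$, fix a basepoint $v_0\in T$ whose stabiliser is a maximal product subgroup and send $g\in G$ to $g\cdot v_0$. This map is coarsely Lipschitz because generators in $S$ move $v_0$ a bounded amount and cone edges move through cosets contained in single vertex stabilisers. It is a coarse embedding because each tree edge traversed forces either an $S$-step or a cone-off through a distinct coset; the $V_1$-vertex and edge groups sit inside the adjacent maximal-product $V_0$-vertex groups, so cone steps at $V_1$-vertices still contribute to tree distance in a controlled way. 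Non-elementarity and acylindricity on $X(G)$ are then immediate from the analogous properties of the action on $T$.

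For $d>1$, the plan is to use the topmost edge splitting of \cref{sec:splittings}, which is $2$-acylindrical, has infinite cyclic edge groups, and has each non-cyclic vertex group $G_v$ a \{f.g.\ free\}-by-cyclic group of UPG monodromy of degree at most $d-1$. By the inductive hypothesis, each such $G_v$ has an acylindrical quasitree $X(G_v)$ equivariantly quasiisometric to $\mathrm{Cay}(G_v;S_v\cup\mathcal{P}_v)$. I would build $X(G)$ as the tree of spaces over the Bass--Serre tree $T$: replace each vertex $v$ by $X(G_v)$ (or a point if $G_v\cong\Z$), and glue across each edge $e$ along the coarse axes in the neighbouring $X(G_v)$'s of the edge group $\langle t_e\rangle$. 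The image of $t_e$ in $G_v$ has the form $t_v g_\rho^{-1}$ and acts loxodromically on the Bass--Serre tree of $G_v$ as noted at the end of \cref{sec:splittings}, and hence loxodromically on $X(G_v)$ via the inductive quasiisometry. That $X(G)$ is itself a quasitree should then follow from a standard tree-of-quasitrees combination argument, for which the bounded coarse-intersection input between gluing axes is supplied by \cref{lemma:common-powers,lemma:general-axes-intersections,lemma:loxodromic-elliptic-axes-intersction} applied to the $2$-acylindrical splitting. Acylindricity of the $G$-action on $X(G)$ and non-elementarity are obtained from the same axis-intersection estimates combined with the inductive acylindricity on each $X(G_v)$ and the presence of independent loxodromic pairs in the action on $T$.

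The hard part will be matching maximal product subgroups of $G$ against those of the vertex groups $G_v$ in order to compare $X(G)$ with $\mathrm{Cay}(G;S\cup\mathcal{P})$. Once one verifies that every maximal product subgroup of $G$ is either conjugate into a maximal product of some $G_v$ (and is then handled inductively) or stabilises a proper subtree of $T$ (and is then handled by the tree-of-spaces structure), the inductive quasiisometries on the vertex groups assemble along $T$ into the required equivariant quasiisometry $X(G)\to\mathrm{Cay}(G;S\cup\mathcal{P})$.
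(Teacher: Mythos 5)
Your proposal follows essentially the same route as the paper: induct on the degree of polynomial growth, use the bipartite $\Z^2$-splitting of \cref{prop:linear-splitting} for $d=1$, and for $d>1$ build $X(G)$ as a tree of quasitrees over the Bass--Serre tree of the topmost edge splitting, gluing vertex quasitrees along the coarse axes of the edge-group generators, with \cref{lemma:common-powers,lemma:general-axes-intersections,lemma:loxodromic-elliptic-axes-intersction} supplying the bounded coarse-intersection input and acylindricity being deduced from the vertex-group cases together with the acylindricity of the splitting. One small slip: in the inductive step you write ``replace each vertex $v$ by $X(G_v)$ (or a point if $G_v\cong\Z$)'', which contradicts your own base case $X(\Z)=\Z$; the $\Z$-vertices must carry a line, not a point, since otherwise those (non-product) cyclic vertex groups would be collapsed and the equivariant quasiisometry to $\mathrm{Cay}(G;S\cup\mathcal{P})$ would fail.
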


\subsection{The construction} \label{sh:XG}

Let $G$ be infinite cyclic or \{f.g.\ free\}-by-cyclic with UPG monodromy. We define $X(G)$ by induction on the degree of growth of the monodromy of $G$. As part of the induction, we will show that there exists a $G$-equivariant 1-Lipschitz map 
\[ \pi \colon X(G) \to T_G\]
where $T_G$ is a $G$-tree. In the case that $G$ is not cyclic and not a product, $T$ is the Bass--Serre tree of the splittings described in \cref{sec:splittings}, and otherwise $T_G = X(G)$ and $\pi$ is the identity map.

\medskip

Suppose first that $G \cong \Z$. Then, we identify $X(G)$ with the real line $\R$ equipped with its standard cell structure with vertices at integer points. Taking the origin as the basepoint, the action of $G$ on $\R$ is by translation. If $G$ is \{f.g.\ free\}-by-cyclic where the monodromy grows polynomially of degree zero, then we set $X(G)$ to be a point and the action to be the trivial action.

\medskip

Suppose that the degree of growth is linear. Let $G \cong \pi_1(\mathbb{G}, x_0)$ be the splitting from \cref{prop:linear-splitting}. We set $X(G)$ to be the associated Bass--Serre tree and define the $G$-action via the isomorphism $G \cong \pi_1(\mathbb{G}, x_0)$. We set $T_G := X(G)$ and $\pi$ to be the identity map. 

\medskip

Now let $G = F \rtimes_{\phi} \Z$ where $\phi$ represents a unipotent outer automorphism that grows polynomially of degree $d > 1$. Suppose that $X(G)$, $T_G$, and $\pi$ are defined for all \{f.g.\ free\}-by-cyclic groups with UPG monodromy of degree less than $d$. 

Let $G \cong \pi_1(\mathbb{G})$ be the topmost edge splitting of $G$ and let $T$ be the associated Bass--Serre tree. For each vertex $v \in V(T)$ and edge $e \in E(T)$, let $G_v$ be the stabiliser of $v$ and $G_e$ be the stabiliser of $e$, respectively, under the induced action of $G$ on $T$. For each $v \in V(T)$, let $X_v$ be a copy of $X(G_v)$ and for each $e \in E(T)$, let $\sigma_e$ denote the generator of $G_e$. 

Fix $e \in E(T)$ and let $v \in\{e^+,e^-\}$. Let $T_{v} := T_{G_v}$ be the $G_v$-quasitree constructed by induction and $\pi_{v} \colon X_v \to T_v$ the associated 1-Lipschitz map. Let $g_v = \partial_{e}^{\pm}(\sigma_e) \in G_v$. Pick a vertex $y_0$ in the preimage of the minset of $g_v$ in $T_v$ that minimises $\dist_{X_v}(g_v\cdot y_0, y_0)$. We set $A(g_v) := \{g_v^n\cdot y_0\}_{n\in \mathbb{Z}}$. 

\begin{remark}
    Note that if an element $g \in G$ acts elliptically on $X(G)$ then it acts elliptically on $T$. Thus, there exists some vertex $v \in V(T)$ such that $g \in G_v$ and hence $g$ preserves $X_v$. By induction, we observe that if $g$ acts elliptically on $X(G)$ then it necessarily fixes a vertex of $X(G)$. Hence, each set $A(g_v)$ either consists of a single vertex $\{y_0\}$, or it is a quasigeodesic in $X_v$. 
\end{remark}


Now, for every edge $e \in E(T)$, if $A(g_{e^{+}}) = \{g_{e^+}^n\cdot y_0\}$ and $A(g_{e^{-}}) = \{g_{e^{-}}^n\cdot y_0'\}$, add an edge joining $g_{e^+}^n\cdot y_0$ to $g_{e^-}^n\cdot y_0'$ for every $n \in \mathbb{Z}$. Let $X(G)$ be the resulting space, equipped with the path metric. We will identify each vertex space $X_v$ with its image in $X(G)$, although we note that the map $X_v\hookrightarrow X(G)$ may not be a quasiisometric embedding. We let $\pi \colon X(G) \to T$ be the map that sends each point in $X_v$ to the vertex of $T$ labelled by $v$, and maps each edge joining a vertex in $X_v$ to a vertex in $X_w$, for $v\neq w$, over the edge $e = (v,w)$ in $T$. Clearly $\pi$ is 1-Lipschitz. Moreover, the action of each $G_v$ on $X_v$ extends uniquely to an action of $G$ on $X(G)$ so that the map $\pi \colon X(G) \to T$ is equivariant.

\begin{lemma} \label{lem:cayley}
$X(G)$ is equivariantly quasiisometric to $\Cay(G,S\cup\cal P)$ for some finite set $S\subset G$ and a collection $\mathcal{P}$ of maximal product subgroups of $G$ such that every maximal product subgroup of $G$ is conjugate to an element of $\mathcal{P}$.
\end{lemma}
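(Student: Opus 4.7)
I proceed by induction on the degree $d$ of polynomial growth of the monodromy. The base cases $d\in\{0,1\}$ are handled directly. If $d=0$, then either $G\cong\Z$ (so $X(G)=\R$ is already a Cayley graph and $\mathcal{P}=\emptyset$), or, using that a unipotent matrix of finite order is trivial, $G\cong F\times\Z$ is itself a maximal product subgroup, and $X(G)$ being a point matches $\Cay(G,S\cup\{G\})$ being bounded. If $d=1$, then $X(G)$ is the Bass--Serre tree of the 4-acylindrical splitting from \cref{prop:linear-splitting}; every vertex stabiliser is a maximal product subgroup of $G$, so taking $\mathcal{P}$ to be a set of $G$-orbit representatives of vertex stabilisers and invoking the standard quasiisometry between a Bass--Serre tree and the Cayley graph with all vertex-group cosets coned off closes this case.

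For the inductive step with $d>1$, I work with the topmost edge splitting $G\cong\pi_1(\mathbb{G})$ and its 2-acylindrical Bass--Serre tree $T$. Each vertex stabiliser $G_v$ has UPG monodromy of degree at most $d-1$, so by induction there is a $G_v$-equivariant quasiisometry $X_v\to\Cay(G_v,S_v\cup\mathcal{P}_v)$ with $\mathcal{P}_v$ containing a conjugacy representative of each maximal product subgroup of $G_v$. I would take $S$ to be a finite generating set of $G$ that contains generators of the edge groups together with finite generating sets of one orbit representative $G_v$ in each $G$-orbit of vertices, and let $\mathcal{P}$ be a set of $G$-orbit representatives of $\{\,gPg^{-1}:v\text{ an orbit rep},\,P\in\mathcal{P}_v,\,g\in G\,\}$.

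The crucial algebraic step is to show that $\mathcal{P}$ meets every conjugacy class of maximal product subgroups of $G$. Any non-trivial product subgroup $Q=A\times B\leq G$ acts elliptically on $T$: in an acylindrical action of a torsion-free group on a tree, the centraliser of a loxodromic element is infinite cyclic, and two non-trivial subgroups of an infinite cyclic group always intersect non-trivially, so neither $A$ nor $B$ can contain a loxodromic element. Since commuting elliptic tree isometries share a fixed subtree, $Q$ fixes a vertex and hence lies in some $G_v$. Because the edge stabilisers of the topmost edge splitting are cyclic and $Q$ is non-cyclic, $Q$ sits in a unique vertex stabiliser, and maximality in $G$ is then equivalent to maximality in the ambient $G_v$. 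Thus every maximal product subgroup of $G$ is $G$-conjugate to a maximal product subgroup of some orbit representative $G_v$, and hence to an element of $\mathcal{P}$.

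The quasiisometry is then a standard tree-of-spaces argument. In $\Cay(G,S\cup\mathcal{P})$, each coset of a vertex group $G_v$ inherits the metric of $\Cay(G_v,S_v\cup\mathcal{P}_v)$ (the $\mathcal{P}_v$-cones being provided by $G$-conjugates of elements of $\mathcal{P}$), and adjacent cosets in $T$ are joined by a uniformly bounded number of $S$-edges coming from edge generators. This is exactly the combinatorial structure used in the construction of $X(G)$ by gluing the $X_v$ along the axes of edge generators, so the equivariant orbit map $g\mapsto g\cdot x_0$ (for a choice of basepoint $x_0$) is the desired quasiisometry. I expect the main obstacle to be the algebraic correspondence between maximal product subgroups of $G$ and those of the vertex groups $G_v$; once that is in place, the final tree-of-spaces comparison is routine.
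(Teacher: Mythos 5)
Your proposal follows essentially the same route as the paper's proof, organised as an explicit induction on the growth degree $d$ rather than the paper's ``flattened'' description via the set $\operatorname{Lin}_{\leq}(G)$ of bottom-level vertex groups in the cyclic hierarchy; both approaches hinge on the same two observations, namely that product subgroups are elliptic for the acylindrical splittings, and that maximality in $G$ corresponds to maximality in the appropriate vertex stabiliser. Your argument for ellipticity via centralisers of loxodromics (and hence that any maximal product subgroup of $G$ is $G$-conjugate into a vertex group) is correct and is a nice way of making explicit what the paper summarises as ``by acylindricity of the cyclic hierarchy''.

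There is, however, a genuine gap in your base case $d=1$. You assert that in the Bass--Serre tree of the splitting from \cref{prop:linear-splitting} ``every vertex stabiliser is a maximal product subgroup of $G$'', but this is false: the splitting is bipartite, and the $V_1$-side vertex groups are $\Z^2$-subgroups to which the edge inclusions are \emph{onto}. Consequently each such $\Z^2$ vertex group coincides with its adjacent edge groups and hence is contained in the adjacent non-abelian product vertex groups, so it is not maximal as a product subgroup. If you keep all vertex stabilisers as your $\mathcal P$, the quasiisometry part still goes through, but you have not produced a $\mathcal P$ consisting of \emph{maximal} product subgroups as the lemma requires. The paper handles this precisely: it observes that such a $\Z^2$ is contained in some $P'\in\mathcal P$ stabilising an adjacent vertex, so it can be deleted from $\mathcal P$ without changing the quasiisometry type of $\Cay(G;S\cup\mathcal P)$, and iterating this removes all non-maximal entries. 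Your proposal needs this cleanup step. (A secondary remark: your final ``tree-of-spaces'' comparison asserts that cosets of $G_v$ in $\Cay(G;S\cup\mathcal P)$ inherit the metric of $\Cay(G_v;S_v\cup\mathcal P_v)$; this is not automatic and is cleaner to sidestep as the paper does, by running a single Milnor--Schwarz argument on the cocompact action of $G$ on the already-built $X(G)$, taking $\mathcal P$ to be the stabilisers of finitely many orbit representatives of vertices.)
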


\begin{proof}

If $G$ is cyclic or a product then the result is clear. 

Suppose now that $G$ has UPG monodromy of degree $d > 0$. Let $\operatorname{Lin}_{\leq}(G)$ denote the set of at most linearly growing \{f.g.\ free\}-by-cyclic subgroups appearing as vertex stabilisers in the cyclic hierarchy arising from the topmost edge splitting of $G$ (if $G$ has linearly growing monodromy then we take the cyclic hierarchy to be trivial and set $\operatorname{Lin}_{\leq}(G) = \{G\}$). 

Note that the vertex set $V(X(G))$ of the graph $X(G)$ is precisely $\bigsqcup_{H\in\operatorname{Lin}_{\leq}(G)}V(X(H))$. Recall that for $H \in \operatorname{Lin}_{\leq}(G)$, if $H \cong \Z$ then $X(H)$ is a line with a non-trivial $H$-action, if $H \cong F' \times \Z$ where $F' \neq 1$ then $X(H)$ is a point, and if $H$ is linearly growing then $X(H)$ is the Bass--Serre tree $T_H$ of $H$ for the splitting in \cref{prop:linear-splitting}. 

Let $A\subset V(X(G))$ be a finite set of orbit representatives. For each pair of subsets $B,C\subset A$, choose an element $g_{B,C}\in G$ with the property that $(g_{B,C}B)\cap A=C$ whenever such an element exists. Let $S$ be the finite set of elements $g_{B,C}$ chosen this way. By the standard argument from the proof of the Milnor--Schwarz Lemma \cite{milnor:curvature, schwarz:volume}, $X(G)$ is equivariantly quasiisometric to the Cayley graph of $G$ with respect to the generating set consisting of $S$ together with the stabilisers of the elements in $A$, which we denote by $\mathcal{P}$. 

Now, if $P \in \mathcal{P}$ stabilises a vertex of the topmost edge splitting of $G$ then it is a maximal product subgroup of $G$. Otherwise, it stabilises a vertex of the Bass--Serre tree $T_H$ of a linearly growing element $H \in \operatorname{Lin}_{\leq}(G)$, in which case it is either maximal or it is a $\mathbb{Z}^2$-subgroup that is contained in the stabiliser of every adjacent vertex to $v$ in $T_H$. In the latter case, there must exist some adjacent vertex $w$ to $v$ that is also an element of $A$. Hence, the stabiliser $P' := \mathrm{stab}_G(w)$ is an element of $\mathcal{P}$ and $P \leq P'$. Then, we may remove $P$ from $\mathcal{P}$ without altering the quasiisometry type of $\mathrm{Cay}(G; S \cup \mathcal{P})$. Continuing in this way, we can ensure that every element of $\mathcal{P}$ is a maximal product subgroup.

Conversely, by acylindricity of the cyclic hierarchy, any maximal product subgroup is contained in some unique non-cyclic element of $\operatorname{Lin}_{\leq}(G)$, and if it is contained in a linearly growing $H \in \operatorname{Lin}_{\leq}(G)$, then by acylindricity of the splitting in \cref{prop:linear-splitting}, it must be elliptic for the action on $T_L$, and thus conjugate to some element of $\mathcal{P}$.
\end{proof}

\subsection{Quasitree and acylindricity}

The goal of this subsection is to show that $X(G)$ is a quasitree and the $G$-action is acylindrical. Throughout the section, we will fix $T = T_G$ to be the tree and $\pi \colon X(G) \to T$ the 1-Lipschitz map constructed in \cref{sh:XG}. 
We begin with the following lemma:

\begin{lemma} \label{lem:bounded_intersections}
If $e_1$ and $e_2$ are edges of $T$ with $e_1^+=e_2^\pm$, then the images of the edge-inclusions of $X_{e_1}$ and $X_{e_2}$ in $X_{e_1^+}$ have uniformly bounded coarse intersection.
\end{lemma}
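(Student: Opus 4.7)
The plan is to reduce the claim to the axis-intersection estimates of \cref{lemma:general-axes-intersections} and \cref{lemma:loxodromic-elliptic-axes-intersction}, applied inductively to the action of $G_{e_1^+}$ on $T_{e_1^+}$ and the $1$-Lipschitz equivariant map $\pi_{e_1^+}\colon X_{e_1^+}\to T_{e_1^+}$. Setting $v:=e_1^+$, the image of $X_{e_i}$ in $X_v$ is the orbit $A(g_{e_i,v})=\{g_{e_i,v}^n\cdot y_0^{(i)}\}_{n\in\Z}$, where $g_{e_i,v}\in G_v$ is the image of a generator of $G_{e_i}$ under the edge inclusion into $G_v$. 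Since $v$ is the terminal endpoint of a topmost edge, the non-trivial suffix $\rho_1$ of $f(e_1)$ is a loop in the component of $\Gamma\ssm\bigcup_{e\in\mathcal{E}_d}\mathrm{int}(e)$ containing $v$, so that component has non-trivial fundamental group. Hence $F_v\neq 1$ and $G_v$ is a non-cyclic \{f.g.\ free\}-by-cyclic group with UPG monodromy of strictly smaller degree, so the data $X_v$, $T_v$, $\pi_v$ are defined inductively. By the observation at the end of \cref{sec:splittings}, $g_{e_1,v}=t_vg_{\rho_1}^{-1}$ then acts loxodromically on $T_v$.

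The key technical point is that $g_{e_1,v}\neq g_{e_2,v}$ in $G_v$. Both elements lie in $F\cdot t$, and they are distinguished by their suffix data. When $e_1,e_2$ are lifts of distinct quotient edges, distinctness is immediate from condition~(4) of the definition of a topological representative, which forces distinct edges to have distinct non-trivial suffixes and hence distinct elements $t_vg_{\rho_i}^{-1}\in G_v$. When $e_1$ and $e_2$ are distinct lifts of a single quotient edge, they correspond to distinct cosets of the image of the edge group in $G_v$; the corresponding conjugate cyclic stabilisers coincide only if the conjugator lies in the centraliser of $g_{e_1,v}$, but the unique-roots property (\cref{lemma:common-powers}) combined with the fact that $g_{e_1,v}$ acts loxodromically on the acylindrical tree $T_v$ forces this centraliser to equal $\langle g_{e_1,v}\rangle$, which is itself contained in the image of the edge group --- contradicting that the two cosets are distinct. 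With $g_{e_1,v}\neq g_{e_2,v}$ established, \cref{lemma:common-powers} gives directly that these two elements share no common non-trivial power.

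By the inductive hypothesis the action of $G_v$ on $T_v$ is $\kappa$-acylindrical for some uniform $\kappa$ (namely $\kappa=2$ from the topmost edge splitting, or $\kappa=4$ from \cref{prop:linear-splitting} when $T_v$ is the linear splitting tree). If $g_{e_2,v}$ acts loxodromically on $T_v$, \cref{lemma:general-axes-intersections} applied to $\pi_v$ yields the desired bound on $\diam_{X_v}\!\left(N_\eps(A(g_{e_1,v}))\cap A(g_{e_2,v})\right)$; if $g_{e_2,v}$ acts elliptically on $T_v$, \cref{lemma:loxodromic-elliptic-axes-intersction} provides the bound instead. In both cases the resulting constant depends only on $\kappa$, a fixed $\eps$, and the conjugacy classes of the pair $(g_{e_1,v},g_{e_2,v})$ in $G_v$. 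Since the topmost edge graph of groups is finite, there are only finitely many $G$-orbits of ordered pairs of adjacent edges of $T$, hence only finitely many conjugacy classes of such pairs $(g_{e_1,v},g_{e_2,v})$, and the bound may therefore be taken uniform over all $(e_1,e_2)$.

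The main obstacle is the centraliser calculation in the second paragraph: one must verify that the centraliser in $G_v$ of a generator $g\in F\cdot t$ of an edge stabiliser equals $\langle g\rangle$. This is plausible in the UPG setting given the unique-roots property and the loxodromic behaviour of $g$ on an acylindrical Bass--Serre tree, but the argument requires some care. Everything else is bookkeeping combining the inductive hypotheses of \cref{sh:XG} with the tree-axis comparisons of \cref{lemma:general-axes-intersections} and \cref{lemma:loxodromic-elliptic-axes-intersction}.
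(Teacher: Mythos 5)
Your overall structure matches the paper: reduce to showing the generators $g_{e_1,v}=\partial_{e_1}^+(\sigma_{e_1})$ and $g_{e_2,v}=\partial_{e_2}^\pm(\sigma_{e_2})$ have no common power, then apply \cref{lemma:general-axes-intersections}, \cref{lemma:loxodromic-elliptic-axes-intersction}, and \cref{lemma:common-powers}, and finish with the finiteness of conjugacy classes by cocompactness. The difference, and the weak point, is in how you establish that $g_{e_1,v}\neq g_{e_2,v}$. The paper gets this in one line: the proof of $2$-acylindricity of the topmost edge splitting in \cite[Lemma~5.2]{kudlinskavaliunas:free} shows that the pointwise stabiliser of any length-two edge path $e_1e_2$ or $e_1\bar e_2$ in $T$ is trivial, which is exactly the statement $G_{e_1}\cap G_{e_2}=1$ and hence forces the two generators to differ. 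You instead try to re-derive this from the definition of a unipotent representative plus a centraliser computation. You acknowledge the centraliser gap (showing $C_{G_v}(g_{e_1,v})=\langle g_{e_1,v}\rangle$ requires combining loxodromicity, torsion-freeness, the unique-roots property, and the fact that $g_{e_1,v}\in F_v\cdot t_v$ rules out proper roots by looking at the $\Z$-coordinate; it works but is genuine work). Less explicitly acknowledged is a gap in the ``distinct quotient edges'' case: condition~(4) gives that the suffixes $\rho_1,\rho_2$ are distinct as immersed loops, but they may be based at different vertices of the collapsed component $\Gamma^{(v)}$, so promoting $\rho_1\neq\rho_2$ to $g_{\rho_1}\neq g_{\rho_2}$ as elements of $F_v$ requires fixing the basepoint identifications and is not immediate. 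You would also need to track the interaction between the two lifts/inclusions with different orientations ($e_2^+=v$ versus $e_2^-=v$, where $\partial_{e_2}^-(\sigma_{e_2})=t_v$ has no suffix term at all), which your case split does not quite isolate. In short: your argument is morally an attempt to reprove the acylindricity of the splitting that the paper simply cites, and while it can likely be completed, it is considerably more delicate than the one-line appeal to \cite[Lemma~5.2]{kudlinskavaliunas:free}.
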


\begin{proof}
Let $v = e_1^{+} = e_2^{\pm}$. By the proof of \cite[Lemma~5.2]{kudlinskavaliunas:free}, the pointwise stabiliser of a path of the form $e_1 e_2$ or $e_1 \bar{e}_2$ in $T$ is trivial.  Thus, it must be the case that the images $\partial_{e_1}^{+}(\sigma_{e_1})$ and $\partial_{e_2}^{\pm}(\sigma_{e_2})$ are distinct.

Since the action of $G$ on $T$ is cocompact, there are finitely many conjugacy classes of elements of the form  $\partial_{e_1}^{+}(\sigma_{e_1})$ and $\partial_{e_2}^{\pm}(\sigma_{e_2})$ over all edges $e_1$ and $e_2$ in $T$ with $e_1^{+} = e_2^{\pm}$.

As discussed in \cref{sec:splittings}, the element $\partial_{e_1}^{+}(\sigma_e)$ is loxodromic for the action of $G_v$ on the Bass--Serre tree $T_v$. Hence, by \cref{lemma:general-axes-intersections} and \cref{lemma:loxodromic-elliptic-axes-intersction}, either $\partial_{e_1}^{+}(\sigma_e)$ and $\partial^\pm_{e_2}(\sigma_{e_2})$ have a common power, or their axes in $X_v$ have uniformly bounded coarse intersection. 

By \cref{lemma:common-powers}, if $\partial_{e_1}^{+}(\sigma_e)$ and $\partial^\pm_{e_2}(\sigma_{e_2})$ have a common power then they must be equal. The result now follows. \end{proof}



Given $x\in X$ and a vertex $v\in T$, let $\pi_v(x)$ be a closest point in $X_v\subset X$ to $x$.

\begin{lemma} \label{lem:shrinking}
If $\dist_T(v,w)>2$, then $\diam\pi_v(X_w)$ is uniformly bounded. 
\end{lemma}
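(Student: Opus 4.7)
Set $n = \dist_T(v,w)\geq 3$, and let $w = v_0, v_1, \ldots, v_n = v$ be the $T$-geodesic, with $e_i$ denoting the edge from $v_{i-1}$ to $v_i$. Put $A := A(g_{v_{n-1}, e_{n-1}})$ and $A' := A(g_{v_{n-1}, e_n})$, both subsets of $X_{v_{n-1}}$. The proof proceeds in three steps.

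First, I would show $\pi_v(X_w) \subseteq A(g_{v, e_n})$. Given $x \in X_w$, let $\gamma$ be an $X$-geodesic from $x$ to $\pi_v(x)$; since $\pi_v(x)$ is closest, $\gamma$ enters $X_v$ exactly once, at its terminal vertex. The projection $\pi \circ \gamma$ is a walk in $T$ from $w$ to $v$, and since $T$ is a tree with $e_n$ the unique edge separating $v$ from $w$, the last edge of $\gamma$ must be the gluing edge lying over $e_n$. Hence $\pi_v(x) \in A(g_{v, e_n})$.

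Second, I would reduce to a projection inside $X_{v_{n-1}}$. The gluing identifies $A(g_{v, e_n})$ bijectively with $A'$, and under this identification $\pi_v(x)$ corresponds to a point $q(x) \in A'$; a short argument shows $q(x)$ is the $X$-closest point in $A'$ to $x$. Applying the same tree argument to $e_{n-1}$, $\gamma$ first enters $X_{v_{n-1}}$ at some $p(x) \in A$, and decomposing $\gamma$ at $p(x)$ yields
\[ \dist_X(x, q(x)) = \dist_X(x, p(x)) + \dist_X(p(x), q(x)), \]
from which it follows that $q(x)$ is also the $X$-closest point in $A'$ to $p(x)$. So it suffices to show that the set of $X$-closest-point projections from $A$ onto $A'$ is uniformly bounded.

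By \cref{lem:bounded_intersections}, $A$ and $A'$ have uniformly bounded coarse intersection in $X$, hence also in $X_{v_{n-1}}$. Inductively on the polynomial-growth degree of the monodromy, $X_{v_{n-1}} = X(G_{v_{n-1}})$ is a quasitree and therefore hyperbolic, so the $X_{v_{n-1}}$-closest-point projection of $A$ onto $A'$ is uniformly bounded. I would then promote this to the $X$-metric: any $X$-shortcut from $p \in A$ to $A'$ must exit $X_{v_{n-1}}$ through one of the axes $A(g_{v_{n-1}, e})$ corresponding to an edge $e$ at $v_{n-1}$ in $T$, and \cref{lem:bounded_intersections} controls the $X$-coarse intersection of each such axis with both $A$ and $A'$. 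The main obstacle lies precisely in this promotion from $X_{v_{n-1}}$-distance to $X$-distance, since detours through adjacent vertex spaces can in principle displace the closest-point projection far along $A'$; ruling this out requires combining the bounded coarse intersections of all (finitely many $G$-orbits of) pairs of axes at $v_{n-1}$ to show that the displacement of the projection under any such detour is uniformly bounded.
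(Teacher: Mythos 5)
Your approach is in the same spirit as the paper's---locate two separating axes in an intermediate vertex space, invoke Lemma~\ref{lem:bounded_intersections}, and combine with separation---but there are two substantive problems.

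First, you fix the vertex $v_{n-1}$ adjacent to $v$, but Lemma~\ref{lem:bounded_intersections} is stated only under the hypothesis $e_1^+=e_2^\pm$: its proof relies crucially on the fact that $\partial_{e_1}^+(\sigma_{e_1})$ is loxodromic on the Bass--Serre tree of $G_{e_1^+}$ (that loxodromicity is exactly what feeds into Lemmas~\ref{lemma:general-axes-intersections} and~\ref{lemma:loxodromic-elliptic-axes-intersction}). There is no guarantee that either $e_{n-1}^+$ or $e_n^+$ equals $v_{n-1}$; both incident edges may point away from $v_{n-1}$. The paper instead selects an interior vertex on the path where the orientation hypothesis holds---such a vertex always exists once $n>2$---precisely so that Lemma~\ref{lem:bounded_intersections} applies verbatim. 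Relatedly, your sentence ``$A$ and $A'$ have uniformly bounded coarse intersection in $X$, hence also in $X_{v_{n-1}}$'' is backwards: the lemma yields a bound on the coarse intersection with respect to the intrinsic metric of the vertex quasitree, and passing from there to a bound in the $X$-metric is the nontrivial direction, since $X$-distances are smaller and $X$-neighbourhoods larger.

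Second, and more importantly, you explicitly flag the promotion from $X_{v_{n-1}}$-distance to $X$-distance as ``the main obstacle'' and do not close it. As stated, your argument therefore does not prove the lemma: you reduce matters to bounding an $X$-closest-point projection between two axes in a single vertex quasitree, observe that detours through neighbouring vertex spaces could in principle displace the projection arbitrarily far, and then stop. The paper's proof leans instead on the separation of $X_v$ from $X_w$ by the edge-strip over $e_m$ together with the bounded coarse intersection at the carefully chosen interior vertex $e_m^+$; the step you identify as missing is exactly the content that would need to be supplied for your route to go through, and sketching that one must ``combine the bounded coarse intersections of all finitely many orbits of axes'' names the difficulty rather than resolving it.
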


\begin{proof}
Suppose $\dist_T(v,w)=n>2$, and let $(e_1,e_2,\dots,e_n)$ be an edge path from $v$ to $w$. As $n>2$, there is some $m<n$ such that $e_m$ is oriented towards $w$. By Lemma~\ref{lem:bounded_intersections}, the images of the edge-inclusions of $X_{e_m}$ and $X_{e_{m+1}}$ into $X_{e_m^+}$ have uniformly bounded coarse intersection. Since the image of $X_{e_m}$ separates $X_v$ from $X_w$, this proves the lemma. 
\end{proof}

We are now ready to show that $X(G)$ is a quasitree. As before, we argue by induction on the degree of growth. 

Given $v\in T$, let $N(v)$ be the set of neighbours of $v$ in $T$ and set 
\[
\bar N(v) \,=\, N(v)\cup\{w\in T\,:\,\diam\pi_wX_v=\infty\}.
\]
Note that $v\in\bar N (v)$, and if $w\in\bar N(v)$ then $v\in\bar N(w)$. If $w\in\bar N(v)$, then let $\bar\pi_wX_v$ be a uniform thickening of $\pi_wX_v$ with the property that each connected component of $X_w\ssm\pi_wX_v$ has uniformly bounded coarse intersection with $\bar\pi_wX_v$. Such a neighbourhood exists because $X_w$ is a quasitree. 

Define $\bar X_v=\bigcup_{w\in\bar N(v)}\bar\pi_wX_v$. Intuitively, $\bar X_v$ is just $X_v$ with fins and spines glued to it: most fins and all spines have length one, but some fins can have length two if the vertex space in the middle corresponds to a vertex of $\Gamma$ that is not the terminus of any edge. Every point of $\bar X_v$ has a unique closest point in $X_v$, from which it is at a uniform distance, by Lemma~\ref{lem:shrinking}.

\begin{proposition} \label{prop:quasitree}
$X(G)$ is a quasitree.
\end{proposition}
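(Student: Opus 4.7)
The plan is to verify the Manning bottleneck criterion for $X(G)$, proceeding by induction on the degree $d$ of polynomial growth of the monodromy. The base cases $d\leq 1$ are immediate: $X(G)$ is either a point, a line, or the Bass--Serre tree from \cref{prop:linear-splitting}. For the inductive step, each vertex space $X_v=X(G_v)$ is a quasitree, and by cocompactness of the $G$-action on $T$ there are only finitely many orbits of such spaces, so the quasitree constants are uniform in $v$.

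I first argue that each thickening $\bar X_v$ is a uniform quasitree. By \cref{lem:shrinking} we have $\bar N(v)\subseteq\{w\in T\,:\,\dist_T(v,w)\leq 2\}$, so only finitely many orbit types of fins $\bar\pi_wX_v$ are attached to $X_v$. Each such fin lies inside another quasitree $X_w$ and, by construction, has the property that each component of $X_w\setminus\pi_wX_v$ meets it in a uniformly bounded set, so the fin itself is a uniform quasitree. Since the fins are attached to $X_v$ along bounded subsets (by \cref{lem:bounded_intersections}, together with the fact that only one pair of edge images is involved at each attachment), a standard combination argument shows that $\bar X_v$ is a quasitree with uniform constants.

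Next I verify the bottleneck criterion for $X(G)$. Given $x,y\in X(G)$, let $[v_0,v_1,\dots,v_n]$ be the $T$-geodesic from $\pi(x)$ to $\pi(y)$. I build a preferred quasigeodesic $\gamma$ from $x$ to $y$ in $X(G)$ by concatenating, for each $i$, a geodesic segment in $X_{v_i}$ connecting the exit point of the gluing from $v_{i-1}$ to the entry point of the gluing to $v_{i+1}$, together with the gluing edges themselves. For any midpoint $m$ of $\gamma$, I split into two cases: if $m$ lies on a gluing edge, then every path from $x$ to $y$ must traverse this edge, since $T$ is a tree and $\pi$ is $G$-equivariant and 1-Lipschitz; if $m$ lies inside some $X_{v_i}$, then every path from $x$ to $y$ must enter and exit $X_{v_i}$, and the inductive quasitree bottleneck inside $X_{v_i}$, applied between the entry and exit points, forces the path to come close to $m$.

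The main obstacle is controlling paths $\gamma'$ from $x$ to $y$ that backtrack across edges of $T$: such a path may leave $X_{v_i}$, wander into some subtree off the geodesic $[v_0,v_n]$, and re-enter $X_{v_i}$. Here \cref{lem:bounded_intersections} is essential: the two images of the edge inclusions through which $\gamma'$ leaves and returns have uniformly bounded coarse intersection in $X_{v_i}$, so $\gamma'$ re-enters $X_{v_i}$ within uniformly bounded distance of its exit point. Iterating this, all excursions of $\gamma'$ into subtrees off $[v_0,v_n]$ can be cut and replaced by bounded detours, leaving a path that behaves exactly as a path respecting the $T$-geodesic structure would, and hence passes within a uniform distance of $m$. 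This yields the bottleneck constant $\Delta$ uniformly, completing the induction.
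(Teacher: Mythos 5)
Your first paragraph (showing each $\bar X_v$ is a uniform quasitree) is consistent with the paper's setup and is fine. The problem is in the bottleneck argument for $X(G)$, where you revert to working vertex-by-vertex with the $X_{v_i}$ rather than with the thickenings $\bar X_v$ you just set up, and this leads to two concrete gaps.

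First, the claim that when $m$ lies on a gluing edge ``every path from $x$ to $y$ must traverse this edge'' is false. Each $T$-edge $e$ contributes a whole parallel family of gluing edges joining $g_{e^+}^n\cdot y_0$ to $g_{e^-}^n\cdot y_0'$ for all $n\in\Z$, spanning an axis $A(g_{e^\pm})$ that is in general unbounded. A competing path certainly has to cross $e$ in $T$, but it may do so along a gluing edge arbitrarily far from the one carrying $m$ along the strip. Second, and relatedly, ``the exit point of the gluing from $v_{i-1}$ and the entry point of the gluing to $v_{i+1}$'' are not canonically determined: you would need to select an index $n$ for each strip, and you do not say how. If one chooses these points only using the local data of $v_{i-1},v_i,v_{i+1}$, the bottleneck applied to the entry/exit points of a \emph{different} path $\alpha$ need not force $\alpha$ near $m$, because $\alpha$ can enter $X_{v_i}$ far along $A(g_{v_i})$ from where $\gamma$ does. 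This is precisely what goes wrong when $\diam\pi_{v_{i+1}}X_{v_{i-1}}=\infty$, which can occur at $T$-distance two (e.g.\ when the middle vertex space is a point and the gluing is a cone over an unbounded axis). The same issue infects the backtracking reduction: \cref{lem:bounded_intersections} requires $e_1^+=e_2^\pm$, and for a pair of edges both oriented away from $v_i$ the hypothesis fails, so you cannot appeal to it uniformly.

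The paper resolves both points by not choosing entry/exit data locally: it passes to the subsequence $(i_j)$ of the $T$-geodesic determined by the $\bar N$ relation, and defines the marked points $x_j,y_j$ as closest-point projections of the global endpoints $x,y$ onto $\bar X_{v_{i_j}}$. Because consecutive $\bar X_{v_{i_j}}$ have uniformly bounded coarse intersection, the $x_j,y_j$ are coarsely well-defined, and one shows that every competing path $\alpha$ must pass uniformly close to every $x_j$ and $y_j$; the intra-$\bar X_v$ case is then invoked. In short: you introduce the $\bar X_v$ but do not use them where they are essential, so the preferred path is not pinned down and the bottleneck claim for points on gluing edges is simply wrong as stated. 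Rewriting the bottleneck step in terms of the super-vertices $\bar X_{v_{i_j}}$ and projections of $x,y$ is not an optional refinement but the content of the argument.
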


\begin{proof}
We use Manning's bottleneck criterion \cite{manning:geometry} in the following form (\emph{cf.} \cite[\S3.6]{bestvinabrombergfujiwara:constructing}): \emph{a geodesic metric space $Y$ is a quasitree if and only if there is a constant $\Delta$ such that for each pair $x,y\in Y$ there is a path $p$ from $x$ to $y$ that lies in the $\Delta$--neighbourhood of every path from $x$ to $y$.}

\medskip

First suppose that there is some $v\in T$ such that $x,y\in\bar X_v$. In this case, let $p_{xy}$ be the concatenation of: a geodesic from $x$ to $\pi_v(x)$; an $X_v$-geodesic from $\pi_v(x)$ to $\pi_v(y)$; and a geodesic from $\pi_v(y)$ to $y$. 

Suppose $\alpha$ is a path in $X(G)$ from $x$ to $y$. We can decompose $\alpha$ as $\alpha=\alpha_0\beta_1\alpha_1\dots\alpha_n$, where $\alpha_i$ is a maximal subpath lying in $\bar X_v$. Let $\eta_i$ be a geodesic in $\bar X_v$ joining the terminal point of $\alpha_{i-1}$ to the initial point of $\alpha_i$, and let $\alpha'=\alpha_0\eta_1\alpha_1\dots\alpha_n$. It follows from Lemma~\ref{lem:bounded_intersections} that the endpoints of each $\beta_i$ are at uniformly bounded distance, and hence $\alpha'$ is contained in a uniform neighbourhood of $\bigcup_{i=0}^n\alpha_i$.

Since $\bar X_v$ is contained in a uniform neighbourhood of $X_v$, the path $\pi_v\alpha'$ is contained in a uniform neighbourhood of $\alpha'$, hence in a uniform neighbourhood of $\alpha$. But now $\pi_v\alpha'$ is a path in the quasitree $X_v$ from $\pi_v(x)$ to $\pi_v(y)$, so it has a uniform neighbourhood containing $p_{xy}$, and hence so does $\alpha$.

\medskip

Now consider a general pair of points $x,y\in X$. Let $v,w\in T$ be such that $x\in X_v$, $y\in X_w$, and let $(v=v_0,v_1,\dots,v_n=w)$ be the geodesic in $T$ from $v$ to $w$. We wish to choose a path $p_{xy}$ from $x$ to $y$. The idea is illustrated in Figure~\ref{fig:pxy}. Let $i_0=0$. Given $i_j$, let $i_j^+>i_j$ be maximal such that $v_{i_j^+}\in\bar N(v_{i_j})$, and if $i_{j^+}\ne n$ then set $i_{j+1}$ to be maximal such that $v_{i_j^+}\in\bar N(v_{i_{j+1}})$. This gives a subsequence $(i_0,\dots,i_m)$ of $(0,\dots,n)$. 

By construction, Lemma~\ref{lem:bounded_intersections} implies that the sets $\bar X_{v_{i_j}}$ have uniformly bounded coarse intersection. For each $j$, let $x_j$ be a closest-point in $\bar X_{v_{i_j}}$ to $x$, which is coarsely well defined, and similarly for $y$. Note that since $i_m^+=n$, we have that $y_m\in X_w$. For notational convenience, we write $x_0=x$ and $x_{m+1}=y$.

\begin{figure}[ht]
\begin{center} \makebox[0pt]{\begin{minipage}{1.3\textwidth}
\centering\includegraphics[width=\linewidth, trim= 5mm 5mm 0 0]{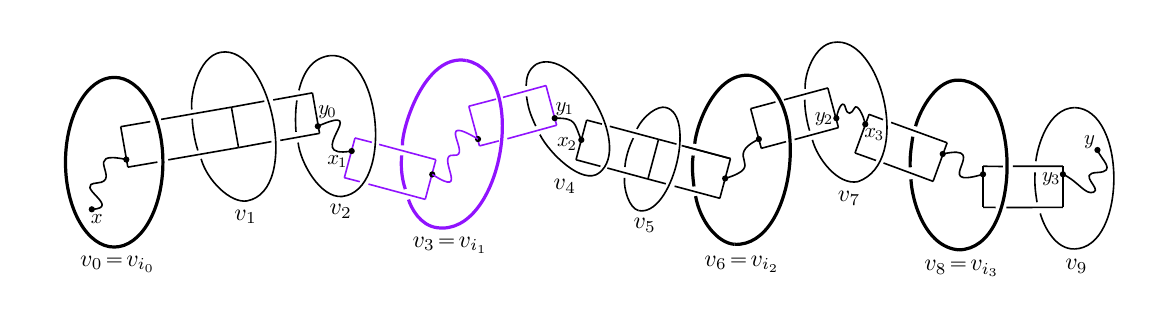}
\end{minipage}}\end{center}
\begin{center} \makebox[0pt]{\begin{minipage}{1.1\textwidth}
\caption{The construction of the path $p_{xy}$ in the proof of Proposition~\ref{prop:quasitree}. The sequence of $X_{v_{i_j}}$ is in bold. Starting with $i_0=0$, we have $i_0^+=2$ because $v_2\in\bar N(v_0)$, and then $i_1=3$. Next $i_1^+=4$, and $i_2=6$ because $v_4\in\bar N(v_6)$. The subspace $\bar X_{v_{i_1}}$ is highlighted in purple. The points $x_1$ and $y_1$ are produced by projecting to it.} \label{fig:pxy}
\end{minipage}}\end{center}
\end{figure}

Since $x_j,y_j\in\bar X_{v_{i_j}}$, we have already defined a path $p_{x_jy_j}$ between them. Similarly, we have already defined $p_{y_j,x_{j+1}}$ to be a geodesic $[y_j,x_{j+1}]$ in $X_{v_{i_j^+}}$, and since the sets $\bar X_{v_{i_j}}$ have uniformly bounded coarse intersection, the definition of $y_j$ and $x_{j+1}$ implies that $[y_j,x_{j+1}]$ is at uniform Hausdorff-distance from every shortest geodesic joining $\pi_{v_{i_j^+}}X_{v_{i_j}}$ and $\pi_{v_{i_j^+}}X_{v_{i_{j+1}}}$. We therefore define $p_{xy}$ to be the concatenation
\[
p_{xy} \,=\, p_{xy_0}[y_0,x_1]p_{x_1y_1}[y_1,x_2]\dots p_{x_my_m}[y_m,y].
\]
Our goal is to show that every path from $x$ to $y$ has a uniform neighbourhood containing $p_{xy}$.

Let $\alpha\subset X$ be a path from $x$ to $y$. Since $T$ is a tree, $\pi\alpha\subset T$ contains the geodesic from $v$ to $w$. In particular, for each $j$ there must be a subpath $\alpha_j$ of $\alpha$ that starts in $\pi_{v_{i_j}}X_{v_{i_j-1}}$ and ends in $\pi_{v_{i_j}}X_{v_{i_j+1}}$. Since $X_{v_{i_j}}$ is a quasitree, a uniform neighbourhood of $\alpha_j$ contains a geodesic in $X_{v_{i_j}}$ between its endpoints. From the property of $y_j$ and $x_{j+1}$ observed above, this implies that $[y_j,x_{j+1}]$, and in particular its endpoints, lie in a uniform neighbourhood of $\alpha_j$. 

We have shown that there is a uniformly bounded perturbation $\alpha'$ of $\alpha$ that: is a path from $x$ to $y$; passes through every $x_j$ and every $y_j$; and has a uniform neighbourhood that contains every $[y_j,x_{j+1}]$. Since $x_j,y_j\in\bar X_{v_{i_j}}$, our above arguments for points in a common vertex space show that every $p_{x_jy_j}$ lies in a uniform neighbourhood of $\alpha'$ as well. This shows that $p_{xy}$ lies in a uniform neighbourhood of $\alpha$, as desired.
\end{proof}

Recall that given $x\in X$ and a vertex $v\in T$, we let $\pi_v(x)$ be a closest point in $X_v\subset X$ to $x$. Let $L\ge1$ be such that every map $\pi_v$ is $L$--coarsely Lipschitz. Let $N\ge1$ be such that if $\dist_T(v,w)\ge N$ then $\diam\pi_v(X_w)<N$. Both $L$ and $N$ can be bounded in terms of the quasitree constant of $X$. 

By induction, the action of each vertex group $G_v$ on its associated quasitree $X_v$ is acylindrical. Given $\eps$, let $r(\eps)\ge1$ and $n(\eps)$ be such that in every vertex group $G_v$, the $\eps$--coarse stabilisers of pairs in $X_v$ at distance at least $r(\eps)$ has cardinality at most $n(\eps)$.

\begin{lemma} \label{lem:acylindrical}
The action of $G$ on $X(G)$ is acylindrical.
\end{lemma}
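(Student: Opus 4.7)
The plan is to argue by induction on the degree of polynomial growth of the monodromy, mirroring the inductive construction of $X(G)$. In the base cases ($G$ cyclic, a product, or linearly growing), $X(G)$ is a point, a line, or the 4-acylindrical Bass--Serre tree of \cref{prop:linear-splitting}, and acylindricity is immediate. For the inductive step with degree $d > 1$, let $T = T_G$ be the Bass--Serre tree of the topmost edge splitting, which is 2-acylindrical, and let $\pi \colon X \to T$ be the $G$-equivariant 1-Lipschitz map. Given $\varepsilon > 0$, I would set a threshold $\Delta = 2\varepsilon + 3$ and split cases according to $\dist_T(\pi(x), \pi(y))$ for the pair of vertices $x, y$ under consideration.

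In the large-tree-distance case, $\dist_T(\pi(x), \pi(y)) > \Delta$, the fact that $\pi$ is 1-Lipschitz implies that any $g$ with $\dist_X(x, gx), \dist_X(y, gy) \leq \varepsilon$ moves $\pi(x)$ and $\pi(y)$ by at most $\varepsilon$ in $T$, so the geodesics $[\pi(x), \pi(y)]$ and $g[\pi(x), \pi(y)]$ share a subsegment of length greater than $2$. If $g$ is elliptic in $T$, its fixed subtree meets $[\pi(x), \pi(y)]$ in a segment exceeding the 2-acylindricity bound, forcing $g = 1$. If $g$ is loxodromic in $T$, its translation length is a positive integer bounded by $\varepsilon$ and its axis fellow-travels $[\pi(x), \pi(y)]$; comparing two such elements with matched direction and translation length yields an elliptic difference fixing a long segment, hence trivial by 2-acylindricity, bounding the loxodromic count by $O(\varepsilon)$.

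In the small-tree-distance case, $\dist_T(\pi(x), \pi(y)) \leq \Delta$, both $x$ and $y$ lie uniformly close in $X$ to a common vertex space $X_v$, by \cref{lem:shrinking}. I would project $x, y$ to $X_v$ via $\pi_v$, obtaining $x', y' \in X_v$ with $\dist_{X_v}(x', y') \gtrsim \dist_X(x, y) \geq R$, and then translate the condition $\dist_X(x, gx) \leq \varepsilon$, modulo a coset of $G_v$ representing the image $g v \in T$, into a statement that a $G_v$-element coarsely stabilises $(x', y')$ up to $O(L\varepsilon)$. The inductive hypothesis bounds each coset-contribution by $n(O(L\varepsilon))$, and the number of distinct cosets is finite because the projections $\pi_v(X_{gv})$ for the relevant $g v$ have uniformly bounded intersection with any fixed ball around $x'$ in $X_v$, via \cref{lem:bounded_intersections} and \cref{lem:shrinking}.

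The main obstacle is controlling the number of $G_v$-cosets in the small-tree-distance case, since $T$ may not be locally finite; \cref{lem:bounded_intersections} and \cref{lem:shrinking} are the key inputs for this control. A secondary delicate point is the loxodromic subcase in the large-tree-distance case, where one exploits that $T$ is simplicial so that translation lengths are integer-valued, ensuring that the enumeration by (direction, translation length) yields only finitely many contributions.
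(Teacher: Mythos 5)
Your high-level strategy matches the paper's: induction on the degree of growth, base cases handled directly, and in the inductive step a case split on whether $\dist_T(\pi(x),\pi(y))$ is large or small relative to a threshold, with the shrinking lemma and bounded-intersection lemma as the key inputs. In the large-tree-distance case your treatment is in fact slightly more complete than the paper's: you separately rule out elliptic $g$ (long fixed segment contradicts $\kappa$-acylindricity of $T$) and bound the loxodromic $g$ (integer translation lengths $\le\eps$, and differences of loxodromics with matched axis/translation length are elliptic with a long fixed segment, hence trivial). This is the standard and correct way to pass from $\kappa$-acylindricity of a tree action to boundedness of coarse stabilisers, and it buys you a clean $O(\eps)$ bound where the paper's phrasing (``must stabilise a geodesic\dots so $g=1$'') is loose about loxodromics.

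The small-tree-distance case is where your proposal has a genuine gap. Your setup claim that ``$x$ and $y$ lie uniformly close in $X$ to a common vertex space $X_v$'' is not what Lemma~\ref{lem:shrinking} gives, and is false in general: $x$ can be arbitrarily far from $X_v$ in $X$. What is true (and what the paper uses) is that since $\dist(x,y)>R^2$ while $\pi(x),\pi(y)$ are $T$-close, there must be some $v\in[\pi(x),\pi(y)]_T$ with $\dist_{X_v}(\pi_v(x),\pi_v(y))>R$; this follows from the shrinking lemma and a pigeonhole argument, not from proximity of $x,y$ to $X_v$. More seriously, the ``coset of $G_v$'' counting you propose does not go through: you acknowledge the tree $T$ need not be locally finite, but the proposed control via $\pi_v(X_{gv})$ having bounded intersection with a ball does not yield a uniform bound on the number of relevant vertices $gv$, and it does not address loxodromic $g$ at all. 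The paper's resolution is different and cleaner: first one shows $g$ \emph{cannot} be loxodromic on $T$, by applying $g^N$ (with $N$ the shrinking constant), so that $\dist_T(v,g^Nv)\ge N$ forces $\diam\pi_v(X_{g^Nv})<N$, which contradicts $\dist_{X_v}(\pi_v(g^Nx),\pi_v(g^Ny))\gtrsim R-2NL\eps$. Then, for elliptic $g$, a tree argument shows $g$ fixes a vertex $w\in[v,gv]_T$ with $\dist_T(v,gv)\le\eps$, so there are at most $\eps+1$ candidate $w$'s, each contributing at most $n(\eps)$ elements of $G_w$ by the inductive hypothesis applied in $X_w$. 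That two-step argument (rule out loxodromics via $g^N$, localise elliptics to a short segment of fixed vertices) is the missing piece you need, and it replaces the coset count entirely.
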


\begin{proof}
Given $\eps\ge1$, let $R=6NL\eps r(\eps)$. Suppose that $x,y\in X$ have $\dist(x,y)>R^2$. If $\dist_T(\pi(x),\pi(y))>R$, then any $g\in G$ that moves both $x$ and $y$ by at most $\eps$ must stabilise a geodesic in $T$ of length $R-2\eps$. This is only possible if $g=1$, because the action of $G$ on $T$ is 4--acylindrical, as described in \cref{sec:splittings}.

Otherwise, $\dist_T(\pi(x),\pi(y))\le R$. Since $\dist(x,y)>R^2$, there must be some vertex $v\in[\pi(x),\pi(y)]_T$ for which $\dist(\pi_v(x),\pi_v(y))>R$. Suppose that $g\in G$ moves both $x$ and $y$ by at most $\eps$. If $g$ is loxodromic on $T$, then $\dist_T(v,g^Nv)\ge N$. However, $\dist(x,g^Nx)\le N\eps$ and $\dist(y,g^Ny)\le N\eps$, which implies that 
\[
\dist(\pi_v(g^Nx),\pi_v(g^Ny)) \,\ge\, \dist(\pi_v(x),\pi_v(y)) - 2NL\eps-2L \,\ge\, R-2NL\eps-2L \,\ge N,
\]
contradicting the fact that $\diam\pi_v(X_{g^Nv})<N$. 

Thus $g$ must act elliptically on $T$. If $v_0\in T$ is fixed by $g$, then $g[v_0,v]_T=[v_0,gv]_T$, so there exists a fixed-point $w$ of $g$ in the geodesic $[v,gv]_T$. That is, $g\in G_w$ for some $w\in[v,gv]_T$. We know that $\dist(\pi_v(x),\pi_v(y))>R$ and, via the triangle inequality, that $\dist(\pi_{gv}(x),\pi_{gv}(y))>R-2L\eps-2L$, so it follows that $\dist(\pi_w(x),\pi_w(y))>R-2L\eps-2L$ as well. But $g\in G_w$, so since $R-2L\eps-2L\ge r(\eps)$, there are at most $n(\eps)$ such elements of $G_w$. We need to bound how many vertices $w$ can arise in this way. But we know that $\dist(x,gx)<\eps$, and hence $\dist(\pi(x),\pi(gx))\le\eps$, and similarly for $y$. Since $v\in[\pi(x),\pi(y)]_T$, this implies that $\dist(v,gv)\le\eps$. In particular, there are at most $\eps+1$ possible such vertices $w$, so $g$ is confined to a set of cardinality at most $(\eps+1)n(\eps)$.
\end{proof}

\section{The general case} \label{sec:general}

In this section, we use \cref{prop:main} to produce a largest acylindrical action for general finitely generated free-by-cyclic groups. Note that the actions in the following are not required to be cobounded.

\begin{definition}\label{defn:accessibility}
A group $G$ is \emph{strongly $\cal{AH}$-accessible} if it has an acylindrical action on a hyperbolic space $\calx$ such that for every other acylindrical action of $G$ on a hyperbolic space $\caly$, we have that $G \acts \caly \prec G \acts \calx$, i.e.\ there exist $x\in \calx$, $y\in \caly$, $r>0$ such that $\dist(y,gy)\le r\dist(x,gx)+r$ for all $g\in G$.
\end{definition}

By the Milnor--Schwarz Lemma \cite{milnor:curvature, schwarz:volume}, each equivalence class of the relation induced by $\prec$ is represented by some (possibly infinite) generating set of $G$. 

\begin{lemma} \label{lem:any_generating_set}
Let $S$ and $T$ be finite generating sets for a group $G$. For every subset $X\subset G$, we have that $G\acts\Cay(G;S\cup X)\asymp G\acts\Cay(G;T\cup X)$.
\end{lemma}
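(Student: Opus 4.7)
The plan is to show that the identity map on $G$ extends to a $G$-equivariant bilipschitz equivalence between $\Cay(G;S\cup X)$ and $\Cay(G;T\cup X)$, with Lipschitz constants depending only on $S$ and $T$. This is more than enough to establish the symmetric relation $\asymp$, because the defining inequality for $\prec$ only requires a comparison of the form $\dist(y,gy)\le r\dist(x,gx)+r$ for some basepoints, and if the identity map on vertices is bilipschitz then taking $x=y=1_G$ yields the required comparison with a purely multiplicative constant.

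First I would record that since $T$ generates $G$, each $s\in S$ can be written as a word in $T$, so $K_{S\to T}:=\max_{s\in S}|s|_T$ is finite; symmetrically define $K_{T\to S}$. The key observation is then that every edge of $\Cay(G;S\cup X)$ labelled by an element of $X$ is literally an edge of $\Cay(G;T\cup X)$ (the $X$-part of the generating set is the same in both graphs), while every edge labelled by $s\in S$ corresponds in $\Cay(G;T\cup X)$ to a path of length at most $K_{S\to T}$. Hence for any $g\in G$ the orbit-map distance satisfies
\[
\dist_{T\cup X}(1,g)\,\le\,K_{S\to T}\cdot\dist_{S\cup X}(1,g),
\]
and by $G$-equivariance the analogous inequality holds between any pair of vertices. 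The symmetric inequality follows the same way.

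These two inequalities together give $G\acts\Cay(G;S\cup X)\prec G\acts\Cay(G;T\cup X)$ and the reverse, hence $\asymp$. There is no real obstacle here; the content is entirely the fact that the two finite generating sets $S$ and $T$ are bilipschitz-equivalent as word metrics on $G$, and this bilipschitz equivalence is preserved when the same extra generating set $X$ is adjoined to both.
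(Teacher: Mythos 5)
Your argument is correct and matches the paper's proof essentially verbatim: both rely on the fact that each element of $S$ is a word of uniformly bounded length in $T$ (and vice versa), so that the identity on vertices is a $G$-equivariant bilipschitz comparison between the two Cayley graphs, with the $X$-edges shared literally. There is no meaningful difference between the two proofs.
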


\begin{proof}
There exists $r$ such that every element of $S$ can be written as a word of length at most $r$ in $T$. Hence $G\acts\Cay(G;S\cup X)\preceq G\acts\Cay(G;T\cup X)$. The other direction is similar.
\end{proof}

We will use the following results to bootstrap $\AH$-accessibility for \{f.g.\ free\}-by-cyclic groups with UPG monodromy to all finitely generated free-by-cyclic groups.

\begin{lemma}[{\cite[Lemma 6]{minasyanosin:correction}}]\label{lem:finite-index-bootstrap}
Let $G$ be a group and $H \leq G$ a normal subgroup of finite index that is \AH-accessible. Let $X$ be a generating set of $H$ that realises \AH-accessibility of $H$. If $Y$ is a set of coset representatives of $H$ in $G$ that includes the identity, then the natural inclusion map
\[ \mathrm{Cay}(H, X) \to \mathrm{Cay}(G, X \cup Y)\]
is an $H$-equivariant quasi-isometry. Moreover, the natural action of $G$ on $\mathrm{Cay}(G, X \cup Y)$ is acylindrical.
\end{lemma}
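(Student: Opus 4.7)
The plan is to verify the two claims in sequence.

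For the quasi-isometry, the inclusion $\iota\colon\Cay(H,X)\to\Cay(G,X\cup Y)$ is $H$-equivariant, $1$-Lipschitz, and has $1$-dense image because $1\in Y$ and $G=HY$, so one only needs a linear bound of $|h|_X$ in terms of $|h|_{G;X\cup Y}$ for $h\in H$. A useful picture is that $\Cay(G,X\cup Y)$ is the disjoint union of the right cosets $Hy$ for $y\in Y$, where each $Hy$ carries the induced structure of $\Cay(H, yXy^{-1})$ under $h\mapsto hy$, and these pieces are linked by $Y$-edges governed by the finitely many relations $y_iy_j=c_{ij}\mu_{ij}$ with $c_{ij}\in H$ and $\mu_{ij}\in Y$. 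The key step is to bound $|yxy^{-1}|_X$ uniformly in $y\in Y$ and $x\in X$: this makes every coset piece uniformly quasi-isometric to $\Cay(H,X)$, so a geodesic in $\Cay(G,X\cup Y)$ between two elements of $H$ can be broken into coset segments, each replaceable by a linearly longer path in $\Cay(H,X)$, with the between-coset transitions costing bounded $X$-length each.

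The uniform conjugation bound I would obtain from canonicity of the largest acylindrical action: its equivalence class is the unique maximum in the poset of acylindrical $H$-actions on hyperbolic spaces, and every automorphism of $H$ induces an order-isomorphism of this poset and hence fixes the maximum. Applied to the conjugation $\phi_y\colon h\mapsto yhy^{-1}$ (an automorphism of $H$ since $H\trianglelefteq G$), this forces $\phi_y$ to be a self-quasi-isometry of $\Cay(H,X)$, with constants depending only on $y$; finiteness of $Y$ then gives the required uniform bound.

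For acylindricity of $G\acts\Cay(G,X\cup Y)$: the space is hyperbolic by the quasi-isometry above, and the $H$-action on it is acylindrical (being $H$-equivariantly quasi-isometric to the acylindrical $H$-action on $\Cay(H,X)$). To upgrade to $G$, I would decompose each $g\in G$ as $g=uh$ with $u\in Y$ and $h\in H$; for each $u\in Y$ the displacement inequalities on $g$ translate into analogous inequalities for $h$ at basepoints shifted by $u$, and fixing one reference solution per coset reduces the count to the standard $H$-acylindricity bound at parameter $2\varepsilon$, summed over the $|Y|$ cosets. The main obstacle is the canonicity step in the middle paragraph — verifying that the restrictions to $H$ of inner automorphisms of $G$ act as quasi-isometries on $\Cay(H,X)$ — which is the only place where strong $\AH$-accessibility of $H$ is essentially used.
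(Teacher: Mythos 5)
Your proposal is correct and follows essentially the same route as the cited reference (Minasyan--Osin, \cite[Lemma~6]{minasyanosin:correction}); the paper itself does not prove this lemma but simply cites it. The central step you identify — that conjugation by a coset representative $y\in Y$ is an automorphism of $H$, so by canonicity of the largest acylindrical action the twisted action of $H$ on $\mathrm{Cay}(H,X)$ is equivalent to the untwisted one, giving a uniform bound on $|yxy^{-1}|_X$ over $x\in X$ and $y\in Y$ — is precisely where $\AH$-accessibility is used in that proof, and the remaining shuffling argument for the quasi-isometry and the coset-by-coset reduction to $H$-acylindricity (at displacement $2\varepsilon$) are both standard and correct.
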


\begin{theorem}[{\cite[Thm~7.9]{abbottbalasubramanyaosin:hyperbolic}}]\label{thm:strong-AH-rel-hyp}
Let $G$ be a group that is hyperbolic relative to a collection of strongly $\mathcal{AH}$-accessible subgroups $H_1, \ldots, H_n \leq G$. For each $1 \leq i \leq n$, let $Y_i$ be a generating set for $H_i$ whose equivalence class realises strong $\mathcal{AH}$-accessibility. There is a finite generating set $S$ of $G$ such that $[S \cup Y_1 \cup \ldots \cup Y_n]$ realises strong $\mathcal{AH}$-accessibility for $G$.
\end{theorem}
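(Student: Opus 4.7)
The plan is to construct an explicit candidate action of $G$ that realises strong $\AH$-accessibility, and to verify the two required properties. I would pick any finite generating set $S_0$ of $G$ and set $\calx=\Cay(G;\,S_0\cup Y_1\cup\ldots\cup Y_n)$; by \cref{lem:any_generating_set}, replacing $S_0$ by a different finite $S$ leaves the equivalence class of the resulting action unchanged, so the freedom in choosing $S$ can be used to absorb finitely many extra elements later. The Cayley graph $\Cay(G;\,S_0\cup H_1\cup\ldots\cup H_n)$ is hyperbolic by relative hyperbolicity, and $\calx$ is obtained from it by substituting each coned-off peripheral coset $gH_i$ with a copy of the hyperbolic space $\Cay(H_i;Y_i)$. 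A combination theorem of the Bowditch/Dahmani--Groves--Osin type then yields that $\calx$ is hyperbolic. The cleanest packaging of both hyperbolicity and acylindricity is via hyperbolically embedded subgroups: $\{H_i\}\hookrightarrow_h G$ holds by relative hyperbolicity, and writing $\mathcal{K}_i$ for the hyperbolically embedded family in $H_i$ corresponding to the $Y_i$-action, transitivity of $\hookrightarrow_h$ in the sense of Dahmani--Guirardel--Osin would give $\bigcup_i\mathcal{K}_i\hookrightarrow_h G$, producing an acylindrical $G$-action on a hyperbolic space equivariantly quasiisometric to $\calx$.

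The main content is to show that $G\acts\calx$ dominates every acylindrical action $G\acts\caly$ on a hyperbolic space. I would first observe that the restricted action $H_i\acts\caly$ is itself acylindrical, so the strong $\AH$-accessibility hypothesis on $H_i$ provides $x_i\in\Cay(H_i;Y_i)$, $y_i\in\caly$ and $r_i>0$ with $\dist_\caly(y_i,hy_i)\le r_i\dist_{Y_i}(x_i,hx_i)+r_i$ for every $h\in H_i$. The task is then to upgrade these $n$ local dominations to a single global one for $G$. To that end I would enlarge $S_0$ to a finite set $S$ that (i) generates $G$, (ii) acts with uniformly bounded displacement on $\caly$ at a fixed basepoint $y_0$, and (iii) contains, for each $i$, an element taking $y_0$ within a bounded distance of $y_i$. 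A geodesic from $e$ to $g$ in $\calx$ decomposes as an alternating product $g=s_1h_1\cdots s_kh_k$ with $s_j\in S_0$ and $h_j\in H_{i_j}$, with length coarsely equal to $k+\sum_j\dist_{Y_{i_j}}(e,h_j)$. Chasing this decomposition and applying the local linear bound at each $h_j$-segment, after correcting the basepoint via the elements in (iii), should yield a single constant $r$ for which $\dist_\caly(y_0,gy_0)\le r\dist_\calx(e,g)+r$.

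The hard part will be this patching. The $n$ local dominations use pairwise different basepoints $y_i$, whereas the desired global bound must be stated at a single $y_0$, and successive $h_j$-segments act from cosets $s_1h_1\cdots s_{j-1}H_{i_j}$ rather than from the identity. One must therefore show that the $H_{i_j}$-translates required to assemble $gy_0$ can be realigned with the corresponding $y_{i_j}$ at uniformly bounded cost; this is where acylindricity of $G\acts\caly$ does the real work, combined with the fact that there are only finitely many $G$-conjugacy classes of peripheral subgroups and hence only finitely many orbits of such realignment moves to control. Once this is established, the per-segment bounds telescope along the alternating word, the displacements coming from the $s_j$ add linearly, and one obtains the single constant $r$ witnessing $G\acts\caly\prec G\acts\calx$, completing the verification that $[S\cup Y_1\cup\ldots\cup Y_n]$ realises strong $\AH$-accessibility for $G$.
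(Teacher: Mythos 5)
This theorem is imported from Abbott--Balasubramanya--Osin and the paper offers no proof of its own, so there is no in-paper argument to compare your attempt against; I will instead assess your sketch against the strategy of the cited source.

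Your candidate space $\calx=\Cay\bigl(G;\,S_0\cup Y_1\cup\cdots\cup Y_n\bigr)$ is the right one, and the domination step is actually simpler than you fear. Fix any acylindrical $G\acts\caly$ with $\caly$ hyperbolic and any basepoint $y_0\in\caly$. The restriction to each $H_i$ remains acylindrical, so the hypothesis on $H_i$ provides $y_i\in\caly$ and $r_i$ with $\dist_\caly(y_i,hy_i)\le r_i|h|_{Y_i}+r_i$ for all $h\in H_i$. Each $z\in Y_i$ therefore moves $y_i$ by at most $2r_i$, hence moves $y_0$ by at most $2r_i+2\dist_\caly(y_0,y_i)$, and each element of the finite set $S_0$ also displaces $y_0$ a bounded amount. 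So every generator in $S_0\cup\bigcup_iY_i$ moves $y_0$ by at most a uniform $C$, and for any $g$ with a length-$m$ geodesic word $z_1\cdots z_m$ in $\calx$, telescoping together with $G$-invariance of $\dist_\caly$ gives $\dist_\caly(y_0,gy_0)\le\sum_l\dist_\caly(y_0,z_ly_0)\le Cm=C\,\dist_\calx(e,g)$. No basepoint ``realignment'', no alternating/distance-formula decomposition, and no acylindricity of $G\acts\caly$ beyond its restriction to each $H_i$ are required; the concern in your final paragraph dissolves.

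The genuine gap is in the part you dispatch quickly: showing that $[\,S\cup\bigcup Y_i\,]$ lies in $\AH(G)$ at all, i.e.\ that $\calx$ is hyperbolic and that $G\acts\calx$ is acylindrical. Your route --- present the $Y_i$-action of $H_i$ as coning off a family $\mathcal K_i\hookrightarrow_h H_i$, then invoke transitivity of hyperbolic embedding plus the fact that coning off a hyperbolically embedded collection gives an acylindrical action --- silently assumes that the largest acylindrical action of $H_i$ is equivalent to one obtained by coning off a hyperbolically embedded family. That is neither a hypothesis of the theorem nor established in your sketch, and it is exactly the kind of structural claim that needs proof. The cited proof avoids this detour: Abbott--Balasubramanya--Osin develop a structure theory for the poset $\AH(G)$ of a relatively hyperbolic $G$, showing that refining the cone over each peripheral coset to a copy of $\Cay(H_i;Y_i)$ preserves hyperbolicity and acylindricity and sets up an order-preserving correspondence with a subposet of $\prod_i\AH(H_i)$. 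That analysis, rather than a combination theorem plus transitivity of $\hookrightarrow_h$, is the substantive ingredient your sketch is missing.
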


We can now prove the acylindricity part of \cref{thm:intro}. 

\begin{theorem}\label{thm:main}
Let $G$ be a free-by-cyclic group with a finite generating set $S$, and let $\cal P$ be a set of orbit representatives of maximal product subgroups. The Cayley graph $\mathrm{Cay} (G; S \cup \mathcal{P})$ is hyperbolic and the natural action of $G$ is acylindrical. 
    
Moreover, for every acylindrical action of $G$ on a hyperbolic space $\caly$, we have that 
\[ G \acts \caly \,\preceq\, \mathrm{Cay} (G; S \cup \mathcal{P}).\] 
In particular, $G$ is strongly \AH-accessible.
\end{theorem}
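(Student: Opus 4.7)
The plan is to argue in three stages according to the complexity of the monodromy: the UPG case (using \cref{prop:main} directly), the general polynomially-growing case (via the finite-index bootstrap \cref{lem:finite-index-bootstrap}), and the general free-by-cyclic case (via \cref{prop:rel-hyp} and the relatively hyperbolic bootstrap \cref{thm:strong-AH-rel-hyp}). In the UPG case, \cref{prop:main} together with \cref{lem:cayley} already furnishes an equivariant quasi-isometry between $\mathrm{Cay}(G;S\cup\cal P)$ and a quasitree on which $G$ acts acylindrically, so hyperbolicity of the Cayley graph and acylindricity of the regular action follow at once, and only strong $\AH$-accessibility remains to prove in this case.

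For the largest property in the UPG case, I would consider an arbitrary acylindrical action $G\acts\caly$ on a hyperbolic space. Every $P\in\cal P$ is a direct product $H\times K$ of two infinite subgroups of $G$, so each element of $P$ is centralised by an infinite subgroup of $P$. Since loxodromic elements of an acylindrical action on a hyperbolic space have virtually cyclic centralisers, no element of $P$ acts loxodromically on $\caly$, and Osin's trichotomy for acylindrical actions then forces $P$ to have bounded orbits on $\caly$. Combining this with the finiteness of $\cal P$ from \cref{lem:cayley} and the finiteness of $S$, there is a uniform bound $D$ on $\dist_\caly(y,sy)$ as $s$ ranges over $S\cup\bigcup_{P\in\cal P}P$ and $y$ is any fixed basepoint; the triangle inequality then yields $\dist_\caly(y,gy)\le D\cdot\dist_X(1,g)$ in $X=\mathrm{Cay}(G;S\cup\cal P)$ for all $g\in G$, which is exactly the domination required by \cref{defn:accessibility}.

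For the general PG case, I would pass to a finite-index normal subgroup $G' = F\rtimes_{\phi^n}n\Z$, choosing $n$ so that $\phi^n$ is UPG (possible because a polynomially growing automorphism of a free group has eigenvalues that are roots of unity on the abelianisation, so some power of it is unipotent). By the UPG case, $G'$ is strongly $\AH$-accessible via $S'\cup\cal P'$, and \cref{lem:finite-index-bootstrap} bootstraps this to strong $\AH$-accessibility for $G$ via $S'\cup\cal P'\cup Y$, with $Y$ a set of coset representatives. For the general free-by-cyclic case, \cref{prop:rel-hyp} presents $G$ as hyperbolic relative to $\{$f.g.\ free$\}$-by-cyclic peripherals with PG monodromy, each strongly $\AH$-accessible by the previous step; \cref{thm:strong-AH-rel-hyp} then yields strong $\AH$-accessibility of $G$ with respect to an appropriate generating set built from the peripheral ones.

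The main obstacle I expect is the final bookkeeping step: identifying the Cayley graphs produced by the two bootstrap theorems with $\mathrm{Cay}(G;S\cup\cal P)$. For \cref{lem:finite-index-bootstrap}, this uses that every maximal product subgroup of $G$ intersects $G'$ in a finite-index product and vice versa, so $\cal P$ and $\cal P'$ differ only by finite-index adjustments. For \cref{thm:strong-AH-rel-hyp}, one needs that every product subgroup of $G$ is conjugate into a peripheral: product subgroups are thick of order zero, and by \cite{behrstockdrutumosher:thick} combined with the thickness of the peripherals \cite{hagen:remark}, thick subgroups of a relatively hyperbolic group with thick peripherals are conjugate into a peripheral, so maximal product subgroups of $G$ coincide with maximal product subgroups of the peripherals. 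In both cases, \cref{lem:any_generating_set} then delivers the needed equivariant quasi-isometry with $\mathrm{Cay}(G;S\cup\cal P)$, completing the argument.
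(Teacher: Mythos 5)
Your proposal is correct and follows the same three-stage decomposition as the paper: UPG via Proposition~\ref{prop:main} and Lemma~\ref{lem:cayley}, general PG via the finite-index bootstrap of Lemma~\ref{lem:finite-index-bootstrap}, and general free-by-cyclic via Proposition~\ref{prop:rel-hyp} and Theorem~\ref{thm:strong-AH-rel-hyp}. The one place you deviate is the domination step in the UPG case: the paper records that the vertex stabilisers (maximal product subgroups) have bounded orbits in any acylindrical action on a hyperbolic space and then invokes \cite[Proposition~4.13]{abbottbalasubramanyaosin:hyperbolic}, whereas you supply the centraliser/Osin-trichotomy justification for bounded orbits and conclude by a direct triangle-inequality estimate; this is a more self-contained rendering of the same idea. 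Your extra bookkeeping about matching up generating sets (commensurability of maximal product subgroups with those of a finite-index subgroup, and conjugacy of product subgroups into peripherals) fills in details the paper leaves implicit.
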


\begin{proof}
We begin by assuming that $G = F_n\rtimes_{\phi} \Z$ where $\phi$ is a representative of a UPG outer automorphism. By \cref{prop:main} $\mathrm{Cay}(G; S \cup \mathcal{P})$ is a quasitree and the action of $G$ on $\mathrm{Cay}(G; S \cup \mathcal{P})$ is acylindrical. If we add a cone point for each coset of an element of $\cal P$, then each nontrivial vertex stabiliser $\mathrm{stab}_G(v)$ is a maximal product subgroup of $G$. Hence, for every acylindrical action of $G$ on a hyperbolic space $\caly$, the induced action of $\mathrm{stab}_G(v)$ on $\caly$ has bounded orbits. It follows by \cite[Proposition~4.13]{abbottbalasubramanyaosin:hyperbolic} that
\[G \acts \caly \,\preceq\, \mathrm{Cay} (G; S \cup \mathcal{P}). \]

Suppose now that $G$ is \{f.g.\ free\}-by-cyclic with PG monodromy and let $G' \trianglelefteq_{f.i} G$ be a normal subgroup of finite index such that $G'$ admits UPG monodromy. By the previous argument, $G'$ has a largest acylindrical action, represented by $S' \cup \mathcal{P}$ where $S'$ is a finite generating set of $G'$ and $\mathcal{P}$ the set of maximal product subgroups. Let $T$ be a collection of coset representatives of $G'$ in $G$ that contains the identity. By \cref{lem:finite-index-bootstrap}, we have that $\mathrm{Cay}(G; S' \cup T \cup \mathcal{P})$ is a quasitree and the action of $G$ is acylindrical. Thus, setting $S = S' \cup T$, we obtain a finite generating set $S$ of $G$ such that $\mathrm{Cay}(G; S \cup \mathcal{P})$ is a quasitree and the action of $G$ is acylindrical. The same holds for an arbitrary finite generating set of $G$ by \cref{lem:any_generating_set}.
By the same argument as above, it follows that 
\[G \acts \caly \,\preceq\, \mathrm{Cay} (G; S' \cup \mathcal{P}) \]
for every acylindrical action of $G$ on a hyperbolic space $\caly$.
    
Finally, suppose that $G$ is finitely generated free-by-cyclic. By \cref{prop:rel-hyp}, there is a finite collection $\calh$ of \{f.g.\ free\}-by-cyclic subgroup of $G$ with PG monodromies such that $G$ is hyperbolic relative to $\calh$. Combining \cref{thm:strong-AH-rel-hyp} with the observations above yields the result.
\end{proof}

\section{Morseness and stability} \label{sec:stability}

Let $X$ be a metric space and let $M \colon \mathbb{R}_{\geq 0} \times \mathbb{R}_{\geq 0} \to \mathbb{R}$ be a function. A quasigeodesic $\gamma$ in $X$ is said to be \emph{(M-)Morse} if every $(K,C)$-quasigeodesic $\eta$ with endpoints on $\gamma$ lies in the $M(K,C)$-neighbourhood of $\gamma$. We call $M$ a \emph{Morse gauge} for $\gamma$. 



\begin{proposition}\label{lemma:Morse-detectable}
Let $G$ be a free-by-cyclic group with a finite generating set $S$. Let $X = \mathrm{Cay}(G; S \cup \mathcal{P})$ be as in \cref{thm:main} and let $\hat\iota \colon \mathrm{Cay}(G,S) \to X$ be the natural embedding. Let $\gamma \colon I \to \mathrm{Cay}(G, S)$ be a $(K,C)$-quasigeodesic. The following are equivalent.
    \begin{enumerate}
        \item \label{it:1} $\gamma$ is Morse. 
        \item \label{it:2} $\gamma$ has uniformly bounded intersection with all cosets of elements in $\mathcal{P}$.
        \item \label{it:3} $\hat\iota \circ \gamma \colon I \to X$ is a quasigeodesic.
    \end{enumerate}
    Moreover, the Morse gauge of $\gamma$ can be chosen to only depend on $K,C,$ and the quasigeodesic constants of $\hat\iota \circ \gamma$, and conversely the quasigeodesic constant of $\hat\iota \circ \gamma$ can be chosen to only depend on $K,C$ and the Morse gauge of $\gamma$.
\end{proposition}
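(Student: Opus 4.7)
The plan is to prove $(3) \Leftrightarrow (2)$ and $(2) \Leftrightarrow (1)$, exploiting the hyperbolicity of $X$ from \cref{thm:main} and the key geometric observation that each coset $gP$ of a maximal product subgroup lies in the closed star of a cone vertex of $X$, and hence has uniformly bounded $X$-diameter. All three conditions describe how $\gamma$ interacts with cosets of product subgroups, and this diameter bound is the geometric bridge between them.

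For $(3) \Rightarrow (2)$: if $\gamma(s), \gamma(t)$ lie in a common coset $gP$, then $d_X(\gamma(s), \gamma(t))$ is uniformly bounded, so by the quasigeodesic property of $\hat\iota \circ \gamma$, $|t - s|$ is uniformly bounded, which in turn bounds the $\Cay$-diameter of $\gamma \cap gP$ linearly in the quasigeodesic constants of $\gamma$. For $(2) \Rightarrow (3)$, the upper bound $d_X \le d_{\Cay(G,S)}$ is automatic since $\hat\iota$ is $1$-Lipschitz; the lower bound follows by a bounded-coset-penetration-style argument, using (2) to ensure that long sub-arcs of $\gamma$ must pass through many distinct product cosets, each contributing to the $X$-distance. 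For $(1) \Rightarrow (2)$: if $\gamma$ had unbounded intersection with some $gP$ with $P = H \times K$, pick points $p, q \in \gamma \cap gP$ at arbitrarily large $\Cay(G, S)$-distance and build a competitor quasigeodesic from $p$ to $q$ by detouring far along the $K$-factor and returning. Such a detour has uniform quasigeodesic constants but wanders a distance proportional to $d_{\Cay(G,S)}(p,q)$ from any direct path, and hence from $\gamma$, violating its Morse property for large $d_{\Cay(G,S)}(p,q)$.

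The main obstacle is $(2) \Rightarrow (1)$. Given a $(K'', C'')$-quasigeodesic $\eta$ in $\Cay(G, S)$ with endpoints on $\gamma$, one must uniformly bound $d_{\Cay(G, S)}(q, \gamma)$ for $q \in \eta$. The naive approach of projecting $\eta$ to $X$ and invoking Morse stability in the hyperbolic $X$ fails directly, because $\hat\iota \circ \eta$ need not be a quasigeodesic in $X$. My plan is instead to produce a substitute path $\bar\eta$ in $X$ by decomposing $\eta$ into maximal subsegments contained in a single coset of $\mathcal{P}$ and replacing each such subsegment by the bounded-length $X$-path through the associated cone vertex. Consecutive substitutions are separated by an $S$-edge of $\eta$, and since subsegments of $\eta$ are themselves $(K'', C'')$-quasigeodesics in $\Cay(G, S)$, this forces $\bar\eta$ to be an unparameterised quasigeodesic in $X$ with constants depending only on $K''$ and $C''$. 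Combined with $(2) \Rightarrow (3)$ applied to $\gamma$, stability of quasigeodesics in the hyperbolic space $X$ forces $\bar\eta$ to lie in a bounded $X$-neighbourhood of $\hat\iota \circ \gamma$. The crux, and where I expect most of the remaining technical work to go, is converting this $X$-closeness back into $\Cay(G,S)$-closeness: each vertex of $\bar\eta$ is a vertex of $G$, its closest point on $\gamma$ in $X$ is too, and a bounded-length $X$-path between them is lifted to a bounded-length $\Cay$-path by expanding each cone-edge into a $\Cay$-geodesic inside the relevant coset between representatives chosen on $\eta$ and on $\gamma$, whose diameters are controlled by (2) on the $\gamma$ side and by the $(K'', C'')$-quasigeodesicity of $\eta$ on the $\eta$ side. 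Tracking the constants through this chain of estimates yields the quantitative dependencies asserted in the ``moreover'' clause.
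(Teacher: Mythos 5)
Your plan founders on $(2)\Rightarrow(3)$, which is in fact the technical heart of the paper's proof and which you dispose of with a single sentence invoking a ``bounded-coset-penetration-style argument.'' Such an argument is not available: $G$ is \emph{not} hyperbolic relative to $\cal P$ (the maximal product subgroups can have infinite pairwise intersection, e.g.\ in $\Z^2$'s in the linear-growth splitting), so $X=\Cay(G;S\cup\cal P)$ is not the coned-off Cayley graph of a relatively hyperbolic structure and BCP does not apply. Moreover, the heuristic that ``long sub-arcs of $\gamma$ must pass through many distinct product cosets, each contributing to the $X$-distance'' is simply false as stated: a path can visit a long chain of pairwise-distinct product cosets while making little or no progress in $X$, because cosets can overlap or be joined by shortcuts elsewhere in $X$. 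The paper's proof of $(2)\Rightarrow(3)$ is an induction on the degree of polynomial growth of the monodromy, carried out at the level of the explicit tree-of-spaces construction of $X$: one shows the statement for the Bass--Serre tree of the $\Z^2$-splitting in the linear case, then lifts it through the cyclic hierarchy coming from the topmost-edge splitting, via a projection $\pi\colon X\to T$ to the Bass--Serre tree $T$, a claim that subpaths of $\gamma$ in a single vertex group project to quasigeodesics of $X$ (not just of $X_v$), and a no-backtracking claim for $\pi\circ\gamma$; the non-unipotent PG case is handled by passing to a finite-index subgroup, and the general case by the relative distance formula of Sisto applied to the maximal PG peripherals. None of this inductive machinery appears in your sketch, and it cannot be replaced by a soft argument.

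A secondary point: your lengthy substitute-path argument for $(2)\Rightarrow(1)$ is unnecessary. Since the $G$-action on $X$ is acylindrical (\cref{thm:main}), the implication $(3)\Rightarrow(1)$ is precisely \cite[Lemma~4.6]{charneycordessisto:complete}, so once $(2)\Rightarrow(3)$ is in hand one obtains $(2)\Rightarrow(1)$ for free by composing. Your version of $(3)\Rightarrow(2)$ and $(1)\Rightarrow(2)$ are essentially correct and match the paper's (the latter boils down to the fact that maximal product subgroups are undistorted, so a quasi-flat can't carry a Morse quasigeodesic). But the cycle you propose still has $(2)\Rightarrow(3)$ as a link, and that is where the real work lies.
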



\begin{proof}
Let $X = \mathrm{Cay}(G; S \cup \mathcal{P})$ be as in \cref{thm:main}, and let $\hat\iota \colon \mathrm{Cay}(G; S) \to X$ be the natural 1-Lipschitz embedding. Firstly, note that if $\hat\iota \circ \gamma$ is a quasigeodesic, then \cite[Lemma~4.6]{charneycordessisto:complete} states that $\gamma$ is $M$-Morse, because $G$ acts acylindrically on $X$. Secondly, if $\gamma$ is Morse then it must have uniformly bounded intersection with the elements of $\cal P$, because they are product subgroups that are undistorted \cite[Lemma~4.1]{mutanguha:onpolynomial}.

\medskip

Now suppose that $\gamma\colon I\to\Cay(G;S)$ intersects each coset of each element of $\mathcal{P}$ in a subset of uniformly bounded diameter. It remains to show that $\hat\iota\circ\gamma$ is a uniform quasigeodesic. By \cref{lem:cayley}, there is no loss in replacing $X$ by the space constructed in Section~\ref{sh:XG}, so we shall conflate the two. 

If $G$ is \{f.g.\ free\}-by-cyclic with linearly-growing and unipotent monodromy, then $X$ is the Bass--Serre tree of the $\Z^2$-splitting from \cref{prop:linear-splitting}. Since $\gamma$ has uniformly bounded intersections with the vertex-stabilisers, the path $\hat\iota\circ\gamma$ must be a quasigeodesic in the tree $X$.

\medskip 

Next suppose that the monodromy of $G$ is unipotent and polynomially growing of degree $d>1$, and assume that we know the result for all degrees less than $d$. From the construction of $X$, one could appeal to the description of quasigeodesics in \emph{trees of hyperbolic spaces} given in \cite[\S7]{kapovichsardar:trees}, but we shall prove that $\hat\iota\circ\gamma$ is a quasigeodesic directly. 

Let $T$ be the Bass--Serre tree of the topmost edge splitting of $G$, and let $\pi:G\to T$ be the associated projection. There is no loss in replacing $\Cay(G;S)$ with the tree of spaces obtained from this splitting by setting the vertex spaces to be Cayley graphs of the vertex groups with respect to finite generating sets, so again we shall conflate the two. Thus $G$ has vertex spaces $\{G_v\,:\,v\in T\}$ and two adjacent vertex spaces are joined by a strip $\Sigma_e$ between $\partial_e^-(\sgen{\sigma_e})$ and $\partial_e^+(\sgen{\sigma_e})$. Write $\Sigma_e^\pm\subset G_{e^\pm}$ for the two sides of the strip. There are finitely many isometry types of such strips.

By induction, we know that if $\beta\subset\gamma$ is a connected subpath such that $\beta\subset G_v$ for some vertex $v\in T$, then $\hat\iota\circ\beta$ is a uniform quasigeodesic in the quasitree $X_v$ with the intrinsic metric, though in general $X_v$ will be distorted in $X$.

\begin{claim} \label{claim:stay_quasi}
$\hat\iota\circ\beta$ is a uniform quasigeodesic of $X$.
\end{claim}

\begin{claimproof}
First of all, we remark that $\hat\iota\circ\beta$ is a uniformly Lipschitz path in $X$, because $\hat\iota$ is 1--Lipschitz. We must show that it is also uniformly colipschitz. 

Recall that $X$ is built inductively by taking vertex spaces $X_w=X(G_w)$ and adding edges from $A(g_{e^+})\subset X_{e^+}$ to $A(g_{e^-})\subset X_{e^-}$ for every edge $e$ of $T$. The metric space $(X_v,\dist_X)$ can therefore be thought of as being obtained from the quasitree $(X_v,\dist_{X_v})$ by coning off axes of certain loxodromic elements. If $e^\pm=v$, then the axis $A(g_v)\subset X_v$ gets coned off only if $g_{e^\pm}$ fixes a vertex of $X_{e^\mp}$, in which case $g_{e^\mp}$ is contained in a product subgroup of $G_{e^\mp}$, and hence of $G$. 

Since $\beta$ intersects each coset of each element of $\cal P$ in a subset of uniformly bounded diameter, it follows that $\hat\iota\circ\beta$ spends a uniformly bounded amount of time in the axes of $X_v$ that get coned off. Thus $(\hat\iota\circ\beta,\dist_X)$ is colipschitz with constant a uniform amount worse than that of $(\hat\iota\circ\gamma,\dist_{X_v})$. 
\end{claimproof}


By considering the components of the intersections of $\gamma$ with the vertex spaces $G_v$, we now know that $\hat\iota\circ\gamma$ is a concatenation of uniform quasigeodesics, but this is not enough to conclude that it is a quasigeodesic itself. First we handle the overlaps between adjacent vertex spaces of $X$.

Given a vertex $v$ of $T$, recall the thickening $\bar X_v$ defined before \cref{prop:quasitree}. Let $J\subset I$ be a maximal subinterval such that $\hat\iota\circ\gamma|_J\subset\bar X_v$. The complement $\bar X_v\ssm X_v$ is a disjoint union of sets of diameter two and strips, all of which correspond to quasigeodesic edge-strips (possibly of width two) in the tree of spaces $G$. Because of this, the fact that $\gamma$ is a quasigeodesic in $G$ together with Claim~\ref{claim:stay_quasi} implies that $\hat\iota\circ\gamma|_J$ is a uniform quasigeodesic in $X$. 

We now show that $\pi\circ\gamma$ does not make meaningful backtracks in $T$.

\begin{claim} \label{claim:backtrack}
If $x=\gamma(s)$ and $y=\gamma(t)$, with $s<t$, are such that $\hat\iota(x),\hat\iota(y)\in\bar X_v$ but $\hat\iota(\gamma(r))\notin \bar X_v$ for all $r\in(s,t)$, then $\dist_G(x,y)$ is uniformly bounded.
\end{claim}

\begin{claimproof}
As $T$ is a tree, there is a vertex $w\in\bar N(v)$ such that $\pi(x)=\pi(y)=w$. By the construction of $\bar X_v$, each connected component of $X_w\ssm\bar X_v$ has uniformly bounded coarse intersection with $\pi_w(X_v)\subset\bar X_v$ in the metric space $(X_w,\dist_{X_w})$. In particular, since $\hat\iota\circ\gamma(s,t)$ is disjoint from $\bar X_v$ but its endpoints $x$ and $y$ have $\hat\iota(x),\hat\iota(y)\in\bar X_v$, it follows that $\dist_{X_w}(\hat\iota(x),\hat\iota(y))$ is uniformly bounded. 

Let $e$ be the edge of $T$ that contains $w$ and is part of the geodesic from $v$ to $w$. Note that either $e=vw$ or $\dist_T(v,w)=2$. Let $u$ denote the vertex of $e$ that is not $w$. For concreteness, let us assume that $u=e^-$, though the argument is the same in either case. We have $x,y\in\Sigma_e^+\subset G_w$. 

When we build $X$, we attach edges to the set $\pi_w(X_v)\subset X_w$ that connect it to the vertex space $X_u$. The set $\pi_w(X_v)$ is either a vertex or it is an axis of a loxodromic isometry of $(X_w,\dist_{X_w})$, namely $g=\partial_e^+(\sigma_e)$. 

If $\pi_w(X_v)$ is a vertex of $X_v$, then $x$ and $y$ lie in the stabiliser of that vertex, which is a product subgroup of $G$. Since $\{x,y\}\subset\gamma$, the distance in $G$ from $x$ to $y$ is uniformly bounded by the assumption on $\gamma$. 

Otherwise, $\pi_w(X_v) =  \{g^n \cdot y_0\}_{n\in \Z}$ is an axis of the loxodromic isometry $g$ of $X_w$. In particular, $g$ is not contained in any product subgroup of $G_w$. Since there are only finitely many isometry types of strips $\Sigma_e$ in $G$, there is a uniform bound on the diameter of the intersections of the cosets of $\langle \partial_e^+(\sigma_e) \rangle$ with cosets of the product subgroups in $G_w$. By induction on the degree of polynomial growth, we deduce that $\hat\iota\circ\Sigma_e^+$ is a uniform quasigeodesic of $(X_w,\dist_{X_w})$. It therefore restricts to a uniform quasigeodesic of $(X_w,\dist_{X_w})$ from $\hat\iota(x)$ to $\hat\iota(y)$. Since $\dist_{X_w}(\hat\iota(x),\hat\iota(y))$ is uniformly bounded, this gives a uniform bound on $\dist_{G_w}(x,y)$. As $G_w$ is undistorted in $G$ \cite[Lemma~4.1]{mutanguha:onpolynomial}, this establishes the claim.\end{claimproof}

We can now prove that $\hat\iota\circ\gamma$ is a uniform quasigeodesic of $X$ in the UPG case. Indeed, by Claim~\ref{claim:backtrack}, there is a uniformly bounded perturbation $\gamma'$ of $\gamma$ such that $\pi\circ\gamma'$ does not backtrack in $T$. For each vertex $v$ appearing in $\pi\circ\gamma$, we know from the discussion after Claim~\ref{claim:stay_quasi} that $\hat\iota\circ\gamma'_J$ is a uniform quasigeodesic of $X$ for each maximal subinterval $J\subset I$ such that $\hat\iota\circ\gamma'_J\subset\bar X_v$. Since each connected component of the complement of each $\bar X_v$ has uniformly bounded coarse intersection with $\bar X_v$, it follows that $\hat\iota\circ\gamma$ is a uniform quasigeodesic.

\medskip

To deal with the case where the monodromy of $G$ is polynomially growing but not unipotent, we can simply perturb $\gamma$ to lie in a finite-index subgroup with unipotent monodromy and apply the above.

Finally, let $G$ be a general finitely generated free-by-cyclic group. Recall from \cref{prop:rel-hyp} that $G$ is hyperbolic relative to a finite set $\{G_1,\dots,G_k\}$ of \{f.g.\ free\}-by-cyclic subgroups with polynomially growing monodromy, and from \cref{sec:general} that $X$ is hyperbolic relative to $\{X_1,\dots,X_k\}$, where $X_i$ is given for $G_i$ by Theorem~\ref{thm:main}. As shown by the above considerations, the restriction of $\hat\iota\circ\gamma$ to each peripheral subgroup is a uniform quasigeodesic, so it follows from the distance formula for relatively hyperbolic groups, \cite[Theorem~3.1]{sisto:projections}, that $\hat\iota\circ\gamma$ is also a uniform quasigeodesic. 
\end{proof}

We remark that the equivalence of \eqref{it:1} and \eqref{it:3} in \cref{lemma:Morse-detectable}, together with the dependence of constants, is exactly the definition of \emph{Morse detectability} for $\mathrm{Cay}(G,S)$ \cite[Definition~4.17]{russellsprianotran:local}. Russell--Spriano--Tran show that every Morse detectable space is \emph{Morse local-to-global} \cite[Theorem~4.18]{russellsprianotran:local}. We therefore have the following corollary:

\begin{corollary}\label{cor:MLTG}
Every f.g.\ free-by-cyclic group is Morse detectable and thus Morse local-to-global.
\end{corollary}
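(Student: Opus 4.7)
The plan is a direct invocation of machinery already built in the paper and in work of Russell--Spriano--Tran. The corollary makes two claims: Morse detectability of the Cayley graph, and the Morse local-to-global property. The first requires only matching definitions, and the second is then an immediate citation.

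For Morse detectability, I would apply \cref{lemma:Morse-detectable} directly. By \cref{thm:main}, $X = \mathrm{Cay}(G; S \cup \mathcal{P})$ is hyperbolic, and the inclusion $\hat\iota \colon \mathrm{Cay}(G,S) \to X$ is 1-Lipschitz. The equivalence of parts (1) and (3) in \cref{lemma:Morse-detectable}, together with the ``moreover'' clause asserting that the Morse gauge of $\gamma$ and the quasigeodesic constants of $\hat\iota \circ \gamma$ can each be chosen to depend only on the other (and on the quasigeodesic constants of $\gamma$), is exactly the data required by \cite[Definition~4.17]{russellsprianotran:local} to witness that $\mathrm{Cay}(G,S)$ is a Morse detectability space.

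Given Morse detectability, the Morse local-to-global property is immediate from \cite[Theorem~4.18]{russellsprianotran:local}. This yields the corollary; independence from the choice of finite generating set for $G$ is covered by \cref{lem:any_generating_set}. There is no substantive obstacle: the proposition \cref{lemma:Morse-detectable} was crafted with this corollary in mind, so the only step requiring care is checking that the quantification of constants in its ``moreover'' clause matches the quantification in the Russell--Spriano--Tran definition, which is mechanical.
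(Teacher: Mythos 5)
Your proposal matches the paper's argument exactly: the remark preceding \cref{cor:MLTG} observes that the equivalence of items \eqref{it:1} and \eqref{it:3} in \cref{lemma:Morse-detectable}, with the quantified dependence of constants, is precisely Definition~4.17 of Russell--Spriano--Tran, and then cites their Theorem~4.18 for the Morse local-to-global conclusion. There is nothing further to add.
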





\subsection{Stable and strongly quasiconvex subgroups}

Let $G$ be a finitely generated group. We say that a subgroup $H \leq G$ is \emph{stable} if it is undistorted and if for every finite generating set $S$ of $G$ there exists a Morse gauge $M \colon \mathbb{R}_{\geq 0} \times \mathbb{R}_{\geq 0} \to \mathbb{R}$ such that every quasigeodesic $\gamma \colon I \to \mathrm{Cay}(G;S)$ with endpoints in $H$ is $M$-Morse.

Suppose that $G$ acts on a hyperbolic space $X$. Following \cite{balasubramanyachesserkerrmangahastrin:non}, we say that $X$ is a \emph{universal recognising space} for $G$ if, for every stable subgroup $H \leq G$, orbit maps $H \to X$ are quasi-isometric embeddings.

We observe that equivariant Morse-detectability spaces are always universal recognising spaces.

\begin{lemma} \label{lem:detectability_universal}
Suppose that an action $G\acts X$ of a group $G$ on a hyperbolic space $X$ witnesses Morse detectability of $G$. A subgroup $H<G$ is stable if and only if its orbit maps on $X$ are quasiisometric embeddings. In particular, $X$ is a universal recognising space for $G$.
\end{lemma}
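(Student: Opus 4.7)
The plan is to prove the two implications of the equivalence; the ``universal recognising space'' claim is immediate from the forward direction applied to all stable subgroups. Throughout, I write $\iota\colon G\to X$ for the orbit map $g\mapsto g\cdot x_0$ at a basepoint, which is uniformly Lipschitz from $\Cay(G,S)$ to $X$ since the $G$-action on $X$ is by isometries and $S$ is finite.

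For the forward direction, assume $H$ is stable. Given $h_1,h_2\in H$ and a geodesic $\gamma$ in $\Cay(G,S)$ between them, stability provides a fixed Morse gauge $M$ such that $\gamma$ is $M$-Morse. The ``Morse implies quasigeodesic in $X$'' direction of Morse detectability then yields uniform constants $(K,C)$ depending only on $M$ such that $\iota\circ\gamma$ is a $(K,C)$-quasigeodesic of $X$. Evaluating at the endpoints gives $d_X(\iota(h_1),\iota(h_2))\ge\tfrac1K d_S(h_1,h_2)-C$; chaining this with undistortedness of $H$ (so that $d_S$ is comparable to $d_H$ on $H$) and with the uniform Lipschitz upper bound for $\iota$ shows that the orbit map restricted to $H$ is a quasiisometric embedding.

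For the converse, assume the orbit map $H\to X$ is a QI embedding. Undistortedness of $H$ follows immediately from combining the QI inequality $d_X(\iota(h_1),\iota(h_2))\ge\tfrac1{K'}d_H(h_1,h_2)-C'$ with the Lipschitz bound $d_X(\iota(g_1),\iota(g_2))\le K_0 d_S(g_1,g_2)$. For a uniform Morse gauge, fix a finite generating set of $H$, and for each $h_1,h_2\in H$ let $\rho$ be a geodesic in $\Cay(H)$ viewed as a path in $\Cay(G,S)$. Since $H$ is undistorted, $\rho$ is a uniform quasigeodesic of $\Cay(G,S)$, and since the orbit map is a QI embedding, $\iota\circ\rho$ is a uniform quasigeodesic of $X$. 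The ``quasigeodesic in $X$ implies Morse'' direction of Morse detectability then yields a fixed Morse gauge $M_0$ such that every such $\rho$ is $M_0$-Morse in $\Cay(G,S)$. Given any other $(K,C)$-quasigeodesic $\gamma$ with the same endpoints, Morseness of $\rho$ forces $\gamma\subseteq N_{M_0(K,C)}(\rho)$; the reverse containment $\rho\subseteq N_D(\gamma)$ is obtained from coarse Lipschitzness of nearest-point projection to the Morse quasigeodesic $\rho$, which forces the projection of $\gamma$ onto $\rho$ to be coarsely surjective. Morseness of $\rho$ then transfers to $\gamma$ by a perturbation argument: prepending and appending short geodesics to any test quasigeodesic $\eta$ with endpoints on $\gamma$ produces a uniform quasigeodesic with endpoints on $\rho$, to which Morseness of $\rho$ applies, and the Hausdorff bound places $\eta$ near $\gamma$.

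The main technical obstacle is this last transfer step, which amounts to the standard fact that two quasigeodesics with the same endpoints, one of which is Morse, are at uniformly bounded Hausdorff distance, with the resulting gauge of the second depending only on the gauge of the first and its own quasigeodesic constants. I would quote this from the Morse boundary literature rather than reprove it in situ.
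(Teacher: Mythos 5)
Your proof is correct and follows the same approach as the paper's: use undistortedness plus the Morse-detectability equivalence on $\Cay(H)$-geodesics, and then transfer uniform Morseness from those to arbitrary quasigeodesics with endpoints in $H$. The paper's proof is terser and leaves the transfer step implicit, whereas you spell it out and correctly flag it as a standard fact to be cited.
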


\begin{proof}
If $H<G$ is stable, then since $G\acts X$ witnesses Morse detectability, every geodesic in $H$ projects to a uniform quasigeodesic of $X$, so orbit maps of $H$ on $X$ are quasiisometric embeddings. Conversely, if orbit maps of $H$ are quasiisometric embeddings, then since the map $\hat\iota \colon G \to X$ is Lipschitz, it easy to see that $H$ is undistorted in $G$. Moreover, every geodesic of $H$ maps to a quasigeodesic of $X$, and hence is Morse by the fact that $X$ witnesses Morse detectability.
\end{proof}

In particular, if $G$ is a finitely generated free-by-cyclic group and $X$ is as in Theorem~\ref{thm:main}, then $X$ is a universal recognising space for $G$. More precisely, we have the following.

\begin{corollary}\label{prop:stable-subgroups}
    Let $G$ be a f.g.\ free-by-cyclic group and $X$ be as in \cref{thm:main}. Let $H \leq G$ be a subgroup.  The following are equivalent.
    \begin{enumerate}
        \item $H \leq G$ is stable.
        \item $H$ intersects each subgroup in $\mathcal{P}$ trivially.
        \item The orbit map $j \colon H \to X$ is a quasiisometric embedding.
    \end{enumerate}
\end{corollary}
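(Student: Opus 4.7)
The equivalence of $(1)$ and $(3)$ follows from \cref{lem:detectability_universal} and \cref{lemma:Morse-detectable}: the action $G\acts X$ witnesses Morse detectability of $G$, so $H$ is stable if and only if its orbit map to $X$ is a quasiisometric embedding. This proves $(1)\Leftrightarrow(3)$, so it remains to show that either of these is equivalent to $(2)$.

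The directions $(1)\Rightarrow(2)$ and $(3)\Rightarrow(2)$ are both short. In either case, suppose $e\ne h\in H\cap P$ for some maximal product subgroup $P$; as $G$ is torsion-free, $h$ has infinite order. For $(3)\Rightarrow(2)$: $P$ fixes a cone vertex of $X$ and so has bounded orbits, hence $h$ acts elliptically on $X$, contradicting the linear growth of $n\mapsto d_X(e,h^n)$ forced by a QI embedding. For $(1)\Rightarrow(2)$: the cyclic group $\langle h\rangle$ is undistorted in $P$ (a product of two groups each having undistorted cyclic subgroups), and $P$ is undistorted in $G$ by \cite[Lemma~4.1]{mutanguha:onpolynomial}, so long initial segments of the $\langle h\rangle$-orbit give a family of quasi-geodesics of $G$, with uniform quasi-geodesic constants, whose endpoints lie in $H$ and whose intersection with the coset $P$ is unbounded; by \cref{lemma:Morse-detectable} they cannot be uniformly Morse, contradicting stability.

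The main content is $(2)\Rightarrow(3)$, which the plan is to prove by induction on the polynomial growth degree of the monodromy in the UPG case, bootstrapping to the general case via the relatively hyperbolic structure of \cref{prop:rel-hyp}. In the UPG case, let $T$ be the top-level Bass--Serre tree and $\{X_v=X(G_v)\}$ the vertex quasi-trees assembling $X$ as in \cref{sh:XG}. Every maximal product subgroup of a vertex group $G_v$ is contained in a maximal product subgroup of $G$, so condition $(2)$ passes to $H_v:=H\cap G_v\le G_v$. Inductively each orbit map $H_v\to X_v$ is a QI embedding, and one then combines these using acylindricity of $G\acts T$ from \cref{sec:splittings} and the bounded-intersection estimates of \cref{lemma:general-axes-intersections,lemma:loxodromic-elliptic-axes-intersction} to deduce QI-embeddedness of $H\to X$. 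In the general case, \cref{prop:rel-hyp} presents $G$ as hyperbolic relative to subgroups $G_1,\dots,G_k$ with PG monodromy, condition $(2)$ descends to $H\cap G_i$, and Sisto's distance formula for relatively hyperbolic groups \cite{sisto:projections} assembles the inductive QI-embedded orbits $H\cap G_i\to X_i$ into a QI embedding $H\to X$. The main obstacle is the inductive combination step in the UPG case: condition $(2)$ must be leveraged to rule out $H$-orbits making arbitrarily long $G$-distance excursions inside a single vertex space of $T$, which is exactly what is needed to glue the inductive QI estimates across edges of $T$.
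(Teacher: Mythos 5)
Your $(1)\Leftrightarrow(3)$ matches the paper exactly (both go through \cref{lem:detectability_universal}), and your arguments for $(1)\Rightarrow(2)$ and $(3)\Rightarrow(2)$ are correct and close in spirit to the paper's (which proves $(1)\Rightarrow(2)$ from the fact that stable subgroups meet undistorted subgroups such as the elements of $\mathcal P$ in finite, hence trivial, subgroups). The problem is the remaining implication: you designate $(2)\Rightarrow(3)$ as ``the main content'' and then only sketch a plan for it --- an induction on the polynomial growth degree, re-running the vertex-space decomposition of Section~\ref{sh:XG}, the acylindricity of the tree action, and the bounded-intersection estimates --- ending with ``the main obstacle is the inductive combination step.'' This is not a proof, and it is also redundant: the plan you describe is essentially the proof of \cref{lemma:Morse-detectable}, which the paper has already established and then invokes as a black box. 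The step you are missing is that once \cref{lemma:Morse-detectable} is in hand, no new induction is needed.

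The paper closes the loop instead by showing $\neg(1)\Rightarrow\neg(2)$ via \cref{lemma:Morse-detectable}: if $H$ is not stable, then for each Morse gauge $M_n$ there is a quasigeodesic $\gamma_n$ (with uniform constants and endpoints in $H$) that is not $M_n$-Morse, and by the equivalence $\eqref{it:1}\Leftrightarrow\eqref{it:2}$ of \cref{lemma:Morse-detectable} these $\gamma_n$ must have unboundedly large intersections with cosets of elements of $\mathcal P$, forcing a nontrivial intersection of $H$ with some $P\in\mathcal P$. This replaces your entire proposed induction with two sentences. A further concern with your inductive sketch: the subgroups $H_v=H\cap G_v$ need not be finitely generated, so the inductive hypothesis (a statement about f.g.\ subgroups) may not apply to them as written; this is another reason to avoid the re-derivation and use \cref{lemma:Morse-detectable} directly.
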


\begin{proof}
Lemma~\ref{lem:detectability_universal} gives the equivalence between the first and third items. If $H$ is stable then it must have finite, hence trivial, intersection with each product subgroup of $G$, as such subgroups are undistorted \cite[Lemma~4.1]{mutanguha:onpolynomial}. If $H$ is not stable then for each Morse gauge $M_n:(K,C)\mapsto(K+C+2)^n$ it contains a uniform quasigeodesic $\gamma_n$ that is not $M_n$--Morse. By Proposition~\ref{lemma:Morse-detectable}, the quasigeodesics $\gamma_n$ must have increasingly large (hence eventually nontrivial) intersections with cosets of elements of $\cal P$.
\end{proof}

A subgroup $H$ of a finitely generated group $G$ is said to be \emph{strongly quasiconvex} (or \emph{Morse}) if there exists some $M = M(S) >0$ such that every geodesic in $\mathrm{Cay}(G;S)$ joining points of $H$ is contained in the $M$-neighbourhood of $H$.

\begin{proposition}\label{lem:Morse-subgp-PG}
Let $G$ be a \{f.g.\ free\}-by-cyclic group with PG monodromy and let $\mathcal{P}$ be the collection of maximal product subgroups. A subgroup $H \leq G$ is strongly quasiconvex if and only if $H$ has finite index in $G$ or $H$ intersects each subgroup in $\mathcal{P}$ trivially.
\end{proposition}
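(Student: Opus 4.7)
The plan is to prove each direction of the biconditional.

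For the $(\Leftarrow)$ direction, I handle the two cases of the disjunction. If $[G:H]<\infty$, then every geodesic of $\Cay(G,S)$ lies within $\mathrm{diam}(G/H)$ of $H$, so $H$ is strongly quasiconvex. If $H\cap P=\{1\}$ for every $P\in\cal P$, then $H$ is stable by \cref{prop:stable-subgroups}, and stable subgroups are always strongly quasiconvex: given $h_1,h_2\in H$, undistortion of $H$ yields a uniform quasigeodesic $\alpha$ in $H$ joining them, and by stability of $H$ any $G$-geodesic $\gamma$ from $h_1$ to $h_2$ is $M$-Morse (for uniform $M$), so $\gamma$ lies in a uniform neighbourhood of $\alpha\subseteq H$.

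For the $(\Rightarrow)$ direction, suppose $H$ is strongly quasiconvex with infinite index in $G$; I will show by contradiction that $H\cap P=\{1\}$ for every $P\in\cal P$. Assume there exists $g\in(H\cap P)\setminus\{1\}$ for some $P\in\cal P$. The key input is a centraliser result of Tran (see \cite{tran:onstrongly}): for any strongly quasiconvex subgroup $H$ of $G$ and any $g\in H$, the intersection $C_G(g)\cap H$ has finite index in $C_G(g)$. Since $g$ lies in the non-trivial product $P\cong A\times B$, the centraliser $C_G(g)$ is non-cyclic: it contains the rank-two abelian subgroup $C_A(a)\times C_B(b)$ when $g=(a,b)$, and even more when $a=1$ or $b=1$. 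Hence $H$ virtually contains a copy of $\Z^2$.

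The next step is to bootstrap this virtual containment using the graph-of-groups structure from \cref{sec:splittings} --- specifically \cref{prop:linear-splitting} in the linear case and the cyclic hierarchy for higher degrees. The key point is that edge groups in the linear splitting coincide with $V_1$-vertex groups and embed as maximal $\Z^2$ subgroups of $V_0$-vertex groups, so iteratively applying Tran's centraliser result to well-chosen elements of the currently-contained subgroup allows us to virtually contain each adjacent vertex group in turn. Cocompactness of the action on the Bass--Serre tree then implies that $H$ virtually contains every vertex group, hence has finite index in $G$, contradicting our hypothesis. The main obstacle will be making the propagation step rigorous: at each stage one must exhibit an element of the virtually-contained subgroup whose centraliser in $G$ genuinely extends beyond the current vertex group --- this uses that the central $\Z$-direction of each non-abelian product vertex group is identified across edges with the central $\Z$-direction of the adjacent $\Z^2$-vertex groups (and further $V_0$-vertex groups via their edge inclusions), so that centralisers of central elements span large connected subtrees of the Bass--Serre tree.
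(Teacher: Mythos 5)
Your $(\Leftarrow)$ direction is correct and matches the paper's. For $(\Rightarrow)$, you take a genuinely different route from the paper: the paper's proof (following Tran's Proposition~8.18) passes to a finite-index UPG subgroup, then to a linearly-growing sub-mapping-torus $G_0$ via undistortedness, and proves the claim that $|H_0^g \cap P| = \infty$ for every $g \in G_0$ by walking along geodesics in the Bass--Serre tree of the linear splitting, using that consecutive vertex stabilisers overlap and that an infinite strongly quasiconvex subgroup of an undistorted product must have finite index there (Tran's Proposition~4.11 plus his characterisation of strongly quasiconvex subgroups of products). This directly contradicts finite height (Tran's Theorem~4.15), without ever needing to show $H$ is all of $G$.

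Your centraliser input is not stated verbatim in Tran's paper, but it is a valid consequence of finite height: if $k_0,\dots,k_n \in C_G(g)$ lie in pairwise-distinct cosets of $H$, then $\bigcap_i k_iHk_i^{-1} \supseteq \langle g\rangle$ is infinite, so the number of such cosets is bounded by the height of $H$. You should derive it this way rather than cite it as given. Your propagation step, however, is where the real gaps lie. First, you never do the reductions to a UPG representative and then to a linearly-growing $G_0$ that the paper performs; these are needed to even have the bipartite linear splitting (\cref{prop:linear-splitting}) available with its clean structure, since for higher polynomial degree the topmost-edge splitting has cyclic rather than $\Z^2$ edge groups. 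Second, the mechanism you describe --- \enquote{the central $\Z$-direction of each non-abelian product vertex group is identified across edges} --- is not quite what happens: the centraliser of the central element $t_u$ of a $V_0$-vertex group $G_u = F_u \times \langle t_u\rangle$ is exactly $G_u$, so that element does not propagate. What actually makes the propagation work is that the $\Z^2$-subgroup $G_v = G_u \cap G_{u'}$ contains a power of the \emph{other} vertex group's central element $t_{u'}$ (because any $\Z^2 \leq F_{u'} \times \langle t_{u'}\rangle$ must, up to change of basis, be of the form $\langle z'\rangle \times \langle t_{u'}^m\rangle$), and $C_G(t_{u'}^{mN}) \supseteq G_{u'}$. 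You need to make this explicit. Third, even granting that $H$ virtually contains every vertex stabiliser of $T$, the conclusion that $H$ has finite index is a jump: the clean way to finish is again finite height, taking $n_0{+}1$ distinct cosets and noting their conjugate intersection virtually contains a vertex group, hence is infinite --- which is essentially what the paper does directly without routing through \enquote{$H$ has finite index} at all.
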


\begin{proof}
If $H$ intersects each element of $\mathcal{P}$ trivially then it is stable by \cref{prop:stable-subgroups} and thus strongly quasiconvex. Moreover, every finite-index subgroup is strongly quasiconvex.

Conversely, suppose that $H \leq G$ has infinite index and that there exists some $Q \in \mathcal{P}$ such that $|Q \cap H| = \infty$. Let $G' \leq_{f.i.} G$ be a subgroup of finite index with UPG monodromy and let $H' = H \cap G'$.  Then, there exists a maximal product subgroup $P \leq G'$ such that $|P \cap H| = \infty$. We will show that $H'$ is not a strongly quasiconvex subgroup of $G'$, from which it will follow that $H$ is not strongly quasiconvex in $G$.
    
Since $P$ is a maximal product subgroup of a \{f.g.\ free\}-by-cyclic group with UPG monodromy, there exist a subgroup $G_0 \leq G'$ such that $G_0$ is \{f.g.\ free\}-by-cyclic with unipotent and linearly growing monodromy and $P \leq G_0$ (see the last paragraph of the proof of \cref{lem:cayley}). Let $H_0 := H' \cap G_0$. Since $G_0 \leq G'$ is undistorted by \cite[Lemma~4.1]{mutanguha:onpolynomial}, if we can show that $H_0 \leq G_0$ is not strongly quasiconvex, then it will follow that $H' \leq G'$ is not strongly quasiconvex, by \cite[Proposition 4.11]{tran:onstrongly}. 

Now, we argue similarly to the proof of \cite[Proposition~8.18]{tran:onstrongly}. Assume for contradiction that $H_0 \leq G_0$ is strongly quasiconvex. Since $P \leq G_0$ and $|H \cap P| = \infty$, it follows that $|H_0 \cap P| = \infty$. We claim that for every $g \in G_0$, $|H_0^g \cap P| = \infty$. 
    
Assume first that the claim is true. Then, since $H_0^g \leq G_0$ is also strongly quasiconvex and $P \leq G_0$ is an undistorted product subgroup, it must be the case that $H_0^g \cap P$ is a finite-index subgroup of $P$. Hence, for any finite subset $A \subset G_0$, we have that $\bigcap_{g\in A} H_0^g$ is infinite. However, since $H_0 \leq G_0$ is of infinite index, this directly contradicts \cite[Theorem~4.15]{tran:onstrongly}, which states that $H_0 \leq G_0$ has finite \emph{height}. Thus, $H_0 \leq G_0$ is not strongly quasiconvex.

It remains to prove the claim, assuming that $H_0 \leq G_0$ is strongly quasiconvex and $|H_0 \cap P| = \infty$. Fix $g \in G_0$ and let $T$ be the Bass--Serre tree of the splitting of $G_0$ as in \cref{prop:linear-splitting}. Then $P$ is elliptic for the action on $T$, and by maximality, thus there exists some vertex $v$ in $T$ such that $P = \mathrm{stab}_{G_0}(v)$. Let $w = g^{-1} \cdot v$ and let $v = v_0, \ldots, v_k = w$ be the vertices on the geodesic path joining $v$ to $w$ in $T$. Let $P_i = \mathrm{stab}_{G_0}(v_i)$. Each $P_i$ is a product subgroup of $G$ and $|P_i \cap P_{i+1}| = \infty$ for each $i$. It follows that if $|H_0 \cap P_i|$ is infinite then $H_0 \cap P_i$ has finite index in $P_i$. Proceeding inductively, we deduce that $H_0 \cap P_k$ has finite index in $P_k$, and thus $|H_0 \cap P^{g^{-1}}| = \infty$. Hence, $|H_0^g \cap P| = \infty$ as claimed.
\end{proof}

Combining \cref{lem:Morse-subgp-PG} with \cref{prop:rel-hyp} and \cite[Theorem~6.7]{tran:onstrongly}, we obtain the following characterisation of which undistorted subgroups of finitely generated free-by-cyclic groups are strongly quasiconvex. 

\begin{corollary}\label{cor:general-Morse-subgp}
Let $G$ be a finitely generated free-by-cyclic group and let $\mathcal{H}$ be the collection of maximal \{f.g.\ free\}-by-cyclic subgroups with PG monodromy. An undistorted subgroup $K \leq G$ is strongly quasiconvex if and only if for each $H\in\cal H$, either $K \cap H$ has finite index in $H$ or $K$ intersects every product subgroup of $H$ trivially.
\end{corollary}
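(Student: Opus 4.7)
The plan is to combine the relatively hyperbolic structure furnished by \cref{prop:rel-hyp} with Tran's characterisation of strongly quasiconvex subgroups of relatively hyperbolic groups \cite[Theorem~6.7]{tran:onstrongly}, and to reduce the resulting peripheral conditions to \cref{lem:Morse-subgp-PG}.

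First I would regard $G$ as hyperbolic relative to $\mathcal{H}$. Tran's theorem then asserts that an undistorted subgroup $K \leq G$ is strongly quasiconvex if and only if $K$ is relatively quasiconvex and, for every $H \in \mathcal{H}$, the intersection $K \cap H$ is either finite or strongly quasiconvex in $H$. Since $G$ is torsion-free, the ``finite'' alternative reduces to $K \cap H = 1$. Because the class $\mathcal{H}$ is closed under $G$-conjugation (the corollary's hypothesis records $\mathcal{H}$ as the collection of \emph{all} maximal \{f.g.\ free\}-by-cyclic subgroups with PG monodromy, not just orbit representatives), quantifying over $H \in \mathcal{H}$ captures every $G$-conjugate of a peripheral automatically, so it suffices to check the peripheral condition for a single representative of each conjugacy class.

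Next I would apply \cref{lem:Morse-subgp-PG} inside each peripheral: the subgroup $K \cap H$ is strongly quasiconvex in $H$ exactly when $K \cap H$ has finite index in $H$ or $K \cap H$ intersects every maximal product subgroup of $H$ trivially. The second alternative is equivalent to the statement ``$K$ intersects every product subgroup of $H$ trivially'', because every product subgroup of $H$ is a subgroup of $H$ itself. The trivial-intersection case $K \cap H = 1$ from the previous step is absorbed into this second alternative, so the two options listed in the corollary exhaust all the possibilities produced by Tran's theorem, and the characterisation follows.

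The step I expect to require the most care is verifying the ``relatively quasiconvex'' hypothesis of Tran's theorem from the assumption that $K$ is undistorted together with the structural conditions on the peripheral intersections, although this is a standard manoeuvre in the relatively hyperbolic setting and is precisely the point of Tran's formulation of \cite[Theorem~6.7]{tran:onstrongly}. Once the applicability of Tran's theorem is justified, the corollary is a direct translation between the ``peripheral intersection is strongly quasiconvex'' condition and the explicit alternative provided by \cref{lem:Morse-subgp-PG}.
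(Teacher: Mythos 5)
Your proposal follows the paper's own one-line proof exactly: regard $G$ as hyperbolic relative to $\mathcal{H}$ via \cref{prop:rel-hyp}, invoke Tran \cite[Theorem~6.7]{tran:onstrongly} to reduce strong quasiconvexity of $K$ to strong quasiconvexity of the peripheral intersections $K\cap H$, and then translate the latter via \cref{lem:Morse-subgp-PG}. Your translation of the peripheral alternatives (absorbing $K\cap H=1$ into the trivial-intersection case, and noting that product subgroups of $H$ are subgroups of $H$ so $K\cap P=(K\cap H)\cap P$) is correct, and your explicit flagging of the relative-quasiconvexity hypothesis in Tran's theorem as the point requiring care is reasonable, though the paper itself does not elaborate on it either.
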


\begin{remark}
Note that there is a conjectural characterisation of quasiconvex subgroups of hyperbolic \{f.g.\ free\}-by-cyclic groups (see e.g.\ \cite[Conjecture~1.1]{aimpl:rigidity}, \cite[Problem~1.5]{abdenbiwise:negative}, \cite[Conjecture~1.3]{linton:geometry}), however our methods do not seem to apply here.
\end{remark}

\subsection{Morse boundary}
        
The \emph{Morse boundary} $\partial_M X$ of a proper geodesic space $X$ is the set of equivalence classes of Morse rays in $X$, where two rays $\gamma$ and $\eta$ are said to be equivalent if they lie at bounded Hausdorff distance (see \cite{cordes:morse}). The Morse boundary of a finitely generated group $G$ is defined to be the Morse boundary of $\mathrm{Cay}(G;S)$ for any finite generating set $S$ of $G$. 

A topological space $X$ is said to be \emph{$\sigma$-compact} if it is the direct limit of countably many compact spaces.  It is an \emph{$\omega$-Cantor space} if it is the direct limit of countably many Cantor spaces $X_1 \subset X_2 \subset X_3 \subset \ldots$ such that the interior of $X_i$ in $X_{i+1}$ is empty.

Charney--Cordes--Sisto showed that for any finitely generated non--virtually-free group $G$, if $\partial_M G$ is totally disconnected, $\sigma$-compact, and contains a Cantor subspace, then it is an $\omega$-Cantor space \cite[Theorem~1.4]{charneycordessisto:complete}. They used this to prove that Morse boundaries of fundamental groups of graph manifolds are $\omega$-Cantor sets. Their argument can be used to prove the following:

\begin{theorem}
Let $G$ be a \{f.g.\ free\}-by-cyclic group with PG monodromy. The Morse boundary $\partial_M G$ is an $\omega$-Cantor set.
\end{theorem}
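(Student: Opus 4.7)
Plan: The strategy is to verify the hypotheses of the Charney--Cordes--Sisto criterion (\cite[Theorem~1.4]{charneycordessisto:complete}) cited just above: given that $G$ is not virtually free and that $\partial_M G$ is totally disconnected, $\sigma$-compact, and contains a Cantor subspace, that theorem concludes $\partial_M G$ is an $\omega$-Cantor space. First reduce to the case of positive growth degree: if the degree is zero then $G$ is virtually $F\times\Z$, and a direct bypass argument (go up in the $\Z$ factor and back) shows no ray in such a product with nontrivial $F$ is Morse, so $\partial_M G=\varnothing$; the case $F=1$ makes $G$ virtually cyclic and is degenerate. With positive growth, some power of $\phi$ fixes a nontrivial element $g\in F$, so $\langle g,t^n\rangle\cong\Z^2\le G$ prevents $G$ from being virtually free.

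For total disconnectedness, \cref{thm:main} yields that $X=\mathrm{Cay}(G;S\cup\cal P)$ is a quasitree, so $\partial X$ is totally disconnected. By the quantitative form of \cref{lemma:Morse-detectable}, the $X$--projection of a Morse ray in $G$ is a uniform quasigeodesic ray of $X$, giving a continuous injection $\partial_M G\hookrightarrow\partial X$: continuity comes from Morse gauges controlling the $X$--quasigeodesic constants, and injectivity from the converse implication, since two Morse rays that are inequivalent in $G$ must have $X$--projections whose endpoints in $\partial X$ differ (otherwise they would fellow-travel in $X$, and pulling back via \cref{lemma:Morse-detectable} would force them to fellow-travel in $G$). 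Total disconnectedness of $\partial_M G$ follows. The required $\sigma$--compactness is the standard decomposition $\partial_M G=\bigcup_N\partial_M^N G$ into compact strata of rays with gauge bounded by $N$, which holds for the Morse boundary of any proper geodesic space.

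For the Cantor subspace, by \cref{prop:main} and \cref{thm:main} the action $G\acts X$ is acylindrical and, under the positive-growth assumption, non-elementary. A standard ping-pong argument applied to two independent loxodromic elements produces a free subgroup $H=\langle g^N,h^N\rangle\cong F_2$ whose orbit map to $X$ is a quasiisometric embedding; by \cref{prop:stable-subgroups} this makes $H$ stable in $G$. Stability yields a topological embedding $\partial_M H\hookrightarrow\partial_M G$ via the results of \cite{cordes:morse}, and since $\partial_M F_2$ coincides with the usual Cantor-set Gromov boundary of a nonabelian free group, this exhibits the required Cantor subspace. All four hypotheses of the CCS criterion are now in place, so we conclude.

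The main technical point is securing the continuous injection $\partial_M G\hookrightarrow\partial X$ in the total-disconnectedness step: this is precisely where the effective two-sided form of \cref{lemma:Morse-detectable} (Morse gauges and $X$--quasigeodesic constants determining each other) is needed, since two Morse rays converging to the same point of $\partial X$ would otherwise only be controlled in the $X$--metric, not in $G$. Everything else reduces cleanly to earlier results of the paper together with standard facts about acylindrically hyperbolic groups and stable subgroups.
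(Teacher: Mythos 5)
Your approach tracks the paper's quite closely: both invoke the Charney--Cordes--Sisto criterion, establish total disconnectedness via an embedding of $\partial_M G$ into the boundary of the quasitree $X$ using the Morse-detectability statement (\cref{lemma:Morse-detectable}), and produce a Cantor subspace from a stable free subgroup obtained by ping-pong. You are in fact somewhat more careful than the paper in two respects: you verify the ``not virtually free'' hypothesis of the CCS criterion explicitly, and you flag the degenerate zero-growth case (where indeed $\partial_M G$ is empty or two points, and the conclusion must be read with an implicit positivity assumption).

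However, there is a genuine gap in your treatment of $\sigma$-compactness. You assert that the decomposition $\partial_M G=\bigcup_N\partial_M^N G$ into compact strata ``holds for the Morse boundary of any proper geodesic space'' and suffices. This is false as a route to $\sigma$-compactness: for a proper geodesic space the Morse boundary is a direct limit of compact strata indexed by \emph{all} Morse gauges, which is an uncountable directed set, and it is a nontrivial fact (failing in general) that a countable cofinal subfamily of gauges exhausts the boundary. There exist finitely generated groups whose Morse boundary is not $\sigma$-compact, which is precisely why this hypothesis appears in the Charney--Cordes--Sisto theorem and why it is worth proving here. The correct argument is the paper's: $G$ is Morse local-to-global by \cref{cor:MLTG}, and He--Spriano--Zbinden show that MLTG groups have $\sigma$-compact Morse boundary. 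Replace your appeal to ``standard decomposition'' with that citation and the rest of your argument goes through.
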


\begin{proof}
Since $G$ is Morse local-to-global by \cref{cor:MLTG}, it follows by \cite[Theorem~A]{hesprianozbinden:sigma} that $G$ has $\sigma$-compact Morse boundary. Moreover, we may use a ping-pong type argument to construct a quasiisometrically embedded free subgroup $H$ of $G$ which does not intersect any product subgroup nontrivially and thus is stable by \cref{prop:stable-subgroups}. Hence, there is an embedding $\partial H \cong \partial_M H \to \partial_M G$, where $\partial H$ is the Gromov boundary of $H$. This shows that $\partial_M G$ contains a Cantor subspace.

Finally, by \cref{lemma:Morse-detectable}, there is a topological embedding $\partial _M G \to \partial_M X$ where $X = \mathrm{Cay}(G; S \cup \mathcal{P})$. Since $X$ is a quasitree, it follows that $\partial_M X$ and thus $\partial_M G$ is totally disconnected. The result is now given by \cite[Theorem~1.4]{charneycordessisto:complete}.
\end{proof}

\bibliographystyle{alpha}
{\footnotesize
\bibliography{bibtex}}
\end{document}